\newcommand{\R}{\mathbb R}
\newcommand{\Q}{\mathbb Q}
\newcommand{\C}{\mathbb C}
\newcommand{\Z}{\mathbb Z}
\newcommand{\bfk}{\mathbf k}
\renewcommand{\phi}{\varphi}
\newcommand{\eps}{\varepsilon}
\newcommand{\frakp}{\mathfrak p}
\newcommand{\frakP}{\mathfrak P}
\newcommand{\fraku}{\mathfrak u}
\newcommand{\fraka}{\mathfrak a}
\newcommand{\frakb}{\mathfrak b}
\newcommand{\frakc}{\mathfrak c}
\newcommand{\frakd}{\mathfrak d}
\newcommand{\frakn}{\mathfrak n}
\newcommand{\frakN}{\mathfrak N}
\newcommand{\frakM}{\mathfrak M}
\newcommand{\frakD}{\mathfrak D}
\newcommand{\frako}{\mathfrak o}
\newcommand{\calO}{\mathcal O}
\newcommand{\calI}{\mathcal I}
\newcommand{\calU}{\mathcal U}
\newcommand{\A}{\mathbb A}
\newcommand{\bs}{\backslash}
\newcommand{\bmx}{\left( \begin{matrix}}
\newcommand{\emx}{\end{matrix} \right)}
\newcommand{\Cl}{\mathrm{Cl}}
\newcommand{\JL}{\mathrm{JL}}
\newcommand{\St}{\mathrm{St}}
\newcommand{\sgn}{\mathrm{sgn}}
\newcommand{\new}{\mathrm{new}}
\newcommand{\old}{\mathrm{old}}
\renewcommand{\mod}{\, \, \mathrm{mod} \, \,}
\newcommand{\one}{\mathbf{1}}
\newcommand{\two}{\mathbf{2}}
\newcommand{\zero}{\mathbf{0}}
\DeclareMathOperator{\GL}{GL}
\DeclareMathOperator{\tr}{tr} 
\DeclareMathOperator{\Ind}{Ind} 
\DeclareMathOperator{\Sym}{Sym} 
\newtheorem{lemma}{Lemma}
\numberwithin{lemma}{section}
\newtheorem{prop}[lemma]{Proposition}
\newtheorem{thm}[lemma]{Theorem}
\newtheorem{cor}[lemma]{Corollary}
\theoremstyle{remark}
\newtheorem{rem}{Remark}
\numberwithin{rem}{section}
\numberwithin{equation}{section}
\begin{document}

\title{The basis problem revisited}
\author{Kimball Martin}
\address{Department of Mathematics, University of Oklahoma, Norman, OK 73019}
\email{kimball.martin@ou.edu}

\subjclass[2010]{11F27, 11F41, 11F70}

\date{\today}

\begin{abstract}
Eichler investigated when there is a basis of a space of modular forms
consisting of theta series attached to quaternion algebras, and treated
squarefree level.  Hijikata, Pizer and Shemanske
completed the solution to Eichler's basis problem for
elliptic modular forms of arbitrary level by tour-de-force trace calculations.
We revisit the basis problem using the representation-theoretic perspective
of the Jacquet--Langlands correspondence.  

Our results include:
(i) a simpler proof of the solution to the basis problem for elliptic modular forms, which also
allows for more flexibility in the choice of quaternion algebra;
(ii) a solution to the basis problem for Hilbert modular forms; (iii) a theory of
(local and global) new and old forms for quaternion algebras; and (iv)
an explicit version of the Jacquet--Langlands correspondence at the level of
modular forms, which is a  refinement of the Hijikata--Pizer--Shemanske
solution to the basis problem.  Both (i) and (ii) have practical applications to computing elliptic and
Hilbert modular forms.  Moreover, (iii) and (iv) are desired for arithmetic applications---to
illustrate, we give a simple application to Eisenstein congruences in level $p^3$.
\end{abstract}

\maketitle
\tableofcontents

%
%

\section{Introduction}

Let $S_k(N)$ denote the space of elliptic cusp
forms of weight $k$ on $\Gamma_0(N)$ with trivial character.  
Denote by $B$ a definite quaternion algebra over $\Q$ (for the moment),
which is characterized by its discriminant $D_B$, a positive 
squarefree product of an odd number of primes (the finite places of ramification of
$B$).  Denote by $\calO$ an order in $B$.

\subsection{The basis problem: a classical history}

In the case $D_B = p$ is prime and $\calO$ is a maximal order of $B$, 
Hecke (1940) conjectured that the (differences of) theta series associated to 
a set of one-sided $\calO$-ideal class representatives yield a basis for $S_2(p)$.  While this conjecture was not quite
correct (the number of theta series here equals $1+\dim S_2(p)$,
but they often are linearly dependent), 
Eichler \cite{eichler:1955}
showed that there is a basis for $S_2(p)$ coming from a larger
collection of theta series associated
to $\calO$.  These theta series come from forming
Fourier series $\sum a_{ij}(n) q^n$, where $a_{ij}(n)$ is the $(i,j)$-th entry
of the $n$-th Brandt matrix $A_n$ associated to $\calO$.  Eichler's proof
relies on a comparison of traces of Brandt matrices with traces of Hecke 
operators on $S_2(p)$ via explicit computation.

More generally, Eichler considered the question of whether the newspace 
$S^\new_k(N)$ of $S_k(N)$ has a basis consisting of theta series attached to an 
order $\calO$ in a definite quaternion algebra, which is called the (Eichler) basis
problem.  Eichler \cite{eichler} extended his approach from \cite{eichler:1955}
to show that this question has a positive answer when 
$N > 1$ is squarefree using what are now known as Eichler orders $\calO$ (intersections of at most 2 maximal orders).
Note one cannot construct elements of $S_k(1)$ via quaternionic theta
series, but there are well-known ways for constructing a basis for $S_k(1)$,
e.g., with the Eisenstein series $E_4$ and $E_6$.  Hence \cite{eichler}
together with Atkin--Lehner theory provides a way of constructing bases for 
$S_k(N)$ for any squarefree $N$.

Using more general Eichler orders, Hijikata and Saito \cite{hijikata-saito} 
extended Eichler's results to levels of the form $N=pM$.  Here $M$ denotes 
a positive integer prime to $p$.
This was further generalized by Pizer in \cite{pizer:algorithm} and \cite{pizer:p2} 
where he treated levels of the form $N=p^{2r+1}M$ and $N=p^2 M$.  However,
in the case $N=p^2M$, Pizer needs to assume $p$ is odd, and now it is only true
that $S^\new_k(N)$ is generated by quaternionic theta series and twists of
newforms of levels $M$ and $pM$ (with character).
Pizer followed a similar approach to Eichler's, but needed to work with
non-Eichler orders.  (An Eichler order is maximal at primes
dividing $D_B=p$, so there are no Eichler orders of level $p^r M$ for $r > 1$.)

The only levels that remain are those of the form $N=(N')^2$ where no odd prime
sharply divides $N'$.  To treat these, Hijikata, Pizer and Shemanske 
\cite{HPS:crelle} introduced a generalization of Eichler's and Pizer's orders, called
special orders.  Then in a tour-de-force calculation \cite{HPS:mams}, they
solved the basis problem using special orders of level $N=p^r M$ in a
definite quaternion algebra of discriminant $p$.  The basic argument follows
Eichler's original approach, but the necessary calculations with special orders
are much more complicated (especially for $N=2^{2r}M$), and again
one needs to consider twists of forms of smaller level (see \cref{cor:intro} below).

\subsection{Connection with the Jacquet--Langlands correspondence}
The solution to the basis problem may be viewed as a classical interpretation
of the representation-theoretic Jacquet--Langlands correspondence,
which is an injective correspondence from 
(irreducible, infinite-dimensional) automorphic 
representations $\pi$ of $B^\times$ to 
(irreducible, infinite-dimensional) cuspidal automorphic representations $\pi'$
of $\GL(2)$.  It is well known how to view $S_k(N)$ as a sum of certain
invariant subspaces of appropriate $\pi'$.  One can also define
a space of \emph{quaternionic modular (cusp) forms} $S_k(\calO)$, which can be
viewed as a sum of certain invariant subspaces of appropriate $\pi$.
Here the analogue of level is played by an order $\calO$ in $B$, and
$S_k(\calO)$ may be viewed as certain (vector-valued if $k > 0$) functions
on the set of right $\calO$-ideal classes.  Then the Jacquet--Langlands
correspondence may be viewed ``classically'' as a non-canonical linear map
\begin{equation} \label{eq:cJL}
 \JL : S_k(\calO) \to S_{k+2}(N) \qquad (\text{where } k \ge 0),
\end{equation}
which preserves the action of unramified Hecke operators.  This applies
to any order $\calO$ in any $B$, and some $N$ 
depending on $\calO$.  This is related to the basis problem by
associating weight $k+2$ theta series to  $S_k(\calO)$.

However, the proof of the Jacquet--Langlands correspondence does
not answer any of the following questions: (1) given $\calO$, what can we
take for $N$?  (2) what is the kernel of this map?  and (3)
what is the image of this map?  Note that while the map JL in \eqref{eq:cJL} is 
not canonical, if one defines this map so the kernel has minimum possible
dimension, then the kernel and image are canonical up to isomorphism as
Hecke modules. In fact, one can define
JL so that the image is spanned by theta series, which specifies the
image exactly, though then it is not clear if the kernel necessarily has
minimum possible dimension on the old space.

The first question is essentially asking: ($1'$)
how does ``level'' behave under the local Jacquet--Langlands
correspondence?  An answer to this will also let us describe the image of this
map, using the representation-theoretic characterization of the image of
the Jacquet--Langlands correspondence.  The solution to the basis problem in
\cite{HPS:mams} says that, if $D_B = p$ and $\calO$ is a special
order of level $p^r M$ (of a certain type if $r$ is odd), then one can take 
$N=p^r M$ and describe the portion of the image lying in
$S_{k+2}^\new(N)$.  However, \cite{HPS:mams} does not tell us everything we
want to know: it does not completely answer (2)
or (3), or the local analogue ($1'$), or handle general $B$.

Conversely, understanding the ``classical'' Jacquet--Langlands correspondence
\eqref{eq:cJL}
can be applied to the basis problem, and this may be approached via
representation theory.  That is the goal of the present work.  This applies to
arbitrary totally real base fields $F$, and we obtain both more information about the
classical Jacquet--Langlands correspondence and the basis problem when $F=\Q$,
as well a solution to the basis problem for Hilbert modular forms.

To our knowledge, this idea was first realized by Shimizu in \cite{shimizu}, 
albeit in a restricted setting.  In that work, he gave a theta series proof of the
Jacquet--Langlands correspondence (under some conditions), and applied
this to the basis problem over totally real fields $F$ for squarefree level 
$\frakN$ under a parity condition on $[F:\Q]$ and $\frakN$ (and weight $> 2$
at each infinite place).  Shimizu's condition corresponds to using a maximal order
$\calO$ in a definite quaternion algebra $B/F$ of discriminant $\frakN$.
The representation-theoretic approach makes it easy to get a solution
to the basis problem more generally when $[F:\Q]$ is even or
when $\frakN$ is divisible by some prime ideal $\frakp$ such that $\frakp^2 \nmid
\frakN$.  (Shimizu's condition on the weight is not essential for our purposes.  See also
\cite[Theorem 3]{waldspurger} for a more classical treatment when 
$[F:\Q]$ is even.)  
When $F=\Q$,
this is the case treated by Hijikata and Saito \cite{hijikata-saito}.  This
extension of Shimizu's application to the basis problem simply comes from working with Eichler orders 
instead of maximal orders, and using a quaternion algebra ramified at at most one finite place.  (Shimizu's theta series
are not presented in a classically explicit way like Eichler's---cf.\ \cite{gebhardt}---but ours below will be.)

\subsection{Summary of results}

Let $B$ be a definite quaternion algebra of discriminant $\frakD$ 
over a totally real number field $F$.  Let $\calO$ be a special order of level
$\frakN$.  This means, for $\frakp$ a finite prime of $F$, 
$\calO_\frakp$ is an Eichler order for
$\frakp \nmid \frakD$ and $\calO_\frakp$ contains the ring of integers
of a quadratic extension $E_\frakp/F_\frakp$ for $\frakp | \frakD$, and that
the product of the levels of the local orders is $\frakN$.  (We use the convention
that the maximal order $\calO_\frakp$ for $\frakp | \frakD$ has level $\frakp$,
rather than level $\frako_F$.)
Necessarily, $\frakD | \frakN$.  
We further assume $E_\frakp/F_\frakp$ is unramified when $v_\frakp(\frakN)$ is odd.
Such special orders exist for all multiples $\frakN$ of $\frakD$.

Let $d=[F : \Q]$ and $\bfk = (k_1, \ldots, k_d)$ with each $k_i \in \Z_{\ge 0}$.
Let $M_\bfk(\calO)$ (resp.\ $S_\bfk(\calO)$) be the space of quaternionic
modular (resp.\ cusp) forms of weight $\bfk$, level $\calO$, and trivial character.
(These spaces will be denoted $M_\bfk(\calO, 1, 1)$ and $S_\bfk(\calO, 1, 1)$ in
\cref{sec:qmfs}, where we treat quaternionic modular forms with character.)
In fact $S_\bfk(\calO) = M_\bfk(\calO)$ unless $\bfk = \zero = (0, \ldots, 0)$.
In \cref{sec:QMFs},
we define Hecke operators $T_\frakn$ on $M_\bfk(\calO)$, and associated
Brandt matrices which realize their action with respect to a suitable basis.

We first develop a theory of quaternionic newforms and oldforms along the lines
of Casselman's approach \cite{casselman} to Atkin--Lehner theory.  
This relies on a decomposition of $M_\bfk(\calO)$ as a direct sum of invariant
subspaces $\pi_f^K$, where $\pi = \otimes \pi_v$ runs over automorphic representations
of $B^\times$ of ``weight $\bfk$'' and 
$K = \hat \calO^\times = \prod_\frakp \calO_\frakp^\times$.  

\subsubsection{Local results}
In \cref{sec:local-new}, we study the dimensions of $\pi_\frakp^{K_\frakp}$
for $\frakp | \frakD$ (and to treat general characters, more generally the restriction of
$\pi_\frakp$ to $\calO_\frakp^\times$).  Since $\calO_\frakp^\times \supset
\frako_{E_\frakp}^\times$, this is closely
related to the restriction problem for $(B_\frakp^\times, E_\frakp^\times)$,
which was solved by Tunnell  \cite{tunnell:eps} and
Saito \cite{saito} in terms of epsilon factors.
In particular, $\dim \pi_\frakp^{K_\frakp} \le e(E_\frakp/F_\frakp)$,
the ramification index.  We get an essentially complete answer when
$E_\frakp/F_\frakp$ is unramified, but only a partial answer when $E_\frakp/F_\frakp$ is 
ramified.  The former case gives a simple formula for epsilon factors
in certain situations (\cref{rem:gross}).
In the latter case, the obstruction to a complete answer
is that one cannot prove such a simple formula for relevant epsilon factors.

In contrast to local newform theory for $\GL(2)$, there are
two new phenomena here.  First, for fixed $\pi_\frakp$, $\dim \pi_\frakp^{K_\frakp}$ essentially does not increase upon raising the level of $K_\frakp$ beyond the
``conductor'' of $\pi_\frakp$.  This is perhaps not surprising as these representations 
are finite dimensional.  Second, at the lowest level $K_\frakp$ for which 
$\pi_\frakp^{K_\frakp}$ is nonzero, $\pi^{K_\frakp}$ 
may be 2-dimensional when $E_\frakp/F_\frakp$
is ramified (and it is when $\pi_\frakp$ corresponds to a minimal supercuspidal of $\GL(2)$ of even conductor).  

A consequence of our study of local representations is 
a description of how ``level''  behaves along the Jacquet--Langlands 
correspondence, i.e., an answer to question ($1'$), and thus also (1), above.  
In general, for functorial transfers, one often knows depth is
preserved, but the behavior of level is more mysterious.  To our knowledge, 
this is the first complete description of the behavior of level for local functorial
transfer between two groups $G$ and $G'$ where one group is not quasi-split.
Thus our local results may be viewed as a baby case of this general problem.

For general groups $G$, there is no canonical way to define level (conductor),
particularly for non-quasi-split groups, so the above choice of $K_\frakp$ 
may suggest reasonable analogues to consider in rather general situations.
We venture that a relevant property of the compact open subgroup
$K_\frakp = \calO_\frakp^\times$ here is that it contains a maximal compact 
subgroup (here $\frako_{E_\frakp}^\times$)
of a subgroup (here $E_\frakp^\times$) of $G=B_\frakp^\times$ which
possesses the multiplicity one property for restriction of representations.

\subsubsection{Global results}
Using our local results, we give an Atkin--Lehner type 
global decomposition of the space of 
quaternionic modular forms into new spaces of smaller levels
(see \cref{cor:quat-AL-decomp2} and \eqref{eq:AL-decomp2}
for cusp forms, and \eqref{eq:AL-eis-decomp} for Eisenstein series).

With this decomposition of $S_\bfk(\calO)$, and our study of level in the local
Jacquet--Langlands correspondence \eqref{eq:cJL}, we give a description of
both the kernel and the image (questions (2) and (3) above) of the classical
Jacquet--Langlands correspondence.  However, due to our incomplete 
local results
when $E_\frakp/F_\frakp$ is ramified, we do not get a complete description
in all situations.  Things are simpler when we restrict to the
new space, where the hypothesis we need to get a complete description is
that any dyadic $\frakp | \frakD$ satisfies $v_\frakp(\frakN)$ is odd, or 2, or sufficiently
large (see \cref{JL-thm-new} and \cref{rem:2-powers}).
For the full space $S_\bfk(\calO)$, we give a complete description of $\JL$
when any prime $\frakp | \frakD$ satisfies $v_\frakp(\frakN)$ is odd or 2
(see \cref{cor:cusp-JL}).  This condition means that every $\pi$
appears in $S_\bfk(\calO)$ will be locally minimal or 1-dimensional for
each $\frakp | \frakD$.

For simplicity, we only properly state some of our results now
(see \cref{prop:JL-prim} and \cref{cor:cusp-JL} for more general statements).  Let $\two = (2, \ldots, 2)$,
and write $\frakN = \frakN' \frakM$, where $\frakM$ is the part of $\frakN$
coprime to $\frakD$.  For $\frakd | \frakN$, let $S_\bfk^{\frakd \textrm{-} \new}(\frakN)$
be the subspace of $S_\bfk(\frakN)$ consisting of forms which are $\frakp$-new
for all $\frakp | \frakd$.  We also say a form is $\frakp$-primitive if there is no twist
which lowers the level at $\frakp$.

\begin{thm} \label{intro-thm1} There is a Hecke-module homomorphism
$\JL:S_\bfk(\calO) \to S_{\bfk+\two}(\frakN)$ such that

\begin{enumerate}[(i)]
\item any newform $f \in S_{\bfk+\two}(\frakN)$ which is $\frakp$-primitive for
$\frakp | \frakD$ is contained in the image;

\item if $v_\frakp(\frakN)$ is odd for all $\frakp | \frakD$, then $\JL$ is injective
and yields an isomorphism 
\[ S_\bfk(\calO) \simeq \bigoplus_{\frakd} S_{\bfk + \two}^{\frakd \textrm{-} \new}(\frakd \frakM), \]
where $\frakd$ runs over all divisiors of $\frakN'$ such that $v_\frakp(\frakd)$
is odd for all $\frakp | \frakD$.
\end{enumerate}
\end{thm}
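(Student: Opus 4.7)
The plan is to build $\JL$ spectrally from the global Jacquet--Langlands correspondence and then verify (i) and (ii) by combining the local results on $\dim \pi_\frakp^{K_\frakp}$ with the quaternionic Atkin--Lehner decomposition set up earlier in the paper.

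To construct $\JL$, decompose
\[ S_\bfk(\calO) = \bigoplus_\pi \pi_f^K, \]
where $\pi$ runs over the automorphic representations of $B^\times(\A_F)$ of weight $\bfk$ and $K = \hat\calO^\times$. For each such $\pi$, global Jacquet--Langlands produces a cuspidal representation $\pi' = \JL(\pi)$ of $\GL_2(\A_F)$. At every finite place $v$ choose an embedding of $\pi_v^{K_v}$ into a space of fixed vectors of $\pi'_v$: at $\frakq \nmid \frakD$ via classical local newform theory for the Eichler order $\calO_\frakq$, and at $\frakp \mid \frakD$ via the local dimension results of \cref{sec:local-new}, which guarantee $\dim \pi_\frakp^{K_\frakp} \ge 1$ whenever the local conductor of $\pi'_\frakp$ is at most $v_\frakp(\frakN)$. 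Assembling these embeddings produces a Hecke-equivariant map $\JL : S_\bfk(\calO) \to S_{\bfk+\two}(\frakN)$.

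For (i), let $f$ be a newform in $S^\new_{\bfk+\two}(\frakN)$ that is $\frakp$-primitive at every $\frakp \mid \frakD$, and let $\pi'$ be the associated cuspidal representation. Since no twist of a principal series is primitive at a prime of strictly positive local conductor, $\frakp$-primitivity together with $v_\frakp(\frakN) \ge 1$ forces $\pi'_\frakp$ to be discrete series and thus in the image of local JL: $\pi'_\frakp = \JL(\pi_\frakp)$ for some finite-dimensional $\pi_\frakp$ of $B_\frakp^\times$. Combining with the obvious components at split places (and the appropriate discrete series at ramified archimedean places) yields a global automorphic $\pi$ on $B^\times$ whose global JL-lift is $\pi'$, by the representation-theoretic characterization of the image of JL. It remains to verify $\pi_v^{K_v} \neq 0$ at every finite $v$: at $\frakq \nmid \frakD$ this is immediate from the equality of the local conductor and the Eichler-order level, while at $\frakp \mid \frakD$ it follows from the local dimension results since $v_\frakp(\frakN)$ is at least the local conductor of $\pi'_\frakp$.

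For (ii), the assumption that $v_\frakp(\frakN)$ is odd at each $\frakp \mid \frakD$ forces $E_\frakp/F_\frakp$ to be unramified by the special-order hypothesis, so the local results give $\dim \pi_\frakp^{K_\frakp} \le e(E_\frakp/F_\frakp) = 1$. Applied $\pi$ by $\pi$ this bound delivers injectivity of $\JL$. For the explicit decomposition, I would match the quaternionic Atkin--Lehner decomposition with the $\GL_2$ Atkin--Lehner decomposition: the $\pi$-isotypic contribution inside $S_\bfk(\calO)$ is nonzero precisely when, at each $\frakp \mid \frakD$, the local conductor of $\pi'_\frakp$ equals an odd $v_\frakp(\frakd)$ with $\frakd \mid \frakN'$, and the $\frakq$-local old-form multiplicity at $\frakq \mid \frakM$ is the one produced by the Eichler-order new/old theory. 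Grouping by $\frakd$ and summing over $\frakM$-divisors reassembles the image as $\bigoplus_\frakd S^{\frakd\text{-}\new}_{\bfk+\two}(\frakd \frakM)$, where $\frakd$ runs over divisors of $\frakN'$ with $v_\frakp(\frakd)$ odd for all $\frakp \mid \frakD$. The main obstacle is the bookkeeping of multiplicities on both sides: one must check that local dimension being exactly $1$ at every $\frakp \mid \frakD$ (which the odd-conductor hypothesis secures) combined with the local Eichler-order old-new decomposition at $\frakq \mid \frakM$ produces equality of total dimensions, not merely an injection. This is precisely the step that collapses in the ramified-$E_\frakp$ setting, which is why the case of even local conductor at $\frakp \mid \frakD$ is excluded from part (ii).
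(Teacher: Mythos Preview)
Your approach is the paper's: build $\JL$ spectrally from global Jacquet--Langlands, deduce (i) from the local nonvanishing results (\cref{thm:34}, \cref{thm:35}) for minimal $\pi_\frakp$, and obtain (ii) by combining the quaternionic Atkin--Lehner decomposition (\cref{cor:quat-AL-decomp2}, \eqref{eq:AL-decomp2}) with the bound $\dim \pi_\frakp^{K_\frakp} \le 1$ in the unramified-quadratic-type case; this is exactly how \cref{prop:JL-prim} and \cref{cor:cusp-JL} are proved.

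Two inaccuracies are worth flagging, though neither is fatal. First, your parenthetical that the local results ``guarantee $\dim \pi_\frakp^{K_\frakp} \ge 1$ whenever the local conductor of $\pi'_\frakp$ is at most $v_\frakp(\frakN)$'' is false: with $E_\frakp/F_\frakp$ unramified, trivial character, and $c(\pi_\frakp)$ even, \cref{thm:34} and \cref{prop:loc-non-min} force $\dim \pi_\frakp^{K_\frakp} = 0$. This claim is not actually needed---you are mapping \emph{out} of $\pi_f^K$, so nonvanishing is given---but it signals some confusion about the direction of the construction. Second, ``embedding'' at $\frakp \mid \frakD$ is too strong in general: if $v_\frakp(\frakN)$ is even with $c(\pi_\frakp) = v_\frakp(\frakN)$ and $\dim \pi_\frakp > 1$, then \cref{thm:35} gives $\dim \pi_\frakp^{K_\frakp} = 2$ while the $\GL_2$ newvector space has dimension $1$, so no injection exists. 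The paper's \cref{prop:JL-prim} avoids this by sending an entire basis of $\pi_\bfk^\Omega$ to the single newform $f_{\pi'}$, which is enough for (i); under hypothesis (ii) the dimension is at most $1$ and your embedding is legitimate.
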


The (notational) complication in the description when 
$v_\frakp(\frakN) = 2$ for some $\frakp | \frakD$ is that the kernel is generally nonzero,
and its description depends not just on the conductor of $\pi_\frakp$, but whether
that representation is 1-dimensional or not.  For instance, in the simple case that
$F=\Q$, $D_B = p$, and $\calO$ has level $p^2$, then we have
\[ S_k(\calO) \simeq S^\new_{k+2}(p) \oplus S^{\text{new-sp}}_{k+2}(p^2)
\oplus 2 S^{\text{new-sc}}_{k+2}(p^2), \]
where the second (resp.\ third) space on the right denotes the span of newforms whose 
local representation is special (resp.\ supercuspidal) at $p$.  These first two spaces
on the right correspond to the associated local representation $\pi_p$ of 
$B_p^\times$ being 1-dimensional.

The issue when $v_\frakp(\frakN)$ is some higher even power
for a $\frakp | \frakD$ is that we cannot say what forms $f$ whose $\frakp$-power level
is odd appear in the image of the Jacquet--Langlands correspondence.  If such
a $\frakp$ is dyadic, we also cannot always say what non-minimal 
$\frakp$-new
forms appear.  Modulo these issues,
we can describe $S_\bfk(\calO)$ completely as a Hecke module in terms of
Hilbert modular forms for an arbitrary $\frakN$, and consequently the map JL.

We note that, when restricted to new spaces, the image of JL is always canonical.
However, even restricted to new spaces, the kernel of JL is not canonical when it
is nonzero.  Note the kernel on the new space is in general nonzero in the above 
level $p^2$ example, or more generally when primes occur to even
powers in $\frakN'$.  See \cref{rem:theta-ker} for the dimension of the kernel restricted
to the new space.

Part (ii) of the above theorem specializes to the known
extension of \cite{shimizu} when $v_\frakp(\frakN) = 1$ for all $\frakp | \frakD$.
If further $F=\Q$, $k=0$ and $\calO$ is maximal, 
a different proof follows from \cite{ponomarev}.  (The latter
paper works with theta series, but its result is equivalent to the above type of
statement via \cref{qmf-to-theta} and \cref{theta-thm2}.)
 
Finally, in \cref{sec:theta}, we explain how to associate spaces of theta series
to spaces of quaternionic modular forms via Brandt matrices.  This
gives a realization of the map JL in terms of classical theta series.
Our treatment of theta series is also
rather representation theoretic, as opposed to Eichler's 
treatments in \cite{eichler} and \cite{eichler:1977}.  
A key ingredient is a comparison
of two different definitions of local ramified Hecke operators (both for $\frakp | \frakD$
and $\frakp \nmid \frakD$).
This leads to our solution to the basis problem.
Again, for simplicity of exposition, we will not state our results here 
in full generality or precision.

First, we have the following ``weak solution'' to the basis problem (see
\cref{cor:basis} and the subsequent discussion), generalizing the $F=\Q$
case of \cite{HPS:mams}.

\begin{thm} \label{thm:intro}
The space $S_{\bfk+\two}^\new(\frakN)$ is linearly generated
by theta series associated to $S^\new_\bfk(\calO)$ together with twists 
of Hilbert modular forms (with character) of lower level.
\end{thm}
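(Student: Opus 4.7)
The plan is to combine \cref{intro-thm1}(i) with the theta-series realization of $\JL$ promised in \cref{sec:theta}. First, I would decompose an arbitrary newform $f \in S^\new_{\bfk+\two}(\frakN)$ according to its local behavior at each $\frakp | \frakD$. Call $f$ \emph{fully primitive} if every $\frakp$-component of its representation is $\frakp$-primitive for $\frakp | \frakD$; then \cref{intro-thm1}(i) places $f$ in the image of $\JL : S_\bfk(\calO) \to S_{\bfk+\two}(\frakN)$. Otherwise, there is some $\frakp | \frakD$ and a character $\chi_\frakp$ such that $f \otimes \chi_\frakp^{-1}$ has strictly smaller conductor at $\frakp$, so $f$ is a twist (with character) of a Hilbert modular form of strictly lower level. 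Consequently $S^\new_{\bfk+\two}(\frakN)$ is spanned by the image of $\JL$ restricted to the new space plus such twists of lower-level forms.

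Second, I would upgrade ``image of $\JL$'' to ``span of theta series.'' Restricting $\JL$ to $S^\new_\bfk(\calO)$ lands inside $S^\new_{\bfk+\two}(\frakN)$ because the local Jacquet--Langlands correspondence matches the local new condition on the quaternionic side with the local new condition on $\GL(2)$ (this uses the local newform dimensions computed in \cref{sec:local-new} and the Atkin--Lehner decomposition of $M_\bfk(\calO)$ built on them). To realize this image classically, I would invoke the Brandt-matrix construction of \cref{sec:theta}: to each $\phi \in S^\new_\bfk(\calO)$ associate the generating function $\Theta(\phi) = \sum a_n(\phi) q^n$ whose coefficients are read off the Brandt matrices of $\calO$ paired with $\phi$. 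The point is that these classical theta series span the same Hecke-stable subspace as $\JL(S^\new_\bfk(\calO))$, which is forced by matching the action of Hecke operators on both sides. Assembling steps one and two yields the statement: every newform in $S^\new_{\bfk+\two}(\frakN)$ is either a linear combination of theta series $\Theta(\phi)$ for $\phi \in S^\new_\bfk(\calO)$, or a twist $g \otimes \chi$ of a Hilbert modular newform $g$ of lower level.

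The main obstacle, and the only non-formal part, is the second step: identifying the abstractly-defined $\JL$-image with a span of honest Brandt theta series. For unramified Hecke operators this reduces to Eichler's classical identity relating Brandt-matrix entries to Hecke eigenvalues of $\Theta(\phi)$, and transports straight across JL. The genuinely new content is at the ramified primes---both at $\frakp | \frakD$, where the order $\calO_\frakp$ is non-Eichler and there is no obvious matrix-coset description of the Hecke action, and at $\frakp | \frakM$ where $\calO_\frakp$ is a proper Eichler order so the ``degenerate'' Hecke operators need care. The paper flags precisely the comparison of two definitions of these local ramified Hecke operators as the key ingredient, and I expect that reconciliation to absorb the bulk of the work; once it is in hand, the two-step decomposition above delivers the theorem.
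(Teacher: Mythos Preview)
Your proposal is correct and follows essentially the same route as the paper: split newforms according to $\frakD$-primitivity, capture the primitive ones via \cref{intro-thm1}(i) (which is \cref{prop:JL-prim}), twist the non-primitive ones down to lower level with character, and then identify the $\JL$-image with the span of Brandt theta series via the ramified Hecke-operator comparison (\cref{prop:hecke-comp1}, \cref{prop:hecke-comp2}, packaged as \cref{theta-thm2}). You have also correctly isolated the only genuinely nontrivial step---matching the two local ramified Hecke actions so that the theta series lie in $S^\new_{\bfk+\two}(\frakN)$ rather than merely in some larger level.
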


By suitably varying our quaternion algebra $B$ and order $\calO$, 
we get the following solution to the basis problem (see the discussion after
\cref{cor:basis}).

\begin{cor} \label{cor:intro}
Any space $S_\bfk^\new(\frakN)$ is linearly generated
by twists of suitable quaternionic theta series, unless $[F:\Q]$ is odd
and $\frakN$ is a perfect square, in which case the space is generated by
twists of quaternionic theta series together with twists of forms of level $1$
and nebentypus conductor $\frakN^{1/2}$.
\end{cor}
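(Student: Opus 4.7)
The plan is to apply \cref{intro-thm1}(i), in its nebentypus-enabled form developed in \cref{sec:qmfs}, to each newform after twisting to a twist-minimal model. Rephrasing the theorem with weights shifted, JL has source $S_{\bfk-\two}(\calO)$ and target $S_\bfk(\frakN)$, so the discussion implicitly assumes $\bfk \ge \two$.

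Given a newform $f \in S_\bfk^\new(\frakN)$, I first choose a finite-order Hecke character $\chi$ so that $g = f \otimes \chi$ is a newform (with nebentypus) of twist-minimal conductor $\frakN_0 = \frakN_0(f) \mid \frakN$; by construction the local component of $g$ at each prime dividing $\frakN_0$ is twist-primitive. I then pick a definite quaternion algebra $B/F$ with squarefree discriminant $\frakD \mid \frakN_0$ satisfying the definiteness parity condition $\#\{\frakp \mid \frakD\} \equiv d \pmod 2$ (where $d = [F:\Q]$), together with a special order $\calO \subset B$ of level $\frakN_0$. Since $g$ is $\frakp$-primitive for every $\frakp \mid \frakD$, the character version of \cref{intro-thm1}(i) places $g$ in the image of JL, realizing $g$ as a quaternionic theta series and $f = g \otimes \chi^{-1}$ as a twist thereof.

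The only obstruction to this scheme is when no valid $\frakD$ exists, which happens exactly when $\frakN_0 = 1$ and $d$ is odd (the empty set of primes has even cardinality). But $\frakN_0(f) = 1$ forces each $\pi_\frakp$ (for $\frakp \mid \frakN$) to be a twist of an unramified representation, so $v_\frakp(\frakN) = 2c(\chi_\frakp)$ is even at every such $\frakp$; hence $\frakN$ must be a perfect square and $c(\chi) = \frakN^{1/2}$. When $d$ is even, or $\frakN$ is not a square, the obstruction never arises and every newform is a twist of a quaternionic theta series. In the exceptional case ($d$ odd, $\frakN$ square), the remaining newforms $f$ with $\frakN_0(f) = 1$ are expressed instead as twists---by characters of conductor $\frakN^{1/2}$---of forms of level $1$, giving the corollary's second conclusion.

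The main technical obstacle is verifying that \cref{intro-thm1}(i) extends to newforms with nontrivial nebentypus, i.e., the Hecke-module image statement for $\JL : S_{\bfk-\two}(\calO, \omega, \omega') \to S_\bfk(\frakN_0, \omega)$; this nebentypus generalization is precisely what \cref{sec:qmfs} and the more general \cref{prop:JL-prim} provide. A secondary technical point is the existence of a special order $\calO \subset B$ of level $\frakN_0$ satisfying the unramified-$E_\frakp$ condition at primes $\frakp$ with $v_\frakp(\frakN_0)$ odd; this is guaranteed by the paper's setup, which asserts that such orders exist for all multiples of $\frakD$.
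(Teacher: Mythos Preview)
Your approach---twist each newform to a primitive model $g$ of conductor $\frakN_0$, choose a definite $B$ with $\frakD \mid \frakN_0$ of the correct parity, and invoke \cref{prop:JL-prim} together with the theta realization of \cref{sec:theta}---is exactly the argument the paper sketches in the paragraph following \cref{cor:basis}.

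One point in your obstruction analysis needs tightening. For \cref{prop:JL-prim} to apply, the standing hypothesis $c(\omega_\frakp) \le r_\frakp/2$ of \cref{sec:qmfs} must hold at each $\frakp \mid \frakD$, and this forces $g_\frakp$ to be \emph{discrete series}: a minimal principal series of conductor $r_\frakp \ge 1$ has central character of conductor exactly $r_\frakp$, violating the bound. Thus $\frakD$ must be chosen among primes where $f_\frakp$ (equivalently $g_\frakp$) is discrete series, not merely among primes dividing $\frakN_0$. With that correction the obstruction becomes ``$d$ odd and $f_\frakp$ is principal series at every $\frakp \mid \frakN$,'' which is precisely how the paper phrases it before passing to the twist-of-level-$1$ description. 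Your narrower condition ``$\frakN_0 = 1$'' misses the case where $f_\frakp = \pi(\mu_\frakp, \mu_\frakp^{-1})$ with $\mu_\frakp^2$ ramified, since then the minimal twist at $\frakp$ still has positive conductor.
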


We note that, like the solution in \cite{HPS:mams}, this requires
using quaternionic theta series ``with character'' for lower levels, 
so we must work with quaternionic modular forms and Brandt matrices
with character.
At the end of \cref{sec:theta}, we also explain what one can say about the basis problem for 
Hilbert modular forms with character.

Finally, we describe two applications.
First, even when $F=\Q$, \cref{intro-thm1}, and
the corresponding application to the basis problem, is new.
Namely we do not restrict to quaternion algebras of prime discriminant.
Computationally, say to use Brandt matrices to compute
spaces of modular forms as in \cite{pizer:algorithm} or \cite{DV}, 
this is desirable.  E.g., if we want to compute the newforms of level
$N = (pqr)^3$, where $p$, $q$, $r$ are distinct primes, 
we can use an order of level $N$ in the quaternion algebra of discriminant $pqr$.
This will only pick up newforms of level $p^e q^f r^h$, where $e, f, h \in \{ 1, 3 \}$,
and each of them only once.  On the other hand if we were to use a quaternion
algebra ramified only at $p$, we will pick up many more old forms at $q$ and $r$,
complicating the calculations.

More generally, if one wants to focus on studying newforms, it is often difficult
to isolate them analytically on $\GL(2)$.  However by working with 
quaternion algebras which are ``as ramified as possible'' one can eliminate 
most old forms.

Second, many arithmetic properties of modular forms such as congruences
and $L$-values are studied by using (definite and indefinite) quaternion
algebras and the Jacquet--Langlands correspondence.  For some problems,
understanding the representation-theoretic Jacquet--Langlands correspondence
suffices, but for others one wants to understand it at the level of modular forms.
One example of the latter is our construction of Eisenstein congruences
via quaternionic modular forms \cite{me:cong}.  One specific result
there is that, for $p$ odd, there is always a mod $p$ Eisenstein congruence
in $S_2(p^3)$, but it was not clear to us at the time how to prove there is such a 
congruence in $S_2^\new(p^3)$. 
In \cref{sec:cong}, we show how to deduce this from \cref{intro-thm1}
and \cite{me:cong}.  We confess that our original motivation
arose, not from an interest the basis problem, but from trying to understand 
spaces of quaternionic modular forms for applications to congruences and $L$-values.

\medskip
\noindent
{\bf Acknowledgements.}  I would like to thank Tom Shemanske for discussions
about his work with Hijikata and Pizer.  I 
also thank Lassina Demb\'el\'e and John Voight for kind feedback.  I am grateful
to the anonymous referees, especially one particularly diligent referee,
for very detailed and insightful comments and corrections. 
This work was supported by grants from the 
Simons Foundation/SFARI (240605 and 512927, KM).

%
%

\section{Local preliminaries}

In this section and the next, we keep the following notation.

Let $F$ be a finite extension of $\Q_p$ with residue degree $q$.
For a finite extension $E/F$ of fields, we let $\frako_E$ denote the ring of integers,
$\frakp_E$ the prime ideal, $\fraku_E^0 = \frako_E^\times$ the unit group,
$\fraku_E^n = 1 + \frakp_E^n$
the $n$-th higher unit group ($n \ge 1$), $\varpi_E$ a uniformizer, $v_E$
the (exponential) valuation normalized so $v_E(\varpi_E) = 1$,
$N=N_{E/F}$ the norm map, and $\tr=\tr_{E/F}$ the trace map.
For a character $\chi$ of $E^\times$ (or even just of
$\frako_E^\times$), let $c(\chi)$ denote its
conductor, i.e., the minimal $n \ge 0$ such that $\fraku_E^n \subset \ker 
\chi$.
When $E=F$, we usually omit the subscript $F$.

Denote by $B$ the unique quaternion division algebra over $F$.
Let $N=N_{B/F}$ and $\tr = \tr_{B/F}$ denote the reduced norm
and trace maps.  The valuation $v_F$ induces a valuation
$v_B$ on $B$ such that $v_B(\alpha) = v_F(N(\alpha))$ for $\alpha \in B$.
Then $\calO_B = \{ \alpha \in B : v_B(\alpha) \ge 0 \}$ is the unique
maximal order of $B$ with (2-sided) prime ideal $\frakP = \{ \alpha \in B :
v_B(\alpha) \ge 1 \} = \varpi_B \calO_B$, where $\varpi_B$ denotes
a uniformizer of $B$, i.e., an element of $B$ with valuation 1.
We also define the higher unit groups $\calU^0 = \calO_B^\times$
and, for $n \ge 1$, $\calU^n = 1 + \frakP^n$.  
Note $\calU^n$ is a normal subgroup of $\calO_B^\times$
as conjugation stabilizes $\frakP^n$, and the collection $\{ \calU^n \}$
forms a neighborhood basis of compact open sets for the identity in $B^\times$.

We will need some facts about quotients of unit groups.  First note that
$\calO_B/\frakP$ is a finite division ring, and thus a field.  It has order $q^2$.
Hence $\calU^0/\calU^1$ is commutative and cyclic of order $q^2-1$.
For $n \ge 1$, any successive quotient $\calU^n/\calU^{n+1}$ has order $q^2$,
and is also abelian.  In fact, since
\begin{equation} \label{eq:abel-quo}
 (1+\varpi_B^n x)(1+\varpi_B^n y) \in 1+\varpi_B^n(x+y) + \frakP^{2n},
\quad x, y \in \calO_B,
\end{equation}
we see that $\calU^n/\calU^{2n}$ is abelian of order $q^{2n}$ for each $n$.

Suppose $E/F$ is an extension which embeds in $B$.  This means $E=F$
or $E$ is any quadratic extension of $F$.  If $e=e(E/F)$ denotes the ramification
index of $E/F$, then we have $\frakP^n \cap E = \frakp_E^{\lceil ne/2 \rceil}$,
where $\lceil x \rceil$ denotes the least integer (ceiling) function.
Thus $\calU^n \cap E = \fraku_E^{\lceil ne/2 \rceil}$.

If $x \in \frakP^n$, then $N(1+x) \in 1 + \tr x + \frakP^{2n} \subset 1 + \frakp^{\lceil n/2 \rceil}$.
If $E/F$ is the unramified quadratic extension, then
$\frakp_E^{\lceil n/2 \rceil} \subset \frakP^n$ and since $N_{E/F}$ is surjective on
higher unit groups, we see that in fact 
$N(\calU^n) = \fraku^{\lceil n/2 \rceil}$.

If $E/F$ is a quadratic extension, let $t=t(E/F) = v_E(\bar x - x) - 1$
where $x \in E$ is such that $\frako_E = \frako_F + x \frako_F$.
Then $t=-1$ if and only if $E/F$ is unramified, and $t=0$ if and only if
$E/F$ is ramified with odd residual characteristic.  If $E/F$ is unramified,
we have $N(\fraku_E^n) = \fraku^n$ for each $n \ge 0$.  If $E/F$ is ramified,
then for each $n \ge 0$, we have 
\begin{equation} \label{eq:norm-ram}
N(\fraku_E^{t+2n+1}) = N(\fraku_E^{t+2n+2}) = \fraku^{t+n+1}.
\end{equation}
In particular, if $E/F$ is ramified with odd residual characteristic, we have
$N(\fraku_E^{n}) = \fraku^{\lceil n/2 \rceil}$ for all $n \ge 1$.
See \cite{HPS:crelle} for more details.

\subsection{Special orders}

Let $E/F$ denote a quadratic extension in $B$.  In \cite{HPS:crelle}, 
Hijikata, Pizer and Shemanske considered the local orders
\begin{equation}
\calO_r(E) = \frako_E + \frakP^{r-1} \qquad (r \ge 1),
\end{equation}
which they termed \emph{special}.  The \emph{level} of a special
order $\calO$ is $\frakp^r$ if $r$ is minimal such that $\calO \simeq \calO_r(E)$
for some $E$.
Observe that, for any $E$, $\calO_1(E)=\calO_B$ is the maximal
order.  Also note that $\calO_r(E)^\times = \frako_E^\times \calU^{r-1}$.

We recall a few facts about special orders from \emph{op.\ cit.}

First, an order $\calO$ of $B$ is of the form $\calO_r(E)$ for some
$r$ if and only if $\frako_E$ embeds in $\calO$.  

If $E/F$ is unramified, then $\calO_{2m-1}(E) = \calO_{2m}(E) \ne \calO_{2m+1}(E)$ for 
any $m \ge 1$.
In particular, $\frako_E^\times \calU^{2m} = \frako_E^\times \calU^{2m+1}$,
which in fact holds for $m \ge 0$.  For $E/F$ unramified, the level of
$\calO_r(E)$ always has odd valuation.

If $E/F$ is ramified, then the orders $\calO_r(E)$ are distinct for all $r \ge 1$.
Note that $\frako_E = \frako_F + \frakp_E$ as the index of the latter in the former
is $[\frako_E:\frakp_E]/[\frako_F : \frakp_F] = q/q =1$.
So if $K/F$ is another ramified quadratic extension in $B$,
then $\frako_E \subset
\calO_2(K)$ as $\frako_F \subset \frako_K$ and $\frakp_E \subset \frakP$.
Hence $\calO_2(E) \subset \calO_2(K)$ and conversely, i.e., 
$\calO_r(E) = \calO_r(K)$ for $r = 1, 2$.  On the other hand, when $q$ is odd,
$\frako_E$ does not embed in $\calO_3(K)$, so $\calO_r(E)$ and $\calO_r(K)$
are non-isomorphic for all $r \ge 3$ when $K \not \simeq E$.  
(See \cite[Lemma 3.9]{HPS:crelle} for the case of $q$ even.)

\subsection{Local representations} \label{sec:loc-rep}

Next we recall some facts about the representation theory of $B^\times$,
most of which can be found in, or easily derived from, some combination of
\cite{BH}, \cite{tunnell:llc} and \cite{tunnell:eps}.  In any case, we at least
indicate the proof for facts not explicitly stated in \cite{BH} (e.g., facts
about conductors and dimension formulas).

Let $\pi$ be a smooth irreducible complex representation of $B^\times$.
Let $\omega_\pi$ be the central character of $\pi$.  
For a character $\mu$ of $F^\times$, denote by $\pi \otimes \mu$ the twist
$(\mu \circ N_{B/F}) \cdot \pi$, which has central character $\omega_\pi \mu^2$.
So we can choose a twist $\pi \otimes \mu$ which is trivial on $\langle 
\varpi \rangle$.  Hence, up to twisting,
we can view $\pi$ as a representation of the compact quotient $B^\times/
\langle \varpi \rangle$, and thus $\pi$ is finite dimensional.

By smoothness there is some $n$ such that
$\pi$ restricted to $\calU^n$ acts trivially, i.e., $\calU^n \subset \ker \pi$.  
The minimal integer $\ell \ge 0$
such that $\calU^{\ell + 1} \subset \ker \pi$ is called the (unnormalized) 
level of $\pi$, and is denoted by $\ell = \ell(\pi)$.

Denote by $\pi' = \JL(\pi)$ the Jacquet--Langlands transfer of $\pi$ to $\GL_2(F)$,
which is a discrete series representation of $\GL_2(F)$ with the same
central character as that of $\pi$.  Set $c(\pi) = c(\pi')$, where 
$c(\pi')$ is the conductor of $\pi'$.  Alternatively, we can
define $c(\pi)$ to be the minimal $n \ge 1$ such that $\pi$ is trivial on $\calU^{n-1}$,
and thus $\ell(\pi) = \max \{ c(\pi) - 2, 0 \}$.  
That these definitions are the same is easy to see when
$\dim \pi = 1$ (see below).  For general $\pi$, it
follows from the fact that $\ell(\pi') = \frac 12 \ell(\pi)$, using the normalization of level
for $\GL_2(F)$ as in \cite{BH} (see Section 56.1), and comparing $\ell(\pi')$
with $c(\pi')$ via $\eps$-factor relations.  
Note that
$\omega_\pi$ must be trivial when restricted to 
$\calU^{c(\pi) - 1} \cap F$,
so $c(\omega_\pi) \le \lfloor \frac{c(\pi)}2 \rfloor$.

We say $\pi$ is minimal if $c(\pi) \le c(\pi \otimes \mu)$ for all $\mu$; equivalently, if $\ell(\pi) \le \ell(\pi \otimes \mu)$ for all $\mu$
and $c(\pi) = 1$ if $\dim(\pi) = 1$.
If $\pi$ is minimal, then
$c(\pi \otimes \mu) = \max \{ c(\pi), 2c(\mu) \}$ \cite[Proposition 3.4]{tunnell:llc}.  
Hence any non-minimal representation has even conductor, and thus even level.

\subsubsection*{1-dimensional representations}

Any 1-dimensional representation of $B^\times$ is of the form
$\pi = \mu \circ N$ for some character $\mu$ of $F^\times$.  Then
$\omega_\pi = \mu^2$.  Recalling that $N(\calU^n) = \fraku^{\lceil n/2 \rceil}$
we see that $\ell(\pi) = \max \{ 2c(\mu) - 2, 0 \}$.
Since $\pi' = \St \otimes \mu$, where $\St$ is the Steinberg representation,
we see that $c(\pi)=c(\pi') = \max \{ 2c(\mu), 1 \}$.  Hence 
$\ell(\pi) = \max \{ c(\pi) - 2, 0 \}$.  

Note if $c(\pi) = 1$, then $\mu$ is unramified so $\pi$ is trivial on $\calU^0$.
Otherwise, $\ell(\pi) = c(\pi) - 2$.  Hence we always have that $\pi$ is trivial on
$\calU^{c(\pi) - 1}$, as asserted above.

\subsubsection*{Nonabelian representations}

Now we describe the higher-dimensional representations of $B^\times$
in terms of induction from certain finite-index subgroups.

Suppose $\dim \pi > 1$.
Since $B^\times/\calO_B^\times \simeq \Z$,
$\pi$ must be nontrivial on $\calO_B^\times$, so $c(\pi) \ge 2$.
In fact, $\pi'$ is supercuspidal.  Let $\ell=\ell(\pi)=c(\pi)-2$, so
$\calU^{\ell+1}$ is the largest unit group in $\ker \pi$.
Recalling that $\calU^{\lceil (\ell+1)/2 \rceil}/\calU^{\ell+1}$ is abelian,
$\pi$ restricted to $\calU^{\lceil (\ell+1)/2 \rceil}$ must break up as a sum
of characters.

Fix an additive character $\psi : F \to \C^\times$ of level 1, i.e., $\psi$ is trivial on
$\frakp$ but not $\frako_F$.  For $\alpha \in B$, define
\[ \psi_\alpha(x) = \psi(\tr (\alpha(x-1))), \quad x \in B. \]
By essentially the same calculation as in \eqref{eq:abel-quo},
we can see that $\psi_\alpha$ is a character of $\calU^{\lceil (m+1)/2 \rceil}$
which is nontrivial on $\calU^{m}$ but trivial on $\calU^{m+1}$, where
$m=-v_B(\alpha) > 0$.  Furthermore, any character of 
$\calU^{\lceil (m+1)/2 \rceil} / \calU^{m+1}$ is of the form $\psi_\alpha$ for
some $\alpha \in \frakP^{-m}$.

We say $\alpha \in B^\times$ is minimal if it has odd valuation or
if the characteristic polynomial of $\varpi^{-v_B(\alpha)/2}\alpha$ is irreducible
mod $\frakp$.

\subsubsection*{Odd-level representations}

First suppose $c(\pi) = 2n+1 \ge 3$, i.e., $\ell(\pi) = 2n-1$.  Necessarily, 
$\pi$ is minimal. 
Since we may view $\pi |_{\calU^n}$ as a representation of the
abelian group $\calU^n/\calU^{2n}$, it is a sum of characters 
$\psi_\alpha$ for some collection of $\alpha \in \frakP^{1-2n}$.
By Mackey theory and normality of $\calU^n$, 
all of these $\psi_\alpha$'s are conjugate.  Fix one such $\alpha$.
Then $v_B(\alpha) = 1-2n$.

Let $E=F[\alpha] \subset B$, which is a ramified quadratic extension of $F$.
Then conjugation by some $x \in B^\times$ fixes $\psi_\alpha$ if and only if
$x \in J := E^\times \calU^n$.  From this, one can deduce that 
$\pi \simeq \Ind_J^{B^\times} \Lambda$ for some character $\Lambda$ of
$J$ such that $\Lambda|_{\calU^n} = \psi_\alpha$.

Since $B^\times/J$ has cardinality $|\calO_B^\times/\frako_E^\times \calU^n|
= q^{n-1}(q+1)$, this is the dimension of $\pi$.

\subsubsection*{Even-level representations}

Now suppose $c(\pi) = 2n \ge 2$, i.e., $\ell(\pi) = 2n-2$.  We further assume
$\pi$ is minimal.  As above, $\pi |_{\calU^n}$ is a representation of
the abelian group $\calU^n/\calU^{2n-1}$.  
Assuming $n \ge 2$, the restriction to
$\calU^{n}$ contains a character $\psi_\alpha$, where $\alpha$ is minimal 
of valuation $2-2n$.

Let $E$ be the unramified quadratic extension of $F$.
For any $n \ge 1$, set $J=E^\times \calU^{n-1}$.  In this case, we
can write $\pi \simeq \Ind_J^{B^\times} \Lambda$ where $\Lambda$
is a representation of $J/\calU^{2n-1}$ of dimension 1 (resp.\ $q$) 
if $n$ is odd (resp.\ even).  Specifically, if $n \ge 2$, then $\Lambda|_{\calU^{n}}
\simeq c \psi_\alpha$, for some $\alpha \in E$ as above, where $c$ is 1 or $q$
according to the parity of $n$ (see \cite[Section 54.7]{BH}).  If $n=1$ (i.e., $\pi$ has level zero),
we may take $\Lambda$ to be a character of 
$J = E^\times \calU^1 = E^\times \calO_B^\times$ which is trivial
on $\calU^1$ with $\Lambda|_{E^\times}$ a character of $E^\times$ of conductor
1.  Hence for all $n \ge 1$, we have that $\Lambda|_{E^\times}$ is a sum of characters
which are nontrivial on $\fraku_E^{n-1}$.

Again, computing $|B^\times/J|=2|\calO_B^\times
/\frako_E^\times \calU^{n-1}| = 2q^{2\lfloor (n-1)/2 \rfloor}$ (times $\dim \Lambda$)
gives the dimension of $\pi$, and we summarize the even and odd conductor
cases together for future reference:

\begin{lemma} \label{lem:dimpi}
Suppose $\pi$ is a minimal representation of $B^\times$
of dimension $> 1$.  If $c(\pi) = 2n+1$, then $\dim \pi = q^{n-1}(q+1)$.
If $c(\pi) = 2n$, then $\dim \pi = 2q^{n-1}$.
\end{lemma}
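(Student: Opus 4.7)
The lemma is essentially a bookkeeping consequence of the explicit induced-representation descriptions of $\pi$ supplied in the paragraphs immediately preceding it, so the plan is to just compute $\dim \pi = [B^\times : J]\cdot \dim \Lambda$ in each case, using the standard filtration of $\calO_B^\times$ by the $\calU^k$ together with the intersection formula $\calU^k\cap\frako_E^\times = \fraku_E^{\lceil ke/2\rceil}$ recalled earlier.

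For the odd-conductor case $c(\pi)=2n+1$, I would start from $\pi\simeq\Ind_J^{B^\times}\Lambda$ with $J=E^\times \calU^n$, $E/F$ ramified, and $\Lambda$ a character. Because $E/F$ is ramified, the reduced norm matches $v_E$ on $E^\times$, so $v_B(E^\times)=\Z$, hence $E^\times\calO_B^\times = B^\times$ and $[B^\times:J] = [\calO_B^\times:\frako_E^\times\calU^n]$. The filtration gives $|\calO_B^\times/\calU^n| = (q^2-1)q^{2(n-1)}$; on the $\frako_E^\times$ side, $\frako_E^\times\cap\calU^n = \fraku_E^n$ (since $e(E/F)=2$), and the residue field of $E$ has order $q$, so $|\frako_E^\times/\fraku_E^n| = (q-1)q^{n-1}$. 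Dividing yields $[\calO_B^\times:\frako_E^\times\calU^n] = (q+1)q^{n-1}$, which is the claimed dimension.

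For the even-conductor case $c(\pi)=2n$, now $J = E^\times\calU^{n-1}$ with $E/F$ unramified, and $\dim\Lambda$ is $1$ or $q$ as $n$ is odd or even. Since $v_B|_{E^\times}=2v_E$ for unramified $E$, we have $[B^\times:E^\times\calO_B^\times]=2$, so $[B^\times:J] = 2[\calO_B^\times:\frako_E^\times\calU^{n-1}]$. I would then compute $[\calO_B^\times:\frako_E^\times\calU^{n-1}]$ by the same filtration method, using $\frako_E^\times\cap\calU^{n-1} = \fraku_E^{\lceil (n-1)/2\rceil}$ and the unramified residue-field order $q^2$. Splitting on the parity of $n$ (which determines $\lceil (n-1)/2\rceil$), in one case the factor $q$ from $\dim\Lambda$ exactly compensates the missing factor from $[\calO_B^\times:\frako_E^\times\calU^{n-1}]$, and both cases collapse to $\dim\pi = 2q^{n-1}$. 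The $n=1$ case must be verified separately, but there $J=E^\times\calO_B^\times$, $\Lambda$ is a character, and $[B^\times:J]=2$ gives $\dim\pi=2$ directly.

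Nothing here is conceptually delicate; the only point requiring a bit of care is to consistently use $v_B(\alpha)=v_F(N(\alpha))$ when comparing valuations on $E^\times$ with those on $B^\times$, since this is what decides whether $E^\times\calO_B^\times$ equals $B^\times$ or has index $2$, and consequently whether the factor of $2$ in the even case appears. With that observation in hand the lemma is immediate from the arithmetic of the unit filtrations.
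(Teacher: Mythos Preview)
Your proposal is correct and follows essentially the same approach as the paper: the paper's proof consists of the index computations $|B^\times/J| = |\calO_B^\times/\frako_E^\times\calU^n| = q^{n-1}(q+1)$ (odd case) and $|B^\times/J| = 2|\calO_B^\times/\frako_E^\times\calU^{n-1}| = 2q^{2\lfloor(n-1)/2\rfloor}$ times $\dim\Lambda$ (even case), stated in the paragraphs just before the lemma. You have simply filled in the filtration arithmetic behind those indices in a bit more detail, but the method is identical.
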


This is a special case of \cite[Proposition 6.5]{carayol}, which also covers higher
degree division algebras.

\begin{rem} Since the formal degree $d(\pi')$ of a discrete series representation 
$\pi'$ of $\GL_2(F)$, normalized so that $d(\St) = 1$, is simply $\dim \pi$
when $\pi' = \JL(\pi)$, the above lemma gives a formula for the formal degree
of a supercuspidal representation of $\GL_2(F)$ in terms of the conductor of
a minimal twist.  
\end{rem}
 
%
%

\section{Local new and old forms}
\label{sec:local-new}

We keep the local notation of the previous section.  

Let $E/F$ be a quadratic extension, $r \ge 1$, and consider a special order
$\calO = \calO_r(E)$.  Let $\pi$ be a smooth irreducible representation
of $B^\times$ with central character $\omega$.
Let $\Omega$ be a character of
$\calO^\times = \frako_E^\times \calU^{r-1}$ which is trivial on
$\calU^{r-1}$.  Denote by $c_E(\Omega)$ the conductor of $\Omega|_{\frako_E^\times}$.
Necessarily, $\Omega$ is trivial on $\calU^{r-1} \cap E^\times =
\fraku_E^{\lceil e(r-1)/2 \rceil}$ where $e=e(E/F)$, i.e., $c_E(\Omega) \le \lceil e(r-1)/2 \rceil$.
Consider the subspace of $\Omega$-equivariant vectors
\begin{equation}
 \pi^\Omega = \{ v \in \pi : \pi(g)v = \Omega(g) v, \, g \in \calO^\times \}.
\end{equation}
The goal of this section is to determine $\dim \pi^\Omega$.
We first note the following obvious necessary condition for existence of
equivariant vectors.

\begin{lemma} \label{lem:cond-bound}
We have $\pi^\Omega = 0$ unless $\Omega$ and $\omega$
agree on $\frako_F^\times$ and $c(\pi) \le r$.
\end{lemma}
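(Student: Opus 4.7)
My plan is to establish the two necessary conditions separately, using only the defining properties of $\pi^\Omega$, the irreducibility of $\pi$, and the observation that the relevant higher unit group is normal in all of $B^\times$.

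First I would fix a nonzero $v \in \pi^\Omega$ and test the equivariance against the center. For any $z \in \frako_F^\times \subset \frako_E^\times \subset \calO^\times$, the element $z$ is central in $B^\times$, so we have on the one hand $\pi(z)v = \omega(z)v$ and on the other hand $\pi(z)v = \Omega(z)v$. Comparing gives $\omega(z) = \Omega(z)$ for every $z \in \frako_F^\times$, which is the first asserted condition.

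For the bound on the conductor, I would exploit that $\Omega$ is trivial on $\calU^{r-1}$ by hypothesis, so every $v \in \pi^\Omega$ is fixed by $\calU^{r-1}$. The key observation is that $\calU^{r-1} = 1 + \frakP^{r-1}$ is normal in $B^\times$: since $\frakP$ is a two-sided ideal of $\calO_B$, conjugation by any element of $B^\times$ (in particular, by a uniformizer $\varpi_B$) stabilizes $\frakP^{r-1}$ and hence $\calU^{r-1}$. Therefore the subspace $\pi^{\calU^{r-1}}$ of $\calU^{r-1}$-fixed vectors is $B^\times$-stable. Because it contains the nonzero vector $v$ and $\pi$ is irreducible, we conclude $\pi^{\calU^{r-1}} = \pi$, i.e., $\calU^{r-1} \subset \ker \pi$. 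By the definition of $c(\pi)$ recalled in \cref{sec:loc-rep} (the minimal $n \ge 1$ with $\pi$ trivial on $\calU^{n-1}$), this gives $c(\pi) \le r$.

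There is no real obstacle: the argument is a standard Schur/irreducibility type argument. The only point worth flagging is to make sure the reader sees why $\calU^{r-1}$ is normal in the full group $B^\times$ and not merely in $\calO_B^\times$; this normality is what promotes the local piece of information ``$v$ is $\calU^{r-1}$-fixed'' to the global statement ``$\pi$ is trivial on $\calU^{r-1}$'' via irreducibility.
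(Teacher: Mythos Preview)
Your proof is correct and follows essentially the same approach as the paper: the compatibility on $\frako_F^\times$ comes from the center acting by $\omega$, and the conductor bound comes from the normality of $\calU^{r-1}$ in $B^\times$ together with irreducibility. The paper's argument is just a terser version of yours, phrased via the contrapositive.
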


\begin{proof} Compatibility of $\Omega$ and $\omega$ is obviously necessary
for $\pi^\Omega \ne 0$.  If $c(\pi) > r$, then $\pi$ is nontrivial on
$\calU^{r-1}$.  Then irreducibility implies $\pi$ cannot have any 
vectors fixed by the normal subgroup $\calU^{r-1}$.
\end{proof}

From now on we assume $\Omega|_{\frako_F^\times} = \omega|_{\frako_F^\times}$.
If we also have $c(\pi) \le r$, then we simply have
\begin{equation} \label{eq:piOm2}
 \pi^\Omega = \{ v \in \pi : \pi(t) = \Omega(t) v, \, t \in \frako_E^\times \}.
\end{equation}

We also have the following easy bound on dimension.

\begin{lemma} \label{lem32}
We have $\dim \pi^{\Omega} \le e(E/F)$.
\end{lemma}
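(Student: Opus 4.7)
The plan is to restrict $\pi$ to the commutative subgroup $E^\times \subset B^\times$ and invoke the Tunnell--Saito multiplicity-one theorem for the pair $(B^\times, E^\times)$: every character of $E^\times$ appears in $\pi|_{E^\times}$ with multiplicity at most one.

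By \cref{lem:cond-bound} I may assume $c(\pi) \le r$ and $\Omega|_{\frako_F^\times} = \omega|_{\frako_F^\times}$, so that $\pi^\Omega$ is described by \eqref{eq:piOm2}. The first step is to check that $\pi^\Omega$ is $\pi(E^\times)$-stable. For $x \in E^\times$ and $g = th \in \calO^\times$ with $t \in \frako_E^\times$ and $h \in \calU^{r-1}$, commutativity of $E$ gives $x t x^{-1} = t$, while $\calU^{r-1} = 1 + \frakP^{r-1}$ is normal in $B^\times$ since $\frakP^{r-1}$ is a two-sided ideal of $\calO_B$. Because $\Omega|_{\calU^{r-1}} = 1$, conjugation by $x$ fixes $\Omega$, which gives the claimed stability.

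Next I view $\pi^\Omega$ as a finite-dimensional representation of $E^\times$. Since $\varpi_E^{e(E/F)} \in \varpi_F \cdot \frako_E^\times$, the operator $A = \pi(\varpi_E)$ satisfies $A^{e(E/F)} = \zeta \cdot \mathrm{id}$ on $\pi^\Omega$ for some nonzero scalar $\zeta$ determined by $\omega$ and $\Omega$. Thus $A$ is diagonalizable on $\pi^\Omega$ with at most $e(E/F)$ distinct eigenvalues, so $\pi^\Omega$ splits as a sum of at most $e(E/F)$ isotypic components, each for a character $\chi$ of $E^\times$ with $\chi|_{\frako_E^\times} = \Omega|_{\frako_E^\times}$ and $\chi|_{F^\times} = \omega$. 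Applying Tunnell--Saito multiplicity one to each such $\chi$, every component is at most one-dimensional, yielding $\dim \pi^\Omega \le e(E/F)$.

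The main obstacle is really the multiplicity-one input: trivial when $\dim \pi = 1$ (then $\pi^\Omega$ is either $0$ or all of $\pi$), but essential in the higher-dimensional case. Everything else is formal manipulation with the structure of $E^\times$ and the finite-dimensionality of $\pi^\Omega$.
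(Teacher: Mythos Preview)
Your proof is correct and follows essentially the same approach as the paper: reduce to \eqref{eq:piOm2}, restrict to $E^\times$, and use Tunnell--Saito multiplicity one together with the count of characters $\chi$ of $E^\times$ satisfying $\chi|_{\frako_E^\times}=\Omega|_{\frako_E^\times}$ and $\chi|_{F^\times}=\omega$. The only cosmetic difference is that the paper counts such $\chi$ directly by cases (unramified: $E^\times=F^\times\frako_E^\times$ forces uniqueness; ramified: two choices of $\chi(\varpi_E)$), whereas you package the same count uniformly via the relation $\varpi_E^{e(E/F)}\in\varpi_F\,\frako_E^\times$ and the eigenvalues of $\pi(\varpi_E)$.
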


\begin{proof} It is well-known that multiplicity one holds for $(B^\times, E^\times)$,
meaning in the irreducible decomposition of $\pi|_{E^\times}$, each character $\chi$ of 
$E^\times$ occurs with multiplicity $m(\pi, \chi)$ at most one.  Furthermore, each $\chi$ appearing
must satisfy $\chi|_{F^\times} = \omega$.

We may as well assume $c(\pi) \le r$, so then we have \eqref{eq:piOm2}.  Consequently
$\pi|_{E^\times}$ acting on the subspace $\pi^\Omega$ is simply the sum over
characters $\chi$ such that $\chi|_{\frako_E^\times} = \Omega|_{\frako_E^\times}$
and $m(\pi,\chi) = 1$.  If $E/F$ is unramified, $F^\times \frako_E^\times = E^\times$
so the compatibility with $\omega$ and $\Omega$ determines $\chi$ uniquely.
If $E/F$ is ramified, there are two possible $\chi$ which are compatible with both
$\omega$ and $\Omega$---these are determined by choosing $\chi(\varpi_E)$
so that $\chi(\varpi_E)^2 = \omega(\varpi)$, 
assuming $\varpi_E$ is chosen to square to $\varpi$.
\end{proof}

Let $\chi$ be a character of $E^\times$ which is compatible with
$\omega$ and $\Omega$ as in the above proof.  By Tunnell \cite{tunnell:eps}
and Saito \cite{saito}, we know $\chi$ occurs in $\pi|_{E^\times}$
(and thus contributes a line to $\pi^\Omega$ if $c(\pi) \le r$) if and only if 
$\eps(1/2,\pi_E \otimes \chi) = -\omega(-1)$, where $\pi_E$ denotes the base
change of $\pi$ to $(B \otimes E)^\times \simeq \GL_2(E)$.  Hence we have the formula
\begin{equation} \label{eq:eps-form}
 \dim \pi^\Omega = - {\omega(-1)} \sum_{i=1}^{e(E/F)} 
\frac{\eps(1/2, \pi_E \otimes \chi_i) - \omega(-1)}2
\end{equation}
when $c(\pi) \le r$ and $\chi_i$ runs over compatible characters $\chi$.

The local root numbers $\eps(1/2,\pi_E \otimes \chi)$ were calculated in \cite{tunnell:eps}
when $\pi'$ is a dihedral supercuspidal representation.  (In the case of odd residual
characteristic, all supercuspidals $\pi'$ are dihedral---\cite{saito}
reproved Tunnell's main result
without computing local root numbers in a way that also works in even characteristic.)
For minimal representations,
there 4 basic situations, according to whether the level of $\pi$ is odd or even
(i.e., the inducing subgroup $J=J_\pi$ for $\pi$ as in \cref{sec:loc-rep} contains a ramified or unramified quadratic extension) and whether
$E/F$ is unramified or ramified.  Half of the time (when the quadratic
extension contained in $J_\pi$ has opposite ramification type as $E/F$) the description of
the characters in $\pi|_{E^\times}$ is simple and depends only on conductors (and
compatibility with $\omega$), and thus $\eps(1/2, \pi_E\otimes \chi)$ is easily
described.  However, the other half of the time, the description of $\eps(1/2, 
\pi_E \otimes \chi)$ is complicated.

So instead of trying to use \eqref{eq:eps-form} to compute $\dim \pi^\Omega$,
we will examine $\pi|_{\frako_E^\times}$ directly using the description of $\pi$
as $\Ind_J^{B^\times} \Lambda$ for suitable $J$, $\Lambda$.  Things are somewhat
simplified by the fact that for our applications we do not need to consider
arbitrary ramification of $\Omega$ (and to some extent $\omega$).  In 
addition,
it seems that the description of $\pi|_{E^\times}$ is simpler when given in terms of the inducing data $(J,\Lambda)$
rather than the description of $\pi'$ as dihedrally induced.  (It also has the advantage
of being applicable when $\pi'$ is not dihedral.)  Hence this approach provides an alternate
way to compute root numbers $\eps(1/2, \pi_E \otimes \chi)$ via Mackey theory.
This is similar in spirit to the use of Mackey theory on $\GL_2(F)$ in \cite[Section 5]{FMP}, 
though the main goal there was determination of test vectors.
  That said, we will 
focus on computing $\dim \pi^\Omega$ when the description is simple enough to give
clean global statements (essentially, when it only depends upon conductors),
but see \cref{rem:eps-E-unram} for more discussion about this.

For 1-dimensional representations, the following is clear.

\begin{lemma} \label{lem:1-d}
Suppose $\pi = \mu \circ N_{B/F}$ and $\calO = \calO_r(E)$.  
Then $\pi^\Omega \ne 0$
if and only if $c(\pi) \le r$  
 and $\Omega|_{\frako_E^\times} = (\mu \circ N_{E/F}) |_ 
{\frako_E^\times}$.
\end{lemma}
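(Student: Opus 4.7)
The plan is to exploit the fact that $\pi$ is 1-dimensional: every vector in $\pi$ is already a simultaneous eigenvector for the action of $B^\times$, with $\pi(g)$ acting by the scalar $\mu(N_{B/F}(g))$. Consequently $\pi^\Omega$ is either all of $\pi$ (a single line) or zero, and $\pi^\Omega \ne 0$ is equivalent to the equality of characters
\[
(\mu \circ N_{B/F})\big|_{\calO^\times} = \Omega
\]
on the group $\calO^\times = \frako_E^\times \cdot \calU^{r-1}$. So the proof reduces to separately checking the two factors of this product.

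On the $\frako_E^\times$ factor, the reduced norm $N_{B/F}$ restricts to the field norm $N_{E/F}$, so the required equality is literally the condition $\Omega|_{\frako_E^\times} = (\mu \circ N_{E/F})|_{\frako_E^\times}$ in the statement. On the $\calU^{r-1}$ factor, $\Omega$ is trivial by hypothesis, so we need $\mu \circ N_{B/F}$ to be trivial on $\calU^{r-1}$, i.e.\ $\pi$ to be trivial on $\calU^{r-1}$. By the analysis of 1-dimensional representations in Section 2.2 (using $N(\calU^n) = \fraku^{\lceil n/2 \rceil}$ to get $\ell(\pi) = \max\{2c(\mu)-2,0\}$ and $c(\pi) = \max\{2c(\mu),1\}$), the representation $\pi$ is trivial on $\calU^{c(\pi)-1}$ and nontrivial on $\calU^{c(\pi)-2}$ whenever $c(\pi) \ge 2$; in all cases triviality on $\calU^{r-1}$ is equivalent to $c(\pi) \le r$.

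Combining these two factor-wise equivalences gives the biconditional in the statement. There is no real obstacle here: the lemma is essentially a direct unpacking of definitions together with the conductor calculation already carried out in Section 2.2. The only mild subtlety is the borderline case $c(\pi) = 1$, where $\mu$ is unramified and $\pi$ is already trivial on $\calU^0 \supset \calU^{r-1}$, so the condition $c(\pi) \le r$ (which always holds for $r \ge 1$) is automatically satisfied and the lemma still holds as stated.
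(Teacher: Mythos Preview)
Your proof is correct and matches the paper's approach: the paper simply declares the lemma ``clear'' without giving any argument, and your proposal is precisely the routine unpacking of definitions that justifies this. The only point worth noting is that the equivalence ``$\pi$ trivial on $\calU^{r-1}$ $\Longleftrightarrow$ $c(\pi) \le r$'' follows directly from the alternative definition of $c(\pi)$ in Section~2.2 as the minimal $n \ge 1$ with $\pi$ trivial on $\calU^{n-1}$, so you need not even invoke the explicit formula $c(\pi) = \max\{2c(\mu),1\}$.
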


Next we observe non-minimal representations often do not have
$\Omega$-equivariant vectors.

\begin{prop} \label{prop:loc-non-min}
Suppose $\pi$ is non-minimal, $\dim \pi > 1$,
and write $\pi = \tau \otimes \mu$ where $\tau$ is minimal.  Let $t=t(E/F)$.
Then $\pi^\Omega = 0$ if
one of the following conditions is satisfied:

\begin{enumerate}[(i)]

\item $E/F$ is unramified and $c_E(\Omega) < \frac{c(\pi)}2$; or

\item $E/F$ is ramified, $q$ is odd, and $c_E(\Omega) < c(\pi) - 1$; or

\item $E/F$ is ramified,  $c_E(\Omega) < c(\pi) - t - 1$,
$c(\pi) \ge 2t+4$, and $c(\pi) - c(\tau) > t$.

\end{enumerate}
\end{prop}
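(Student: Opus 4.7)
Assume toward contradiction that $\pi^\Omega \ne 0$. By \cref{lem:cond-bound} we must have $c(\pi) \le r$, so by the argument in the proof of \cref{lem32} there must be a character $\chi$ of $E^\times$ appearing in $\pi|_{E^\times}$ with $\chi|_{\frako_E^\times} = \Omega|_{\frako_E^\times}$; in particular $c_E(\chi) = c_E(\Omega)$. Writing $\pi = \tau \otimes \mu$ with $\tau$ minimal, such a $\chi$ appears in $\pi|_{E^\times}$ if and only if $\chi_\tau := \chi \cdot (\mu \circ N_{E/F})^{-1}$ appears in $\tau|_{E^\times}$. The plan is to leverage ultrametric inequalities on conductors to force $c_E(\chi)$ to equal a specific value that exceeds the upper bound on $c_E(\Omega)$ given by each of the hypotheses.

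Next I would bound the two relevant conductors. Since $\tau$ is trivial on $\calU^{c(\tau)-1}$, the character $\chi_\tau$ is trivial on $\calU^{c(\tau)-1} \cap \frako_E^\times = \fraku_E^{\lceil (c(\tau)-1)e/2 \rceil}$ with $e = e(E/F)$, so $c_E(\chi_\tau) \le \lceil (c(\tau)-1)/2 \rceil$ when $E/F$ is unramified and $c_E(\chi_\tau) \le c(\tau)-1$ when $E/F$ is ramified. Applying the norm formulas from the end of \cref{sec:loc-rep} (in particular \eqref{eq:norm-ram}) gives $c_E(\mu \circ N_{E/F}) = c(\mu)$ for $E/F$ unramified and $c_E(\mu \circ N_{E/F}) = 2c(\mu) - t - 1$ for $E/F$ ramified, provided $c(\mu) \ge t+1$. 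Non-minimality together with $\tau$ minimal of dimension $>1$ forces $c(\pi) = 2c(\mu) > c(\tau)$, by the formula $c(\tau \otimes \mu) = \max\{c(\tau),2c(\mu)\}$ recalled in \cref{sec:loc-rep}. The ultrametric property of conductors then says that whenever $c_E(\chi_\tau) < c_E(\mu \circ N_{E/F})$, we must have $c_E(\chi) = c_E(\mu \circ N_{E/F})$.

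It therefore suffices to verify the strict inequality $c_E(\chi_\tau) < c_E(\mu \circ N_{E/F})$ in each case. For (i), $c(\tau) \le 2c(\mu) - 1$ yields $c_E(\chi_\tau) \le c(\mu) - 1 < c(\mu)$, so $c_E(\Omega) = c(\pi)/2$, contradicting the hypothesis. For (ii), with $t = 0$, the bound $c_E(\chi_\tau) \le c(\tau) - 1 \le 2c(\mu) - 2 < 2c(\mu) - 1$ forces $c_E(\Omega) = c(\pi) - 1$, again a contradiction. For (iii), the hypothesis $c(\pi) \ge 2t+4$ secures $c(\mu) \ge t+2$ so that the norm-conductor formula applies and $c_E(\mu \circ N_{E/F}) = c(\pi) - t - 1$, while the hypothesis $c(\pi) - c(\tau) > t$ yields $c_E(\chi_\tau) \le c(\tau) - 1 < c(\pi) - t - 1$, forcing $c_E(\Omega) = c(\pi) - t - 1$ and the desired contradiction. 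The main obstacle is bookkeeping rather than ideas: one has to juggle the several conductor formulas for $\mu \circ N_{E/F}$ in the ramified case and confirm, in each of the three cases, that the stated hypothesis is exactly what makes the strict ultrametric inequality go through.
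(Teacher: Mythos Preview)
Your proof is correct and follows essentially the same approach as the paper: both arguments reduce to showing, via the ultrametric inequality for conductors, that $c_E(\Omega \cdot (\mu \circ N_{E/F})^{-1})$ exceeds the bound $\lceil e(c(\tau)-1)/2 \rceil$ imposed by $\tau$ being trivial on $\calU^{c(\tau)-1}$; the paper simply phrases this more directly as $\pi^\Omega = \tau^{\Omega \otimes \mu^{-1}}$ rather than passing through characters of $E^\times$. One small slip: the norm-conductor formula $c_E(\mu \circ N_{E/F}) = 2c(\mu) - t - 1$ from \eqref{eq:norm-ram} actually requires $c(\mu) \ge t+2$ (not $t+1$), but this stronger bound does hold in both (ii) and (iii) since $\dim \tau > 1$ gives $c(\tau) \ge 2$, hence $c(\pi) \ge 4$, and in (iii) you already invoke $c(\pi) \ge 2t+4$.
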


\begin{proof}
Recall  $c(\pi) = 2 c(\mu) > c(\tau)$.
Note that $\pi^\Omega = \tau^{\Omega \otimes \mu^{-1}}$, where
$\Omega \otimes \lambda = (\lambda \circ N_{B/F})|_{\calO^\times} \cdot \Omega$.
So for $\pi^\Omega \ne 0$ we need that $\Omega \otimes \mu^{-1}$ is trivial 
on $\calU^{c(\tau)-1} \subset \ker \tau$.  Hence it suffices to show that any
of the above conditions imply that $c_E(\Omega \otimes \mu^{-1})$
 is strictly bigger than $\lceil e(c(\tau) - 1)/2 \rceil$.

First, suppose $E/F$ is unramified.  Note $c(\mu^{-1} \circ N_{E/F}) = c(\mu)$.
So if $c_E(\Omega) < c(\mu)$,
then $c_E(\Omega \otimes \mu^{-1}) = c (\mu \circ N_{E/F}) = \frac{c(\pi)}2 >
\frac{c(\tau)}2 \ge \lceil (c(\tau) - 1)/2 \rceil$.

Now suppose $E/F$ is ramified.  Since $\dim \pi > 1$ and $\pi$ non-minimal
implies $c(\pi) \ge 4$, (ii) follows from (iii), so assume (iii) holds.  
By \eqref{eq:norm-ram}, $c(\mu^{-1} \circ N_{E/F}) =
2 c(\mu) - t - 1$ if $c(\mu) \ge t + 2$, which is equivalent to our assumption
$c(\pi) \ge 2t+4$.  Then the conditions on $c_E(\Omega)$ and $c(\pi) - c(\tau)$ imply
$c_E(\Omega \otimes \mu^{-1}) = c_E(\mu^{-1} \circ N_{E/F}) > c(\tau) - 1$.
\end{proof}

\begin{rem} 
The conditions in the lemma are in fact necessary.
For instance, suppose $E/F$ is unramified, and $\omega_\tau = 1$
and $c(\tau)$ is odd.  If we take $\Omega = \mu \circ N |_{\calO^\times}$,
then $\pi^\Omega = \tau^{\calO^\times}$, 
which we will see is 1-dimensional in \cref{thm:34}.

Moreover, we really do need the more complicated condition (iii)---that 
$\pi$ be sufficiently more ramified than $\tau$---when
$E/F$ is ramified and $q$ is even.  For instance, the 
global calculations in \cite[Examples 10.6, 10.7]{HPS:mams}
imply $\pi^\Omega$ can be nonzero when $E$ is a ramified quadratic
extension of $F=\Q_2$, $\Omega = 1$, $c(\pi) = 6$, and $c(\tau) = 5$.
\end{rem}

We say $\Omega$ is a minimally ramified extension of $\omega|_{\frako_F^\times}$
to $\calO^\times$ if $c_E(\Omega) = c(\omega)$ when $E/F$ is unramified and
$c_E(\Omega) = \max \{ 2c(\omega)-1, 0 \}$ when $E/F$ is ramified. 
It is easy to see that a smaller value of $c_E(\Omega)$ is not possible.
We will explain in \cref{sec:qmfs} how to construct a minimally
ramified extension $\Omega$ of $\omega$ under suitable bounds on $c(\omega)$.

\begin{thm} \label{thm:34}
Suppose $E/F$ is unramified, $\calO = \calO_{2r+1}(E)$,
$\pi$ is minimal, and $\Omega$ is a minimally ramified extension
of $\omega|_{\frako_F^\times}$.
Then $\pi^\Omega = 0$ unless $c(\pi) \le 2r+1$ and one of the following holds:

\begin{enumerate}[(i)]

\item $\dim \pi = 1$, in which case $\dim \pi^\Omega = 1$; or

\item $\dim \pi > 1$ and $c(\pi)$ is odd, in which case $\dim \pi^\Omega = 1$; or

\item $c(\pi)$ is even and $c(\omega) = \frac{c(\pi)}2$.

\end{enumerate}
\end{thm}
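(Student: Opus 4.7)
The strategy is to reduce to a multiplicity computation $m(\pi, \chi)$ using the preceding lemmas, then handle the two subcases of $\dim \pi > 1$ via the Bushnell--Henniart description of $\pi$ from \cref{sec:loc-rep}.

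Case (i) is immediate from \cref{lem:1-d}. For $\dim \pi > 1$ with $\pi$ minimal, \cref{lem:cond-bound} forces $c(\pi) \le 2r+1$, and \cref{lem32} with $e(E/F) = 1$ gives $\dim \pi^\Omega \le 1$. Since $E^\times = F^\times \frako_E^\times$ when $E/F$ is unramified, the proof of \cref{lem32} in fact yields $\dim \pi^\Omega = m(\pi, \chi)$, where $\chi$ is the unique character of $E^\times$ with $\chi|_{F^\times} = \omega$ and $\chi|_{\frako_E^\times} = \Omega|_{\frako_E^\times}$; minimal ramification forces $c_E(\chi) = c(\omega)$. Everything reduces to determining when $\chi$ occurs in $\pi|_{E^\times}$.

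For case (ii), with $c(\pi) = 2n+1$ odd, a pigeonhole argument suffices. Since $\calU^{2n} \subset \ker \pi$, any character $\chi'$ of $E^\times$ appearing in $\pi|_{E^\times}$ has $c_E(\chi') \le n$ and $\chi'|_{F^\times} = \omega$. The number of such $\chi'$ equals
\[
[\frako_E^\times / \fraku_E^n : \frako_F^\times / \fraku_F^n] = (q+1) q^{n-1},
\]
which matches $\dim \pi$ by \cref{lem:dimpi}. Tunnell--Saito multiplicity one ($m(\pi, \chi') \le 1$) then forces each such $\chi'$ to occur exactly once, and in particular $m(\pi, \chi) = 1$.

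For the even case $c(\pi) = 2n$ with $\pi$ minimal, the dimension count alone is inconclusive, since $\dim \pi = 2q^{n-1}$ is strictly less than the total $(q+1)q^{n-1}$ of compatible characters with $c_E \le n$. Here I would use $\pi \simeq \Ind_J^{B^\times} \Lambda$ with $J = E^\times \calU^{n-1}$ and decompose $\pi|_{\frako_E^\times}$ via Mackey. The trivial double coset contributes $\Lambda|_{\frako_E^\times}$, a sum of characters of conductor $c_E = n$ by the structure of $\Lambda$ recorded in \cref{sec:loc-rep}. The non-trivial cosets may a priori contribute characters of smaller $c_E$, but the \emph{minimally ramified} character $\chi$ (with $c_E(\chi) = c(\omega) < n$) can be shown not to occur---either by directly analyzing $\Lambda^\gamma$ restricted to the intersections $\frako_E^\times \cap \gamma^{-1}J\gamma$ for $\gamma \notin J$, or alternatively via \eqref{eq:eps-form} combined with an unramified base-change identity that pins down $\eps(1/2, \pi_E \otimes \chi)$ to the sign opposite to appearance. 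The main obstacle lies in this even-case analysis, which requires delicate Mackey bookkeeping over cosets moving $E^\times$ to other quadratic subfields, or else careful $\eps$-factor sign tracking (especially in residual characteristic $2$, where dihedral/nondihedral distinctions complicate the base change).
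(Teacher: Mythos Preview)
Your treatment of cases (i) and (ii) is essentially identical to the paper's: the 1-dimensional case is immediate, and the odd-conductor case is exactly the counting/multiplicity-one argument you give (this argument already appears in Tunnell, as the paper notes).

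The gap is in case (iii). You correctly set up the Mackey decomposition of $\pi|_{\frako_E^\times}$ for $\pi \simeq \Ind_J^{B^\times}\Lambda$ with $J = E^\times \calU^{n-1}$, and you correctly observe that the trivial coset contributes only characters of conductor $n$. But you stop short of analyzing the nontrivial cosets, and this is where the actual content lies. The paper completes this by an explicit matrix calculation: realizing $B$ as $\{\left(\begin{smallmatrix} x & \varpi y \\ \bar y & \bar x \end{smallmatrix}\right) : x,y \in E\}$, writing down explicit coset representatives $s$ for $\frako_E^\times \backslash B^\times / J$, and computing $s \left(\begin{smallmatrix} t & \\ & \bar t \end{smallmatrix}\right) s^{-1}$ directly. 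The key point is that the off-diagonal correction is proportional to $t - \bar t$, which lies in $\frakp_E^{n-1}$ when $t \in \fraku_E^{n-1}$; consequently $\Lambda^s|_{\fraku_E^{n-1}}$ agrees with $\Lambda|_{\fraku_E^{n-1}}$ or its Galois conjugate, and in particular is nontrivial. Since $\fraku_E^{n-1} \subset J^s$ for every $s$, this forces every character of $\frako_E^\times$ appearing in any $\Ind_{J^s}^{\frako_E^\times}\Lambda^s$ to have conductor exactly $n$. Hence $\pi^\Omega = 0$ unless $c_E(\Omega) = c(\omega) = n$.

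Your proposed alternative via \eqref{eq:eps-form} and an $\eps$-factor identity is not available here: the relevant identity (that $\eps(1/2,\pi_E\otimes\chi) = (-1)^{c(\pi)}$ for $E/F$ unramified, $\pi$ minimal, $c(\chi) < c(\pi)/2$) is what the theorem \emph{proves}---see \cref{rem:gross}. Gross's result covers only unramified $\omega$, so invoking it would leave the general case open. The Mackey route is the one that works, and the paper carries it out in full rather than leaving it as a sketch.
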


\begin{proof}
By \cref{lem:cond-bound}, we may assume $c(\pi) \le 2r+1$.
Then by  \eqref{eq:piOm2} and \cref{lem32}, 
$\dim \pi^\Omega = 1$ if $\Omega|_{\frako_E^\times}$ appears in 
$\pi|_{\frako_E^\times}$, and $\pi^\Omega = 0$ otherwise.

If $\dim \pi = 1$, then $\pi$ minimal means $\pi = \mu \circ N$ with
$\mu$ is unramified (so $\omega$  and thus $\Omega$ are unramified), and the statement is clear.  So assume $\dim \pi > 1$.  Let $\chi$ denote a character of $E^\times$ whose restriction to $F^\times$
is $\omega$.

First suppose $c(\pi) = 2n+1$, so $\dim \pi = q^{n-1}(q+1)$ by \cref{lem:dimpi}.
Note for $\chi$ to appear in $\pi|_{E^\times}$, we need $\chi$ to be trivial on
$\calU^{2n} \cap E^\times = \fraku_E^n$.  But since the number of $\chi$
compatible with $\omega$ such that $c(\chi) \le n$ is $q^{n-1}(q+1)$, by multiplicity
one we see that $\chi$ appears in $\pi$ if and only if $c(\chi) \le n$.  (This
argument is already in \cite{tunnell:eps}.)  Hence $\dim \pi^\Omega = 1$.

Now suppose $c(\pi) = 2n$.  Then we can write $\pi$ in the form
$\Ind_J^{B^\times} \Lambda$, where $J = E^\times \calU^{n-1}$, for a suitable
embedding of $E$ into $B$ and representation $\Lambda$ of $J$.
For $s \in B^\times $, put $J^s = sJs^{-1} \cap \frako_E^\times$
and $\Lambda^s(x) = \Lambda(s^{-1} x s)$.  By Mackey theory, we have
\begin{equation}
\label{eq:mackey}
 \pi|_{\frako_E^\times} = \bigoplus_{s \in \frako_E^\times \bs B^\times / J}
\Ind_{J^s}^{\frako_E^\times} \Lambda^s.
\end{equation}
Since $\calU^{n-1} \subset sJs^{-1}$ for any $s \in B^\times$, we have
$\calU^n \cap \frako_E^\times = \fraku_E^{\lceil n/2 \rceil}$ is contained in $J^s$.
In fact since $\frako_E^\times \calU^{n-1} = \frako_E^\times \calU^{n-2}$ when $n$ is
even, we have $\fraku_E^{m/2} \subset J^s$ where $m = n-2$ if $n$ is even
and $m = n-1$ if $n$ is odd.  Note $m$ is minimal such that $J = \frako_E^\times
\calU^m$.

Write 
\[ B = \{ \bmx x & \varpi y \\ \bar y & \bar x \emx : x, y \in E \}. \]
Thus $E^\times$ embeds diagonally in 
$B^\times$ and $\bmx 0 &\varpi \\ 1 & 0 \emx$ is a uniformizer for $B$. 
Also, $\calO_B$ is the set of elements of the above form with $x, y \in \frako_E$,
and $\calU^{2j}$ is the set of elements of the above form with $x \in 1 + \frakp_E^{j}$,
$y \in \frakp_E^{j}$ for any $j \ge 0$.
Hence, we can take a set of representatives for $B^\times / \frako_E^\times
\calU^m$ to be
\[ \{ \bmx 1 & \varpi y \\ \bar y & 1 \emx:  y \in \frako_E/\frakp_E^{m/2} \}
\cup \{ \bmx 1 & \varpi y \\ \bar y & 1 \emx \bmx 0 &\varpi \\ 1 & 0 \emx  :  y \in \frako_E/\frakp_E^{m/2-1} \}.
\]
(If $m=0$, we interpret $y \in \frako_E/\frakp_E^{-1}$ to mean $y=0$.)
It is easy to see this is also a set of representatives for $\frako_E^\times \bs B^\times / J$.

If $s =  \bmx 1 & \varpi y \\ \bar y & 1 \emx$, and $t \in E^\times$, we compute
\[ s \bmx t & \\ & \bar t \emx s^{-1} =
\frac 1{1-\varpi y \bar y} \bmx t- \varpi \bar t y \bar y & \varpi y( \bar t -  t) \\
\bar y(t - \bar t ) & \bar t - \varpi t y \bar y  \emx 
= \bmx t & \\ & \bar t \emx + \frac{(t- \bar t) y}{1-\varpi y \bar y} \bmx \varpi \bar y & -\varpi \\
-1 & \varpi y \emx. \]
If $s = \bmx 0 &\varpi \\ 1 & 0 \emx \bmx 1 & \varpi y \\ \bar y & 1 \emx$, the calculation
is the same as above with only the effect of exchanging $t$ and $\bar t$ on the right.

Since $\psi_\alpha$ is nontrivial on $\fraku_E^{n-1}$,
we see $\Lambda|_{\frako_E^\times}$ is a sum of characters which are nontrivial on 
$\fraku_E^{n-1}$ but trivial on $\fraku_E^{n}$.  
Note $t - \bar t \in \frakp^j$ for $t \in \fraku_E^j$. 
So the above calculation means 
either $\Lambda^s(1+x) = \Lambda(1+x)$ or $\Lambda^s(1+x) = \Lambda(1+\bar x)$ 
for $x \in \frakp_E^{n-1}$.  Hence 
any character appearing in $\pi|_{E^\times}$ has conductor $n$,
whence $\pi^{\Omega}$ can only be nonzero if $c_E(\Omega) = c(\omega) = n$.
\end{proof}

\begin{rem} \label{rem:gross}
Gross \cite{gross} calculated $\eps(1/2, \pi_E \otimes \chi) = (-1)^{c(\pi)}$
when $E/F$, $\chi$ and $\omega$ are all unramified.  This combined with
\eqref{eq:eps-form} gives an alternative proof of the above theorem in the case that $\omega$ is unramified (so $\Omega = 1$).  Conversely, our theorem
says that $\eps(1/2, \pi_E \otimes \chi) = (-1)^{c(\pi)}$ when $E/F$
is unramified, $\pi$ is minimal and $c(\chi) < \frac{c(\pi)}2$.
\end{rem}

\begin{rem} \label{rem:eps-E-unram}
The only case of the theorem where things are not completely settled
is (iii).  Suppose $c(\pi) = 2n$ and $\Lambda$ as in the proof.  Then 
$\Lambda|_{\fraku_E^{n-1}} \simeq c \lambda$ for some nontrivial character
$\lambda$ of $\fraku_E^{n-1}/\fraku_E^n$.  The proof shows that any $\chi$
appearing in $\pi|_{E^\times}$ agrees with $\lambda$ or $\tilde \lambda$ on $\fraku_E^{n-1}$,
and such $\chi$ must also restrict to $\omega$ on $F^\times$, where
$\tilde \lambda(x) = \lambda(\bar x)$.  There are 2 such characters
if $n=1$, and $2(q+1)q^{n-2}$ if $n \ge 2$.  On the other hand $\dim \pi = 2q^{n-1}$.
So when $n=1$ this completely characterizes the characters appearing in $\pi|_{E^\times}$
(namely $\lambda$ and $\tilde \lambda$), and when $n \ge 2$ it ``almost'' does.  
This supports the idea that $\eps(1/2, \pi_E \otimes \chi)$ may
 often have a simple description in terms of the inducing data $(J,\Lambda)$.
\end{rem}

\begin{thm} \label{thm:35} Suppose $E/F$ is ramified, $\calO = \calO_{2r}(E)$
for $r \ge 1$,
$\pi$ is minimal, and $\Omega$ is a minimally ramified extension
of $\omega|_{\frako_F^\times}$.
Then $\pi^\Omega = 0$ unless $c(\pi) \le 2r$, in which case we have:

\begin{enumerate}[(i)]
\item $\dim \pi^\Omega = 1$ if $\dim \pi = 1$; 

\item $\dim \pi^\Omega = 2$ if $\dim \pi > 1$ and $c(\pi)$ is even.
\end{enumerate}
\end{thm}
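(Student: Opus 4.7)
My plan is to proceed by Mackey theory in direct analogy with the proof of \cref{thm:34}. By \cref{lem:cond-bound} we may assume $c(\pi) \leq 2r$, so \eqref{eq:piOm2} and \cref{lem32} together give $\dim \pi^\Omega \leq 2$, and it suffices to exhibit two characters $\chi_1, \chi_2$ of $E^\times$ compatible with both $\omega$ and $\Omega$ appearing in $\pi|_{E^\times}$.

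Case (i) should follow immediately from \cref{lem:1-d}: minimality of $\pi = \mu \circ N$ with $\dim \pi = 1$ forces $c(\mu) = 0$, so $\omega$ and $\Omega$ are trivial on units, and $(\mu \circ N_{E/F})|_{\frako_E^\times} = 1 = \Omega|_{\frako_E^\times}$, so \cref{lem:1-d} yields $\dim \pi^\Omega = 1$.

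For case (ii), write $c(\pi) = 2n$ with $n \geq 1$, so $\dim \pi = 2q^{n-1}$ by \cref{lem:dimpi}. As in \cref{sec:loc-rep}, realize $\pi \simeq \Ind_J^{B^\times} \Lambda$ with $J = E'^\times \calU^{n-1}$, where $E'/F$ is the unramified quadratic extension in $B$ and $\Lambda|_{E'^\times}$ is a sum of characters nontrivial on $\fraku_{E'}^{n-1}$. Embed the ramified $E$ into $B$ using a matrix presentation analogous to that in the proof of \cref{thm:34} but with the roles of the ramified and unramified quadratic subfields reversed, and apply the Mackey decomposition
\[ \pi|_{\frako_E^\times} = \bigoplus_{s \in \frako_E^\times \bs B^\times / J} \Ind_{J^s}^{\frako_E^\times} \Lambda^s. \]
A conjugation calculation, following the style of \cref{thm:34}(iii), should identify the characters of $\frako_E^\times$ appearing in $\pi|_{\frako_E^\times}$ as exactly those compatible with $\omega$ and of conductor at most $c_E(\Omega) = 2n-1$, each with multiplicity $2$. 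Restricting to $\Omega|_{\frako_E^\times}$ then contributes $2$ to $\dim \pi^\Omega$; equivalently, both extensions $\chi_1, \chi_2$ of $\Omega|_{\frako_E^\times}$ to $E^\times$ compatible with $\omega$ (differing in the value at $\varpi_E$) appear in $\pi|_{E^\times}$.

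The main obstacle will be the explicit bookkeeping for the conjugation action and the resulting characters of $\frako_E^\times$. The key structural difference from \cref{thm:34}(iii) is that, for ramified $E/F$, we have $E^\times / F^\times \frako_E^\times \cong \Z/2\Z$, so each character of $\frako_E^\times$ compatible with $\omega$ has two extensions to $E^\times$, whereas in the unramified setting $E^\times = F^\times \frako_E^\times$ leaves no ambiguity. This is precisely the structural feature that should upgrade the dimension from $1$ (\cref{thm:34}(iii)) to $2$ here, but verifying via the Mackey computation that both extensions genuinely appear---and that no constraint from $\omega(-1)$-compatibility in the sense of Tunnell--Saito rules one of them out---will require carrying the conjugation formulas through carefully, and is where the proof will be technically most involved.
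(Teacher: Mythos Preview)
Your outline is correct in principle, but it is far more complicated than necessary, and the paper's proof avoids Mackey theory entirely for part (ii).

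The key observation you are missing is that this is one of the ``easy'' cases in the sense of the discussion preceding \cref{thm:34}: the inducing subgroup $J = E'^\times \calU^{n-1}$ contains the \emph{unramified} quadratic extension $E'$, while the $E$ in question is \emph{ramified}---opposite ramification types. In such cases a pure counting argument (already in Tunnell) suffices, exactly as in the odd-conductor case of \cref{thm:34}. Concretely: since $\ell(\pi) = 2n-2$, any character $\chi$ of $E^\times$ appearing in $\pi|_{E^\times}$ must be trivial on $\calU^{2n-1} \cap E^\times = \fraku_E^{2n-1}$, i.e.\ $c(\chi) \le 2n-1$. The number of such $\chi$ with $\chi|_{F^\times} = \omega$ is $2|\frako_E^\times / \frako_F^\times \fraku_E^{2n-1}| = 2q^{n-1} = \dim \pi$. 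By multiplicity one for $(B^\times, E^\times)$, \emph{every} such $\chi$ appears exactly once. Since $c_E(\Omega) = \max\{2c(\omega)-1,0\} \le 2n-1$, both extensions of $\Omega|_{\frako_E^\times}$ to $E^\times$ compatible with $\omega$ are among these $\chi$, giving $\dim \pi^\Omega = 2$ immediately.

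Your Mackey approach would also work, but it is the tool reserved for the hard case (\cref{thm:34}(iii), same ramification type) where the counting falls short. A minor slip: you wrote ``conductor at most $c_E(\Omega) = 2n-1$'', but $c_E(\Omega)$ is $\max\{2c(\omega)-1,0\}$, not $2n-1$; the bound $2n-1$ comes from $\ell(\pi)$, not from $\Omega$.
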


\begin{proof}
As before, we may assume $c(\pi) \le 2r$, and
let $\chi$ denote a character of $E^\times$ such that $\chi|_{F^\times} = \omega$.  
Again, the case $\dim \pi = 1$ is evident, so assume $\dim \pi > 1$.

First suppose $c(\pi) = 2n$, so $\ell(\pi) = 2n-2$.  For $\chi$ to appear in
$\pi|_{E^\times}$, we need $\chi$ to be trivial on $\calU^{2n-1} \cap \frako_E^\times
= \fraku_E^{2n-1}$, i.e., $c(\chi) \le 2n-1$.  The number of such $\chi$
which are compatible with $\omega$ is $2|\frako_E^\times/\frako_F^\times \fraku_E^{2n-1}|
= 2q^{n-1} = \dim \pi$.  Hence $\pi|_{E^\times}$ is simply the sum of all $\chi$
compatible with $\omega$ such that $c(\chi) \le 2n-1$.  (Again, this argument is
in \cite{tunnell:eps}.)  In particular, there are two such $\chi$ which agree with
$\Omega$ on $\frako_E^\times$, which gives (ii).
\end{proof}

\begin{rem} Globally, over $\Q$, one can compare class number formulas for
special orders from \cite{HPS:mams}
to dimensions of spaces of weight 2 newforms of level $p^{2n+1}$.
This comparison suggests, at least for $q$ odd and $\Omega = 1$,
that if $E/F$ is ramified with $\calO=\calO_{2r}(E)$, then $\dim \pi^\Omega = 1$
when $c(\pi) < 2r$ is odd.  When $q=2$, $\omega = \Omega = 1$ and 
$c(\pi) = 3 < 2r$, \cite[Proposition 3.7]{tunnell:eps} tells us indeed 
$\dim \pi^\Omega = 1$.
\end{rem}

%
%

\section{Quaternionic modular forms and Brandt matrices}
\label{sec:QMFs}

Let $F$ be a totally real number field of degree $d$, with integer ring
$\frako = \frako_F$, adele ring $\A=\A_F$,
and  infinite places $\nu_1, \ldots, \nu_d$.
Let $B$ a (totally) definite quaternion algebra over $F$, 
so each $B_{\nu_i}$ is isomorphic to Hamilton's quaternions $\mathbb H$.
Let $\frakD$ be the discriminant of $B$, which is the product over finite primes
$\frakp$ of $F$ at which $B$ ramifies.
Also fix a finite-order idele class character $\omega = \bigotimes \omega_v$ 
of $\A^\times$.  Then each $\omega_{\nu_i}$ is either trivial or the sign character
$\sgn$.

Let
$L^2(B^\times \bs B^\times(\A), \omega)$ be the space of square integrable (mod center) functions on $B^\times \bs B^\times(\A)$ which transform by $\omega$ on $\A^\times$.
Now $B^\times(\A)$ acts by right translation $R$ on 
$L^2(B^\times \bs B^\times(\A), \omega)$.  As $B^\times(\A)$ is compact mod
center, $R$ breaks up as a direct sum of irreducible representations, which are
the irreducible automorphic representations of $B^\times(\A)$ with central character 
$\omega$.

Consider an irreducible automorphic representation $\pi = \bigotimes \pi_v$
of $B^\times(\A)$ with central character $\omega$.  For $v | \infty$,
fix an embedding of $B_v^\times \simeq \mathbb H^\times$
into $\GL_2(\C)$, and for $k \ge 0$,
let $\Sym^{k}$ denote the composition of this embedding with
$k$-th symmetric power of the standard representation of $\GL_2(\C)$. 
Then any irreducible representation of
$B^\times_v$ is equivalent to some twist of $\Sym^{k_v}$, i.e., of
the form $\Sym^{k_v} \otimes (\lambda_v \circ N_{B_v/F_v})$, 
where $\lambda_v$ is a character
of $\R_{> 0}$, since all characters of $B^\times_v$ factor through the reduced norm.  
Note such a $\pi_v$ has central character $t \mapsto t^{k_{v}} 
\lambda_{v}(t^2)$, which must be $1$ or $\sgn$.  Since $\lambda_v$ is a character
of $\R_{> 0}$, this forces $\lambda_v(t) = t^{-\frac{k_v}2}$, and we see 
$\pi_v$ has central character $\omega_v = \sgn^{k_v}$.
We call $k_v$ the weight of
$\pi_v$, $\bfk = (k_{\nu_1}, \ldots, k_{\nu_d})$ the weight of $\pi$, and $\pi_\infty = 
\bigotimes_{v | \infty} \pi_v$ the infinity type of $\pi$.  
Denote this infinity type $\pi_\infty$ by $(\rho_{\bfk}, V_{\bfk})$.

We use the following notation at a finite place $v$: $\frako_v = \frako_{F,v}$
is the integer ring of $F_v$; $\varpi_v$ is a uniformizer in $F_v$;
 $\frakp_v$ is the associated prime ideal of $F$ (and by abuse of notation also the
 prime of $F_v$);
 $\calO_{B,v}$ is the unique maximal order of $B_v$ if $B_v$ is division and $M_2(\frako_{v})$
 otherwise; $\frakP_v$ the unique maximal ideal of $\calO_{B,v}$ if $B_v$ is division
 and $\varpi_v \calO_{B,v}$ else; and $\calU_v^r = 1 + \frakP_v^r$.
 In addition, to uniformize terminology, by a special order of level $\frakp_v^{r}$ when 
 $B_v$ splits, we simply mean
 an Eichler order of level $\frakp_v^{r}$, i.e., a conjugate of 
 $R_0(\frakp_v^r) = \{ \bmx a & b \\ c & d \emx \in  M_2(\frako_v) : c \equiv 0 \mod \frakp_v^r \}$.  

Denote by $\hat B^\times$ the finite part of $B^\times(\A)$, which we also
regard as the subgroup of $B^\times(\A)$ with trivial components at infinity (and will
do similarly for $\hat \calO^\times = \prod_{v < \infty} \calO_v^\times$ for an order $\calO$).
Let $K = \prod_{v < \infty} K_v$ be a compact open subgroup of $\hat B^\times$ such that
$K_v \simeq \GL_2(\frako_{F,v})$ for almost all $v$.  Let $\Omega = \bigotimes \Omega_v$
be a unitary extension of $\omega |_{\hat F^\times \cap K}$ to $K$ such that $\Omega_v$
is trivial for almost all $v$.  Let $\pi^\Omega$ be the subspace of
$\pi$ on which $K$ acts by $\Omega$, and similarly for $\pi_v^{\Omega_v}$, $v < \infty$.
Then $\dim \pi_v^{\Omega_v} < \infty$ for $v < \infty$, $\dim \pi_v < \infty$ for $v | \infty$,
and these dimensions are 1 for almost all $v$.  Hence $\dim \pi^\Omega < \infty$.  Moreover,
we can choose $K$ sufficiently small and $\Omega$ suitably so that $\pi^\Omega \ne 0$.  
In fact, by our local results
for $v < \infty$ we may take $K_v = \calO_v^\times$ for some  special order  $\calO_v$ 
of level $\frakp_v^{c(\pi_v)}$, assuming $\pi_v$ is minimal when $B_v$ is ramified.

Note that if $f \in \pi^\Omega$, then $\pi_\infty(g)f \in \pi^\Omega$ for all 
$g \in B^\times_\infty$.  Denote by $\pi_\bfk^\Omega$ the subspace of $\pi^\Omega$
consisting of the vectors of highest weight for $\pi_\infty$.  Given any nonzero
$f_0 \in \pi_\bfk^\Omega$, it generates an irreducible $\pi_\infty$-module $V_{f_0}$.
Since $(\pi_\infty, V)$ is self-dual,
we may identify the contragredient $(\pi_\infty^*, V_{f_0}^*)$ with $(\rho_\bfk, V_\bfk)$.
This identification is unique up to scaling.
Now fix some $x_0 \in \hat B^\times$ such that $f_0(x_0) \ne 0$.  
To $f_0$, we associate the map 
$\phi : B^\times \bs B^\times(\A) \to V_\bfk = V_{f_0}^*$ given by
\begin{equation} \label{eq:af2mf}
\langle \phi(x), f \rangle = f_0(x_0) f(x), \quad x \in B^\times(\A), \, f \in V_{f_0} 
\end{equation}
One readily sees that $\phi$ satisfies $\phi(x \alpha g) = 
\Omega(\alpha)\rho_{\bfk}(g^{-1}) \phi(x)$ for
$\alpha \in \hat \calO^\times$ and $g \in B^\times_\infty$.  
Taking such maps $f_0 \mapsto \phi$ on a basis of $\pi_\bfk^\Omega$ gives an
embedding of $\pi_\bfk^\Omega$ into the collection of 
$\phi : B^\times \bs B^\times(\A) \to V_\bfk$ satisfying this transformation property.

\subsection{Quaternionic modular forms} \label{sec:qmfs}

Now suppose $K = \hat {\calO}^\times$ with $\calO$ a special order of $B$
of level $\frakN = \prod \frakp_v^{r_v}$, by which we mean $\calO_v$ is 
special order of $B_v$ of level $\frakp_v^{r_v}$ for each $v < \infty$.
In \cref{sec:hecke-corr}, we will assume $\calO_v$ is of ``unramified quadratic
type'' when $v | \mathfrak D$ and $r_v$ is odd, but we do not impose this restriction
until then.

We want to
construct spaces of modular forms on $B$ which correspond to Hilbert
modular forms of level $\frakN$ and central character (i.e., nebentypus) $\omega$.
For such forms to exist, we need that $c(\omega_v) \le r_v$ for each $v < \infty$
and further that $c(\omega_v) \le \frac{r_v}2$ for $v | \frakD$,
so we assume this now.  (All forms which are primitive at primes $v | \frakD$
satisfy this by \cite[Theorem 6.8]{shemanske-walling}).
We extend $\omega|_{\hat \frako_F^\times}$ 
to a character $\Omega$ of $\hat \calO^\times$ as follows.  In fact,
for later use, we will extend $\Omega$ to be a semigroup homomorphism
from $\hat \calO$ to $\C$.

Let $v < \infty$.  On $\frako_v$, we may view $\omega_v$ as a character
of $\frako_v^\times/\fraku_v^{c(\omega_v)}$, which we may pullback to a 
multiplicative map $\frako_v/\frakp^{c(\omega_v)}$ by extending it to be 1 on
noninvertible elements if $\omega_v$ is unramified and 0 on 
$\frakp_v$ if $\omega_v$ is ramified.

If $B_v$ is split, we can realize $\calO_v =  R_0(\frakp_v^n)$.
Then set $\Omega_v( \bmx a & b \\ c & d \emx) = \omega_v(d)$.

If $B_v$ is division, we can realize $\calO_v = \frako_{E,v} + \frakP_v^{r_v-1}$,
where $E_v/F_v$ is a quadratic extension.  Assume $E_v/F_v$ is unramified if
$r_v = 1$.  We first extend $\omega_v$ to 
a semigroup homomorphism $\Omega_v$ of $\frako_{E,v}$ such that 
$c_{E_v}(\Omega_v) = c(\omega_v)$ if $E_v/F_v$ is unramified and
$c_{E_v}(\Omega_v) = \max \{ 2c(\omega_v) - 1, 0 \}$ if $E_v/F_v$ is ramified.
(This is always possible by Frobenius reciprocity as $\frako_{v}^\times/\fraku_v^m$ embeds 
in $\frako_{E,v}^\times/\fraku_{E,v}^{m'}$ where $m'=m$ or $\max \{ 0, 2m-1 \}$ according to whether
$E_v/F_v$ is unramified or ramified.)  Assume $\Omega_v = 1$ if $\omega_v$
is unramified.
If $E_v/F_v$ is unramified, then also $\calO_v = \frako_{E,v} + \frakP_v^{r_v}$
and for $x \in \frako_{E,v}$, $\alpha \in \frakP_v^{r_v}$ we define 
$\Omega_v(x + \alpha) = \Omega_v(x)$.  If $E_v/F_v$ is ramified, simply define
$\Omega_v(x + \alpha) = \Omega_v(x)$ for $x \in \frako_{E,v}$, $\alpha \in  
\frakP_v^{r_v-1}$.  In either case, this is well defined since $c(\omega_v) \le \frac{r_v}2$,
and makes $\Omega_v$ a minimally ramified extension of $\omega_v |_{\frako_v^\times}$.
 
Note that $\Omega_v$ is trivial on $\calU_v^{r_v}$, and in fact 
trivial on $\calU_v^{r_v-1}$ when $B_v$ is division.  Further $\Omega_v$
is trivial on $\calO_v^\times$ whenever $\omega_v$ is unramified.
Note the above definition of $\Omega$ actually makes it a semigroup
homomorphism $\Omega : \hat \calO \to \C$, which is important for defining
Hecke operators and Brandt matrices with character.

Let $\bfk = (k_{\nu_1}, \ldots, k_{\nu_d})$.
We define the space of quaternionic modular forms of weight $\bfk$, level $\calO$,
and character $\Omega$ to be
\begin{multline*}
 M_\bfk(\calO, \Omega)  = \left\{ \phi : B^\times \bs B^\times(\A)  \to V_{\bfk} \mid 
\phi(x \alpha g) =  \Omega(\alpha)\rho_{\bfk}(g^{-1}) \phi(x), \,
\right. \\ \left. \text{for all }  x \in B^\times(\A), \, \alpha \in \hat \calO^\times, \, g \in B^\times_\infty \right\}.
\end{multline*}
When the class number $h_F$ of $F$ is 1, then $\A^\times = F^\times \hat \frako^\times F_\infty^\times \subset B^\times \hat \calO^\times B_\infty^\times$, so all such forms
must transform under the center by $\omega$ (assuming $\omega_v = \sgn^{k_v}$ for
all $v | \infty$---if not this space must be 0).
In general, not all such forms will transform
on the center by $\omega$, but we define the subspace of those that do to be
\begin{multline*}
 M_\bfk(\calO, \Omega, \omega)  = \left\{ \phi : B^\times \bs B^\times(\A)  \to V_{\bfk} \mid 
\phi(z x \alpha g) = \omega(z) \Omega(\alpha)\rho_{\bfk}(g^{-1}) \phi(x), \,
\right. \\ \left. \text{for all }
z \in \A^\times, \, x \in B^\times(\A), \, \alpha \in \hat \calO^\times, \, g \in B^\times_\infty \right\}.
\end{multline*}
Viewing $M_\bfk(\calO, \Omega)$ as a representation space for $F^\times \bs \A^\times$,
we get a decomposition
\[ M_\bfk(\calO, \Omega) = \bigoplus_\omega M_\bfk(\calO, \Omega, \omega), \]
where $\omega$ runs over all (necessarily finite order) idele class characters of $F$ 
which agree with $\Omega$ on $\hat \frako^\times$ such that $\omega_v = \sgn^{k_v}$ for
all $v | \infty$.  We note there are at most $h_F$ such $\omega$.

When $\bfk = {\bf 0} = (0, 0, \cdots, 0)$, $V_{\bfk}$ is 1-dimensional,
and $\phi$ may factor through the (reduced) norm, i.e., $\phi = \mu \circ N$ for
some character $\mu$ of $F^\times \bs \A^\times$.  
Let $E_{\bf 0}(\calO,\Omega, \omega)$ be the linear span of such $\phi = \mu \circ N$,
which we think of  as the Eisenstein subspace of $M_\bfk(\calO, \Omega, \omega)$ (even 
though there are no cusps).  More explicitly, a basis of $E_{\bf 0}(\calO,\Omega, \omega)$
is given by the set of $\mu \circ N$ where $\mu$ ranges over idele class characters
such that $\mu^2 = \omega$ and the local components of $\mu \circ N$ and
$\Omega$ agree on each $\calO_\frakp^\times$.  
In particular, each such 
$\mu_\frakp$ is unramified when $\frakp \nmid \frakN$.  
Moreover, if $\omega$ (and thus
$\Omega$) is trivial, $\mu$ runs over the set of quadratic idele class characters (including
the trivial character) which are unramified at all finite places
(note the number of such $\mu$ equals the 2-rank of the narrow Hilbert class group of
$F$).

We can define an inner product on $M_{\bf 0}(\calO, \Omega, \omega)$ given by
\[ (\phi, \phi') = \int_{\A^\times B^\times \bs B^\times(\A)} \phi(x) \overline{\phi'(x)} \,
dx \]
for a suitable choice of Haar measure $dx$ on $B^\times(\A)$.  Let
$S_{\bf 0}(\calO, \Omega, \omega)$ be the orthogonal complement of 
$E_{\bf 0}(\calO,\Omega, \omega)$ in $M_{\bf 0}(\calO,\Omega, \omega)$ with respect to this
inner product, which we call the space of cusp forms.  
If $\bfk \ne {\bf 0}$, set $E_\bfk(\calO,\Omega, \omega) = 0$ and
$S_\bfk(\calO,\Omega, \omega) = M_\bfk(\calO,\Omega, \omega)$.

Using \eqref{eq:af2mf} to translate from automorphic forms to quaternionic modular
forms gives vector-space isomorphisms
\begin{equation} \label{eq:Mk-decomp}
 M_\bfk(\calO, \Omega, \omega) \simeq \bigoplus_\pi \pi_\bfk^\Omega,
\end{equation}
and
\begin{equation} \label{eq:Sk-decomp}
 S_\bfk(\calO, \Omega, \omega) \simeq \bigoplus_{\dim \pi > 1} \pi_\bfk^\Omega,
\end{equation}
where $\pi$ runs over equivalence classes of irreducible automorphic 
representations of $B^\times(\A)$ with central character $\omega$.  
We will often identify $\pi_\bfk^\Omega$ with the corresponding subspace of 
$M_\bfk(\calO,\Omega,\omega)$ obtained from \eqref{eq:af2mf}.
The dimensions of the left-hand sides are finite (e.g., from finiteness of the class
number and the description below), so there are only finitely many
nonzero $\pi^\Omega$'s appearing on the right-hand side. In essence,
the main goal of this paper is an explicit description of these decompositions
for special orders $\calO$.  

Note one gets analogous
decompositions of $M_\bfk(\calO, \Omega)$ and $S_\bfk(\calO, \Omega)$
by summing the decompositions in \eqref{eq:Mk-decomp} and
\eqref{eq:Sk-decomp} over suitable $\omega$.

We may identify the (invertible, i.e., locally principal) fractional
right $\calO$-ideal classes with the set
$\Cl(\calO) = B^\times \bs \hat B^\times / \hat \calO^\times$.
Now fix a set of representatives $x_1, \ldots, x_h \in \hat B^\times$ for
$\Cl(\calO)$.  Since any $\phi \in M_\bfk(\calO, \Omega)$ is determined
by its values on $x_1, \ldots, x_h$, we can view 
\[ \phi : \{ x_1, \dots, x_h \} \to V_\bfk, \]
which must satisfy the compatibility condition: if $\gamma x_i =  x_i \alpha g$
for some $\gamma \in B^\times$, $\alpha \in \hat \calO^\times$
and $g \in B^\times_\infty$ (necessarily $g = \gamma_\infty$), then 
\begin{equation} \label{eq:phi-transf}
\phi(x_i) = \Omega(\alpha) \rho_\bfk(g^{-1}) \phi(x_i).
\end{equation}
Put $\kappa = \dim V_\bfk = \sum (k_i + 1)$.
Then we can view $\phi \in M_\bfk(\calO, \Omega)$ as the element
$(\phi(x_1), \cdots, \phi(x_h)) \in \C^{\kappa h}$.

Let $V_\bfk^{\Gamma_i, \Omega}$ denote the set of $v \in V_\bfk$ such that
$\rho_\bfk(\gamma) v = \Omega(\alpha) v$ for all
$\gamma \in \Gamma_i = x_i \hat \calO^\times B_\infty^\times x_i^{-1} \cap B^\times$,
where  $\alpha = x_i^{-1} \gamma \gamma_\infty^{-1} x_i \in \hat \calO^\times$ 
 and $\rho_\bfk(\gamma)$ means $\rho_\bfk(\gamma_\infty)$.  Then \eqref{eq:phi-transf}
implies each $\phi(x_i) \in V_\bfk^{\Gamma_i, \Omega}$, which is just the
set of $\Gamma_i$-invariant vectors in $V_\bfk$ if $\Omega$ is unramified.
 This implies that we have a vector-space isomorphism 
\[ M_\bfk(\calO,\Omega) \simeq \bigoplus_{i=1}^{h} V_\bfk^{\Gamma_i, \Omega}. \]
Namely, for $1 \le i \le h$, we associate with an element 
$v_i \in V_\bfk^{\Gamma_i, \Omega}$ the unique element of $M_\bfk(\calO,\Omega)$ 
such that $\phi(x_i) = v_i$ and $\phi(x_j) = 0$ for $j \ne i$.

\subsection{Hecke operators}

Now we define Hecke operators, whose action will be given by
Brandt matrices (with character) operating on $M(\calO, \Omega) \simeq 
\bigoplus V_\bfk^{\Gamma_i, \Omega} \subset \C^{h \kappa }$.

For $v < \infty$, 
let $\calO_v^\bullet = \calO_v \cap B_v^\times$.  We have chosen $\Omega$
so that it is a semigroup homomorphism from $\hat \calO^\bullet$ to $\C$,
where $\hat \calO^\bullet$ is the set of $(\alpha_v) \in \hat B^\times$ such that
$\alpha_v \in \calO_v^\bullet$ for all $v < \infty$.  
(Our prescription of $\Omega$ is not as specific as that in 
\cite{HPS:crelle}, \cite{HPS:mams}.)

For $\phi \in M_\bfk(\calO, \Omega)$ and $\alpha \in \hat \calO^\bullet$,
we define the double coset operator $\hat \calO^\times \alpha \hat \calO^\times$ by
\begin{equation} \label{eq:dc-oper}
(\hat \calO^\times \alpha \hat \calO^\times \cdot \phi )(x) = \sum_\beta \Omega^{-1}(\beta) \phi(x \beta),
\quad \text{where } \hat \calO^\times \alpha \hat \calO^\times = 
\bigsqcup_\beta \beta \hat \calO^\times.
\end{equation}
Here by $\Omega^{-1}(\beta)$ we mean $\frac 1{\Omega(\beta)}$ if
$\Omega(\beta) \ne 0$ and $0$ if $\Omega(\beta) = 0$.
Note that if $\beta' = \beta u$ where $u \in \hat \calO^\times$,
then $\Omega^{-1}(\beta') \phi(x \beta') = \Omega^{-1}(\beta) \phi(x \beta)$, 
so \eqref{eq:dc-oper} does not depend upon the choice of 
decomposition $\hat \calO^\times \alpha \hat \calO^\times = \bigsqcup_\beta \beta 
\hat \calO^\times$. 

For a nonzero integral ideal
$\frakn$, let $ T_\frakn =  T_\frakn^\calO$ denote the formal sum of (distinct) double cosets
$\hat \calO^\times \alpha \hat \calO^\times \subset \hat B^\times$, where $\alpha$ 
runs over elements  of $\hat \calO^\bullet $ such that 
$N(\alpha) \in \hat F^\times$ corresponds to the integral ideal $\frakn$, i.e., such that
$N(\alpha) \hat \frako \cap F = \frakn$.
In the obvious way, we may view $ T_\frakn$ as a (Hecke) operator on 
$M_\bfk(\calO, \Omega)$ via \eqref{eq:dc-oper}.

Note for $\alpha = (\alpha_\frakp) \in \hat \calO^\bullet$, 
$\calI = \bigcap \alpha_\frakp \calO_\frakp$ is an integral (nonzero locally principal)
right $\calO$-ideal.
  Conversely, given any integral (nonzero locally principal) right
$\calO$-ideal $\calI$, we can write each $\calI_\frakp = \alpha_\frakp \calO_\frakp$,
where $\alpha_\frakp \in \calO_\frakp/\calO^\times_\frakp$.  Thus the integral right
$\calO$-ideals are in one-to-one correspondence with $\hat \calO^\bullet/ \hat \calO^\times$.
Hence we have
\begin{equation} \label{eq:hecke-oper}
 ( T_\frakn \phi)(x) = \sum_{\beta \in \hat \calO(\frakn)/ \hat \calO^\times} \Omega^{-1}(\beta)\phi(x \beta), \quad \text{where } 
 \hat \calO(\frakn) = \{ \alpha \in \hat \calO^\bullet: N(\alpha) \hat \frako \cap F = \frakn \}.
 \end{equation}
In particular, if $\omega$ is unramified so $\Omega$ is trivial on $\hat \calO$, 
we can just interpret $T_\frakn \phi$ as the sum of
right translates of $\phi$ by integral right $\calO$-ideals of norm $\frakn$.
We always have that $T_\frako$ acts trivially on $M_\bfk(\calO,\Omega)$.

The full Hecke algebra $\mathcal H(\calO,\Omega)$ for $M_\bfk(\calO, \Omega)$
is the algebra over $\C$ generated by all $T_\frakn$'s.  The unramified Hecke algebra
$\mathcal H^S = \mathcal H^S(\calO,\Omega)$ is the subalgebra generated by the $T_\frakn$'s
for $\frakn$ coprime to all prime ideals $\frakp$ such that $\calO_\frakp
\not \simeq M_2(\frako_\frakp)$ (implicitly, $S$ denotes the set of
such primes $\frakp$ together with the set of infinite places).  
Since $\mathcal H^S$ is a commutative algebra of normal 
operators, $M_\bfk(\calO,\Omega)$ has a basis of eigenforms
for $\mathcal H^S$.
Moreover, if $\phi \in M_\bfk(\calO, \Omega)$ is an eigenform for
$\mathcal H^S$, then $\phi \in \pi_\bfk^\Omega$ for some irreducible
$\pi$ in the decomposition \eqref{eq:Mk-decomp}.

Since $M_\bfk(\calO, \Omega)$ has a factorizable basis of eigenfunctions
$\phi = \bigotimes \phi_v$, we can decompose our global Hecke operators into
a product a local Hecke operators via
\begin{equation} \label{eq:hecke-fact}
 T_\frakn \phi =  \prod T_{\frakp^{v_\frakp(\frakn)}} \phi_\frakp,
\end{equation}
where we let 
$T_{\frakp^m}$ act on the local component $\phi_\frakp$ by 
the local analogue of \eqref{eq:hecke-oper}.  (We can view each local operator
$T_{\frakp^m}$ as acting on local representation spaces $\pi_\frakp$
and locally $\beta$ runs over
elements of $\calO_\frakp^\bullet/\calO_\frakp^\times$ such that $v_{B_\frakp}(\beta) = m$.)
When $\calO_\frakp \simeq M_2(\frako_\frakp)$, 
these local Hecke operators are the
usual unramified local Hecke operators for $\GL_2(F_\frakp)$.

The following is a quaternionic analogue of the calculation of ramified
Hecke eigenvalues for elliptic or Hilbert newforms (e.g., \cite[Theorem 3]{atkin-lehner}).

\begin{prop} \label{prop:hecke-comp1}
 Let $\pi$ an irreducible representation appearing in \eqref{eq:Mk-decomp}.  
Let $\frakp | \frakD$, $\frakp^{n_\frakp}$ be the
level of the local order $\calO_\frakp$, and $m \ge 1$.

\begin{enumerate}[(i)]

\item If $\omega_\frakp$ is ramified then $T_{\frakp^m} = 0$ on $\pi_\bfk^\Omega$.

\item If $\omega_\frakp$ is unramified, $n_\frakp = 1$, 
and $\pi_\frakp \simeq \mu_\frakp \circ N_{B_\frakp/F_\frakp}$ for an unramified
character $\mu_\frakp$ of $F_\frakp^\times$, then
$T_{\frakp^m}$ acts by $\mu_\frakp(\varpi_\frakp)^m$ on $\pi_\frakp$.

\item If $n_\frakp = c(\pi_\frakp) \ge 2$, then $T_{\frakp^m} = 0$
on $\pi^\Omega_\bfk$.

\item If $c(\pi_\frakp) \ge 3$ and $m \ge n_\frakp - c(\pi_\frakp) - 1$, then $T_{\frakp^m} = 0$
on $\pi^\Omega_\bfk$.

\end{enumerate}
\end{prop}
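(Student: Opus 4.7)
My plan is to exploit the factorization of Hecke operators in \eqref{eq:hecke-fact} to reduce everything to the local factor at $\frakp$, then analyze \eqref{eq:hecke-oper} case by case using the semigroup-homomorphism structure of $\Omega_\frakp$ on $\calO_\frakp$ together with the explicit models of $\pi_\frakp$ from \cref{sec:loc-rep}. The basic engine, which drives parts (i) and parts of (iii)--(iv), is as follows. Any $\beta \in \calO_\frakp$ with $v_{B_\frakp}(\beta) \geq 1$ decomposes as $\beta = x + y$ with $x \in \frako_{E,\frakp}$ and $y$ in the appropriate radical ($\frakP_\frakp^{r_\frakp-1}$ in the ramified-$E$ case, $\frakP_\frakp^{r_\frakp}$ in the unramified-$E$ case); whenever $r_\frakp \geq 2$, we get $v_{B_\frakp}(y) \geq 1$ and hence $v_{B_\frakp}(x) \geq 1$, so $x \in \frakp_{E,\frakp}$. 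Thus $\Omega_\frakp(\beta) = \Omega_\frakp(x) = 0$ whenever the semigroup extension of $\Omega_\frakp$ to $\frako_{E,\frakp}$ annihilates $\frakp_{E,\frakp}$---equivalently, whenever the character $\Omega_\frakp|_{\frako_{E,\frakp}^\times}$ is ramified---and by the convention on $\Omega^{-1}$, every such summand contributes zero.

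Part (i) is then immediate: $\omega_\frakp$ ramified forces $r_\frakp \geq 2c(\omega_\frakp) \geq 2$ and makes $\Omega_\frakp|_{\frako_{E,\frakp}^\times}$ ramified. Part (ii) is a one-term calculation since $\calO_{B,\frakp}(\frakp^m)/\calO_{B,\frakp}^\times$ is represented by $\varpi_{B_\frakp}^m$, $\Omega_\frakp^{-1}(\varpi_{B_\frakp}^m) = 1$ as $\omega_\frakp$ is unramified, and $\pi_\frakp(\varpi_{B_\frakp}^m) v = \mu_\frakp(N(\varpi_{B_\frakp}))^m v = \mu_\frakp(\varpi_\frakp)^m v$. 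The one-dimensional subcase of (iii), with $\pi_\frakp = \mu \circ N$ and $c(\mu) = c(\pi_\frakp)/2 \geq 1$, also reduces to the engine: the existence of an $\Omega$-equivariant vector forces $\Omega_\frakp|_{\frako_{E,\frakp}^\times}$ to coincide with $(\mu \circ N_{E/F})|_{\frako_{E,\frakp}^\times}$, which is ramified because $\mu$ is.

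The main obstacle is the higher-dimensional subcase of (iii), because there $\omega_\frakp$ (and so $\Omega_\frakp|_{\frako_{E,\frakp}^\times}$) may be unramified and the engine fails. For this I would use the induced description $\pi_\frakp = \Ind_J^{B_\frakp^\times} \Lambda$ from \cref{sec:loc-rep}; the conductor matching $c(\pi_\frakp) = n_\frakp$ combined with the explicit formulas for $J$ in the odd- and even-level cases guarantees $\calO_\frakp^\times \subset J$. Working in the function model of $v \in \pi^\Omega$ and enumerating $\calO^\times \alpha \calO^\times = \bigsqcup_{u \in \calO^\times/H} u\alpha\calO^\times$ with $H = \calO^\times \cap \alpha\calO^\times\alpha^{-1}$, the single-coset contribution at $x=1$ collapses to $\Omega^{-1}(\alpha) \bigl(\sum_{u \in \calO^\times/H}\Omega^{-1}(u)\Lambda(u)\bigr) v(\alpha)$, because $v(u\alpha) = \Lambda(u) v(\alpha)$ for $u \in J$. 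The task then is to prove this character-style sum vanishes, which I expect to do via orthogonality, exploiting the explicit descriptions of $\Lambda|_{\calO_\frakp^\times}$ from the proofs of \cref{thm:34} and \cref{thm:35} together with the conjugation action of $\alpha$ on $H$: the conductor-matching condition is precisely what makes the relevant character of $\calO^\times$ differ non-trivially from $\Omega$ on cosets of $H$.

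Finally, part (iv) will follow by reducing to a minimal twist: write $\pi_\frakp = \tau \otimes \mu$ with $\tau$ minimal, use the twist-compatibility $\pi_\frakp^\Omega = \tau^{\Omega \otimes \mu^{-1}}$ and the corresponding transformation of Hecke operators to transport the problem to $\tau$, and then invoke (iii) or the basic engine applied to the twisted character $\Omega \otimes \mu^{-1}$. The bound $m \geq n_\frakp - c(\pi_\frakp) - 1$ is exactly what is needed to ensure that the twisted character is ramified enough on $\frako_{E,\frakp}^\times$ for the engine to apply.
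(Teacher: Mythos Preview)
Your treatment of (i), (ii), and the one-dimensional subcase of (iii) is sound and essentially matches the paper. The higher-dimensional subcase of (iii) and your reduction for (iv), however, both break down.

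For (iii) with $\dim \pi_\frakp > 1$, the inclusion $\calO_\frakp^\times \subset J$ on which your function-model computation relies is false. Take $c(\pi_\frakp) = 2n+1 \geq 3$: then $J = K^\times \calU_\frakp^n$ where $K/F_\frakp$ is the \emph{ramified} quadratic extension attached to $\pi_\frakp$ in \cref{sec:loc-rep}, whereas (since $r_\frakp = 2n+1$ is odd) $\calO_\frakp$ is of unramified quadratic type, so $\calO_\frakp^\times = \frako_{E_\frakp}^\times \calU_\frakp^{2n}$ with $E_\frakp/F_\frakp$ unramified. The image of $\frako_{E_\frakp}^\times$ in $\calO_{B,\frakp}^\times/\calU_\frakp^1 \simeq \F_{q^2}^\times$ has order $q^2-1$, while the image of $J \cap \calO_{B,\frakp}^\times = \frako_{K}^\times \calU_\frakp^n$ has order $q-1$; hence $\frako_{E_\frakp}^\times \not\subset J$ and your identity $v(u\alpha) = \Lambda(u)v(\alpha)$ is unavailable. (There is a secondary gap as well: evaluating $(T_{\frakp^m} v)(x)$ only at $x = 1$ does not by itself show $T_{\frakp^m} v = 0$, since one does not know a priori that $v(1) \neq 0$.)

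For (iv), the bound on $m$ cannot encode anything about the ramification of $\Omega \otimes \mu^{-1}$ on $\frako_{E,\frakp}^\times$: the latter depends only on $\mu$, $\Omega$, and $E_\frakp$, not on $m$. Moreover, your ``engine'' produces zeros only via vanishing of the semigroup extension of $\Omega_\frakp$ on $\frakp_{E,\frakp}$, and multiplying by $\mu \circ N$ (a nowhere-zero function on $B_\frakp^\times$) creates no new zeros. So the twist reduction does not reach either (iii) or the engine.

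The paper's argument for (iii) and (iv) is different and avoids the induced model entirely. One observes that for $u \in \calU_\frakp^{m'}$ and $\beta \in \calO_\frakp$ with $v_{B_\frakp}(\beta) = m$ one has $u\beta \in \beta + \frakP_\frakp^{m+m'}$, so left multiplication by $\calU_\frakp^{m'}$ preserves the set $\{\beta \in \calO_\frakp : v_{B_\frakp}(\beta) = m\}$ once $m + m' \geq r_\frakp - 1$; \emph{this} is the role of the lower bound on $m$. Taking $m' = c(\pi_\frakp) - 2$ (hence $m' \geq 1$ when $c(\pi_\frakp) \geq 3$) and using that $\pi_\frakp|_{\calU_\frakp^{m'}}$ decomposes as a sum of \emph{nontrivial} characters of the abelian group $\calU_\frakp^{m'}/\calU_\frakp^{r_\frakp - 1}$, the Hecke sum over each $\calU_\frakp^{m'}$-orbit vanishes by character orthogonality. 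The case $n_\frakp = c(\pi_\frakp) = 2$ is handled by the same mechanism with $\calO_{B,\frakp}^\times/\calU_\frakp^1$ in place of $\calU_\frakp^{m'}/\calU_\frakp^{r_\frakp-1}$. This treats (iii) and (iv) uniformly without ever writing $\pi_\frakp$ as an induced representation.
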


\begin{proof}
As explained above, the proposition boils down to a local calculation.
So for simplicity, for the proof,
we drop subscripts and revert to the notation of the local sections.
E.g., $F$, $B$, $\calO$, etc.\ now denote what were
earlier denoted by $F_\frakp$, $B_\frakp$, $\calO_\frakp$, etc.  In particular,
$\calO$ is a special order of level $\frakp^n$, $\Omega$
is a minimally ramified extension of (the restriction to $\frako^\times$ of) 
$\omega=\omega_\pi$ to $\calO^\times$, and 
$\Omega$ extends to a multiplicative function of $\calO$ such that $\Omega = 1$
if $\omega$ is unramified and $\Omega = 0$ outside $\calO^\times$ if $\omega$ is
ramified.  Consequently, $T_{\frakp^m} = 0$ if $\omega$ is ramified, and we may
assume $\omega$ is unramified and $\Omega = 1$.

Then the local Hecke operator is simply 
\[ (T_{\frakp^m} \phi) (x) = \sum_\beta \phi(x \beta), \]
where $\beta$ runs over the elements of $\calO/\calO^\times$ of norm $q^m$, 
i.e., $\beta$ runs over the right $\calO$-ideals of norm $\frakp^m$.

Suppose $\calO=\calO_B$ is the maximal order.  Then $\pi = \phi = \mu \circ N$
for some unramified character $\mu$ of $F^\times$ whose square is $\omega$.
Hence $(T_{\frakp^m} \phi) (x) = \mu(N(x \varpi_B^m)) = \mu(\varpi^m) \phi(x)$,
which gives the second part.

Now suppose $\calO$ has level $\frakp^n$, where $n \ge 2$. 

Consider an element $\beta \in \calO$ with $v_B(\beta) = m$.  Then 
$u \beta  \in \beta + \frakP^{m+m'}$ for any $u \in \calU^{m'}$.  In particular,
if we take $m' \ge \max \{ 1, n-m-1 \}$, then such $u \beta$ lies in $\calO$ with
valuation $m$.  Suppose $c(\pi) \ge 3$, $m \ge n - c(\pi) - 1$, and take $m' = c(\pi) - 2$.
Then the abelian group $\calU^{m'}/\calU^{n-1}$, which is 
nontrivial if $n \ge 3$, acts on the left on the right $\calO$-ideals of norm $\frakp^m$.
Now $\pi$ restricted to $\calU^{m'}$ breaks up as a sum of characters, all of which
are nontrivial. 
Assume $\phi$ is an eigenfunction of one of these characters, say $\psi'$.
Then $(T_{\frakp^m} \phi)(x)$ breaks up as a sum of expressions of the form
$\sum_u \psi'(u) \phi(x \beta)$, all of which are $0$.
In particular, for all $n \ge 3$ and $m \ge 1$, 
$T_{\frakp^m}$ annihilates all $\pi$ with $c(\pi) = n$.
This proves case (iv) and, in the situation that $n_\frakp \ge 3$, case (iii).

Finally, suppose $n=2$.  Then $\calO = \frako_E + \frakP$, where
$E/F$ is a (in fact any) ramified quadratic extension.  Since $\frakP \subset \calO$,
$\calO_B^\times$ acts (transitively) by left multiplication on the elements of $\calO$
of a fixed valuation $m \ge 1$.  Then, as above, if $c(\pi) = 2$, $(T_{\frakp^m} \phi)$
breaks up as a sum of sums of nontrivial characters on the abelian group 
$\calO_B^\times / \calU^1$, and thus kills $\pi$.
\end{proof}

\begin{rem} \label{rem:41}
Continuing with the local notation in the proof, if $\dim \pi = 1$, again we can
write $\pi = \phi = \mu \circ N$.  Suppose $\Omega = 1$.
Then by \cref{thm:34} and \cref{thm:35}, the only way
$\pi^\Omega \ne 0$ is if $\mu$ is trivial on $N_{E/F}(\frako_E^\times)$.  Now  
$(T_{\frakp^m}\phi)(x) = \sum \mu(N(\beta))  \phi(x)$, where $\beta$ runs over the 
elements of $\calO/\calO^\times$ of norm $q^m$.
This generalizes (ii) to $n_\frakp \ge 1$.
If $\mu$ is unramified, we see the local Hecke eigenvalue is just $\mu(q^m)$ times the 
number of $\calO$-ideals of norm $\frakp^m$. 
\end{rem}

We will also use the following calculation of ramified Hecke eigenvalues when
$B_\frakp$ is split.  Note, even though $B_\frakp \simeq M_2(F_\frakp)$, 
these local ramified Hecke operators are \emph{not} 
the standard ones that will arise in the next section, however this calculation
shows that they do agree on new forms of the appropriate level.
(A general comparison of these two definitions of ramified Hecke operators seems
not so simple.)

\begin{prop}
\label{prop:hecke-comp2}
Suppose $\frakp \nmid \frakD$, $n_\frakp \ge 1$ is the
level of the local Eichler order $\calO_\frakp$, and $m \ge 1$.
Let $\pi$ an irreducible representation appearing in \eqref{eq:Mk-decomp}
such that $c(\pi_\frakp) = n_\frakp$.  Then $T_{\frakp^m}$
acts by $\hat \calO^\times \bmx \varpi_\frakp^m & \\ & 1 \emx \hat \calO^\times$
on $\pi^\Omega$.
\end{prop}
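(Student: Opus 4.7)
The assertion is purely local at $\frakp$: by \eqref{eq:hecke-fact}, both $T_{\frakp^m}$ and the right-hand double-coset operator factor as their $\frakp$-components tensored with the identity at other places. I drop the $\frakp$-subscript, so $\calO = R_0(\frakp^n) \subset M_2(F)$ is the Eichler order of level $\frakp^n$, $\pi$ is a smooth irreducible representation with $c(\pi) = n$, and $v$ is a nonzero vector spanning the one-dimensional line $\pi^\Omega$ of local newforms (by Atkin--Lehner--Casselman theory).

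Decompose $T_{\frakp^m} = [D_0] + [R]$ where $D_0 = \calO^\times \bmx \varpi^m & 0 \\ 0 & 1 \emx \calO^\times$ is the standard double coset and $[R]$ is the sum of all remaining $\calO^\times$-double cosets of elements of determinant valuation $m$ in $\calO$. Since $[D_0] v$ is already the claimed right-hand side, it suffices to show $[R] v = 0$. For each non-standard double coset $D = \calO^\times \alpha \calO^\times$, the plan is to choose a single-left-coset decomposition $D = \bigsqcup \beta_i \calO^\times$, factor $\beta_i = \gamma_D \cdot u_i$ with $\gamma_D \in \GL_2(F)$ fixed and $u_i$ running over representatives of a coset of $\calO^\times$ inside $\GL_2(\frako)$, so that $\sum_i \pi(\beta_i) v = \pi(\gamma_D) \sum_i \pi(u_i) v$, and then invoke the ``oldform-trace'' identity
\[ \sum_{g \in \GL_2(\frako)/\calO^\times} \pi(g) v = 0, \]
valid because $c(\pi) \ge 1$ forces all $\GL_2(\frako)$-semi-invariants (equivalently, vectors fixed by $R_0(\frakp^{n-1})^\times$ up to the relevant character) to vanish in $\pi$. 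The inner sum either vanishes outright or is expressed as $-\pi(\gamma_D') v$ for a complementary group element, pairing with the analogous contribution of another non-standard double coset.

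As a concrete illustration for $m = n = 1$, the non-standard double cosets are exactly $D_1 = \calO^\times \bmx 1 & 0 \\ 0 & \varpi \emx \calO^\times$ and $D_2 = \calO^\times w \calO^\times$ with $w = \bmx 0 & 1 \\ \varpi & 0 \emx$. Writing the single cosets of $D_1$ as $\{\bmx 1 & 0 \\ j\varpi & \varpi \emx \calO^\times : j \in \frako/\frakp\}$ and applying the oldform-trace identity (whose $\mathbb{P}^1(\frako/\frakp)$-many representatives include $w_0 = \bmx 0 & 1 \\ 1 & 0 \emx$) yields $[D_1] v = -\pi(w) v$, while $D_2 = \calO^\times w$ is a single coset so $[D_2] v = +\pi(w) v$; these cancel, giving $T_\frakp v = [D_0] v$.

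The main obstacle is handling general $(m, n)$ uniformly: the double-coset space $\calO^\times \bs \calO^\bullet(\frakp^m)/\calO^\times$ is governed by the extended affine Weyl group of $\GL_2(F)$, and one must systematically organize the non-standard double cosets into groups whose contributions cancel via iterated oldform-trace identities at all intermediate levels $\frakp^{n-1}, \ldots, \frakp^0$. A clean uniform argument can be structured either by induction on $m$ using the convolution (Bernstein--Zelevinsky) relations in the affine Iwahori--Hecke algebra, which reduce everything to the $m = 1$ base case, or by a Mackey-theoretic classification of double cosets using the Bruhat--Tits tree.
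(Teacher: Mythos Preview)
Your argument for $m=n=1$ is correct (modulo the tacit assumption that $\omega_\frakp$ is unramified; when it is ramified, $\Omega^{-1}$ already kills every $\beta$ with $d\in\frakp$, so there is nothing to prove).  However, the proposal is incomplete: you treat only this base case and then merely \emph{name} two possible strategies for general $(m,n)$ without carrying either out.  The combinatorics you anticipate are real, and an inductive argument via Iwahori--Hecke relations or a tree classification would require nontrivial bookkeeping.

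The paper bypasses all of this with a short uniform argument using the \emph{Whittaker model}.  Take the newvector $W$ with $W(1)\ne 0$; it suffices to compute $(T_{\frakp^m}W)(1)=\sum_\beta \omega^{-1}(d)\,W(\beta)$ with $\beta=\bmx a & b\\ c & d\emx$ ranging over $\calO(\frakp^m)/\calO^\times$.  Split according to whether $d\in\frako^\times$ or $d\in\frakp$.  For $d\in\frako^\times$ an explicit factorization shows these $\beta$ fill out exactly the left cosets of $\calO^\times\bmx \varpi^m & \\ & 1\emx\calO^\times$.  For $d\in\frakp$, observe that $U=\left\{\bmx 1 & x\\ & 1\emx: x\in\frakp^{-1}/\frako\right\}$ acts by \emph{left} multiplication on this set (the entry $b+xd$ stays integral precisely because $d\in\frakp$, and $a+xc$ stays integral because $c\in\frakp^n$).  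In the Whittaker model $W(u(x)\beta)=\psi(x)W(\beta)$, so summing over each $U$-orbit introduces the factor $\sum_{x\in\frakp^{-1}/\frako}\psi(x)=0$ (as $\psi$ has level one).  This handles all $m\ge 1$ and $n\ge 1$ at once, with no double-coset enumeration and no oldform-trace identities.

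The contrast is instructive: your approach relies on the vanishing of traces to larger compact-open subgroups (level-lowering), whereas the paper exploits the transformation law under the \emph{unipotent} radical, which is only visible in the Whittaker model but is far more efficient here.
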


\begin{proof}
As in the previous proof, this is really a local calculation, so 
we will just use local notation and drop the $\frakp$'s from our subscripts.
However, now $B \simeq M_2(F)$ is the split local quaternion
algebra, and we may take $\calO = R_0(\frakp^n)$.  
By assumption $\dim \pi^{\calO^\times}  = 1$.

Take a (nonzero) new vector $W \in \pi^{\calO^\times}$ in the Whittaker model
with respect to an additive character $\psi$ of order 0.
We know $W(1) = 1$, so it suffices to compute 
\[ T_{\frakp^m} W(1) = \sum_g \omega^{-1}(d) W (g), \]
where $g = \bmx a & b \\ c & d \emx$ runs over the elements in $\calO/\calO^\times$
with $v(\det g) = m$.  
Note that we can break up the set of $g = \bmx a & b \\ c & d \emx$ in 
$\calO$ with $v(\det g) = m$ into two disjoint subsets $X$ and $Y$, where
$X$ consists of such $g$ with $d \in \frako^\times$ and $Y$ consists of such $g$
with $d \in \frakp$.  For any such $g \in X$, we can write
\[ g = \bmx d^{-1} & bd^{-1} \\ & 1 \emx \bmx \varpi^m & \\ & 1 \emx
\bmx u & \\ c & d \emx \in \calO^\times \bmx \varpi^m & \\ & 1 \emx \calO^\times, \]
where $u \varpi^m = \det g$.  Hence $X = \calO^\times \bmx \varpi^m & \\ & 1 \emx \calO^\times$, and it suffices to show that there is no contribution from 
$g \in Y/\calO^\times$ to the above sum for $T_{\frakp^m} W(1)$.

Let $U = \{ \bmx 1 & x \\ & 1 \emx : x \in \frakp^{-1}/\frako \}$.  Note that
$U$ acts by left multiplication on $Y$.
Let $s_g$ be the size of the stabilizer in $U$ of $g \calO^\times$,
which only depends upon the $U$-orbit of $g$.  Then the contribution of 
$g \in Y/\calO^\times$ to $T_{\frakp^m} W(1)$ is
\[  \sum_{g \in Y/\calO^\times} \omega^{-1}(d) W (g) = \sum_{g \in U \bs Y /\calO^\times}  \frac 1{s_{g}}
\omega^{-1}(d) \sum_{x\in \frakp^{-1}/\frako}    W(\bmx 1 & x \\ & 1 \emx g). \]
The inner sum is $\sum \psi(x) W(g)$, which vanishes 
since $\psi$ is nontrivial on $\frakp^{-1}/\frako$.
\end{proof}

\subsection{Brandt matrices} \label{sec:brandt}

Here we explain how to describe $T_\frakn$ in terms of matrices.
Recall we have a fixed set of representatives $x_1, \ldots, x_h$ for $\Cl(\calO)$.
We also fix a basis $v_1, \ldots, v_\kappa$ of $V_\bfk$, and use this to realize
$\rho_\bfk(\gamma) \in \GL_\kappa(\C)$ for $\gamma \in B^\times$.
Regard any 
$\phi \in M_\bfk(\calO, \Omega)$ as a column vector 
${}^t(\phi(x_1) \, \cdots \, \phi(x_h)) \in \C^{h \kappa}$.
To simplify notation, we extend $\Omega$ trivially from $\hat \calO$ to
$\hat \calO \times B_\infty^\times$, i.e., $\Omega(\beta,g) = 
\Omega(\beta)$ for $\beta \in \hat \calO$, $g \in B_\infty^\times$.

For $1 \le i \le h$, let $\bar \Gamma_i = \Gamma_i / \frako^1$, where
$\frako^1$ denotes the group of (absolute) norm one units of $\frako$.
Since $\rho_\bfk$ is trivial on $\frako^1$, the finite group $\bar \Gamma_i$
acts on $V_\bfk$ via $\rho_\bfk$.  Then $V_\bfk$ has a canonical decomposition
into isotypic subspaces with respect to this action, with $V_\bfk^{\Gamma_i, \Omega}$
being one of these subspaces.  Put $e_i = | \bar \Gamma_i|$.  The map 
\[ \Xi_i(v) = \frac 1{e_i} \sum_{\gamma \in \bar \Gamma_i} \Omega(x_i^{-1} \gamma  x_i) \rho_\bfk(\gamma^{-1}) v \]
defines the orthogonal projection $\Xi_i : V_\bfk \to V_\bfk^{\Gamma_i, \Omega}$
with respect to the canonical decomposition.  Let $\Xi$ be the block diagonal
matrix with diagonal blocks $\Xi_1, \dots, \Xi_h$.  Then $\Xi$ is a linear projection
from $\C^{h \kappa}$ to $M_\bfk(\calO, \Omega)$, and for a nonzero integral ideal
$\frakn$, we define the Brandt matrix $A_\frakn$ to be the $h\kappa \times h \kappa$
matrix such that
\begin{equation} \label{eq:brandt}
 A_\frakn = T_\frakn \circ \Xi,
\end{equation}
as a linear operator from $\C^{h \kappa}$ to $M_\bfk(\calO, \Omega)$.

We now describe the entries of $A_\frakn$ in classical terms.
For $1 \le i \le h$, let $\calI_i = (x_i \hat \calO \times B_\infty) \cap B$ denote the right $\calO$-ideal
associated to $x_i$.  Then $\Gamma_i$ is the unit group of the 
left order $\calO_l(\calI_i) = (x_i \hat \calO x_i^{-1} \times B_\infty) \cap B$ of $\calI_i$.

\begin{prop} \label{prop:brandt}
Let $a_{ij}$ denote the $(i,j)$-th $\kappa \times \kappa$ block of the Brandt matrix $A_\frakn$.  Then
\begin{equation} \label{eq:brandt2}
a_{ij} = \frac 1{e_j} \sum_\gamma
\Omega^{-1}(x_i^{-1}  \gamma x_j) \rho_\bfk(\gamma), 
\end{equation}
where $e_j = [\calO_l(\calI_j)^\times : \frako^1]$ and $\gamma$ runs over a set of
representatives for
\[ \{  \gamma \in \calI_i \calI_j^{-1} : N(\gamma) \frako = \frakn N(\calI_i
\calI_j^{-1}) \}/\frako^1. \]
\end{prop}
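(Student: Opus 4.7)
Since $A_\frakn = T_\frakn \circ \Xi$ is linear on $\C^{h\kappa}$, it suffices to compute its action on the column vector $\tilde v_0 \in \C^{h\kappa}$ with an arbitrary $v_0 \in V_\bfk$ in the $j$-th block and zero elsewhere: the $i$-th block of $A_\frakn \tilde v_0$ is $a_{ij}(v_0)$. The form $\phi := \Xi(\tilde v_0) \in M_\bfk(\calO, \Omega)$ satisfies $\phi(x_j) = \Xi_j(v_0)$ and $\phi(x_k) = 0$ for $k \ne j$, being determined elsewhere by the transformation law.

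Applying $T_\frakn$ via \eqref{eq:hecke-oper} and evaluating at $x_i$, I partition the sum by the class $[x_i\beta] \in B^\times \bs \hat B^\times / \hat \calO^\times$; only the terms with $[x_i\beta] = [x_j]$ contribute. For such $\beta$, write $x_i\beta = \gamma x_j u$ with $\gamma \in B^\times$ and $u \in \hat\calO^\times$; the transformation law yields $\phi(x_i\beta) = \Omega(u)\rho_\bfk(\gamma)\Xi_j(v_0)$, and since $\Omega$ is a semigroup homomorphism on $\hat\calO$ and $\beta = (x_i^{-1}\gamma x_j)u$, we get $\Omega^{-1}(\beta)\Omega(u) = \Omega^{-1}(x_i^{-1}\gamma x_j)$. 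Hence
\[ (T_\frakn \phi)(x_i) = \sum_{\beta} \Omega^{-1}(x_i^{-1}\gamma x_j)\rho_\bfk(\gamma)\Xi_j(v_0). \]

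The main technical step is the bijection
\[ \{\beta \in \hat\calO(\frakn)/\hat\calO^\times : [x_i\beta] = [x_j]\} \longleftrightarrow \{\gamma \in \calI_i\calI_j^{-1} : N(\gamma)\frako = \frakn N(\calI_i\calI_j^{-1})\}/\Gamma_j, \]
implemented by $\beta \mapsto \gamma = x_i\beta u^{-1}x_j^{-1}$. The containment $\gamma \in \calI_i\calI_j^{-1}$ follows from the local description $(\calI_i\calI_j^{-1})_v = x_{i,v}\calO_v x_{j,v}^{-1}$ applied to $x_i^{-1}\gamma x_j = \beta u^{-1} \in \hat\calO$; the norm condition is immediate; and the residual ambiguity---the choice of $u$ together with the $\hat\calO^\times$-class of $\beta$---translates exactly to right multiplication of $\gamma$ by $\Gamma_j = \calO_l(\calI_j)^\times$. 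This bookkeeping is where I expect to need the most care.

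Finally, to convert to \eqref{eq:brandt2}, I transport from the $\Gamma_j$-orbits of $S := \{\gamma \in \calI_i\calI_j^{-1} : N(\gamma)\frako = \frakn N(\calI_i\calI_j^{-1})\}$ to $\frac{1}{e_j}\sum_{\gamma \in S/\frako^1}$, using that $\rho_\bfk$ and $\Omega$ are trivial on $\frako^1$ and that the summand, applied to $\Xi_j(v_0) \in V_\bfk^{\Gamma_j,\Omega}$, is $\Gamma_j$-invariant (since $\rho_\bfk(\epsilon)\Xi_j(v_0) = \Omega(x_j^{-1}\epsilon x_j)\Xi_j(v_0)$ for $\epsilon \in \Gamma_j$). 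The last wrinkle is to replace $\Xi_j(v_0)$ by the arbitrary $v_0$: the operator $T(v) := \sum_{\gamma \in S/\frako^1} \Omega^{-1}(x_i^{-1}\gamma x_j)\rho_\bfk(\gamma)\,v$ on $V_\bfk$ satisfies $T \circ \rho_\bfk(\epsilon) = \Omega(x_j^{-1}\epsilon x_j)\,T$ for all $\epsilon \in \Gamma_j$ (by the reindexing $\gamma \to \gamma\epsilon^{-1}$), so $T$ annihilates every $\Gamma_j$-isotype of $V_\bfk$ other than $V_\bfk^{\Gamma_j,\Omega}$. Therefore $T(v_0) = T(\Xi_j(v_0))$, yielding \eqref{eq:brandt2}.
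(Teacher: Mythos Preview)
Your argument is correct and follows essentially the same route as the paper's proof: both compute $(T_\frakn\circ\Xi)$ on a vector in the $j$-th block, set up the bijection between $\{\beta\in\hat\calO(\frakn)/\hat\calO^\times:[x_i\beta]=[x_j]\}$ and $S/\Gamma_j$, and then pass from $\Gamma_j$-orbits to $\frako^1$-orbits. The only organizational difference is in the last step: the paper writes out $\Xi_j=\frac{1}{e_j}\sum_{\gamma\in\bar\Gamma_j}\Omega(x_j^{-1}\gamma x_j)\rho_\bfk(\gamma^{-1})$ explicitly, composes with the operator $\sum_\beta\Omega^{-1}(x_i^{-1}\gamma_\beta x_j)\rho_\bfk(\gamma_\beta)$, and makes the change of variable $\gamma_\beta\gamma^{-1}\mapsto\gamma$, whereas you instead use $\Gamma_j$-invariance on $\Xi_j(v_0)$ and then invoke the intertwining relation $T\circ\rho_\bfk(\epsilon)=\Omega(x_j^{-1}\epsilon x_j)\,T$ (hence $T$ kills the non-$\chi_\Omega$ isotypes) to replace $\Xi_j(v_0)$ by $v_0$.
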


\begin{proof}
For $1 \le i, j \le h$ and $\beta \in
\hat \calO^\bullet$, put
\[ \Gamma^{i,j}(\beta) = x_i \beta \hat \calO^\times B_\infty^\times x_j^{-1}  \cap B^\times. \]
Note $\Gamma^{i,j}(\beta)$ only depends upon the class of $\beta$ in 
$\hat \calO^\bullet/ \hat \calO^\times$.  
Conversely, $\Gamma^{i,j}(\beta) \cap \Gamma^{i,j}(\beta')  \ne \emptyset$ implies 
$\beta' \in \beta \hat \calO^\times$.
Moreover if $\Gamma^{i,j}(\beta) \neq \emptyset$, then $\Gamma^{i,j}(\beta) = \gamma_\beta \Gamma_j$ for any choice of $\gamma_\beta \in \Gamma^{i,j}(\beta)$.
Hence there is a bijection between the classes of $\beta \in \hat \calO^\bullet/ 
\hat \calO^\times$ such that $\Gamma^{i,j}(\beta) \ne \emptyset$ and the classes of
$\gamma_\beta \in (x_i \hat \calO^\bullet B_\infty^\times x_j^{-1} \cap B^\times)/ \Gamma_j$ via
$\Gamma^{i,j}(\beta) = \gamma_\beta \Gamma_j$.

Since $(T_\frakn \phi)(x_i) = \sum_\beta \Omega^{-1}(\beta)\phi(x_i \beta)$
where $\beta$ runs over $\hat \calO(\frakn)/ \hat \calO^\times$ as in \eqref{eq:hecke-oper},
for $v \in V_\bfk^{\Gamma_j,\Omega}$ we have
\[ a_{ij} v 
= \sum_\beta \Omega^{-1}(\beta u_\beta^{-1}) \rho_\bfk(\gamma_\beta) v
= \sum_\beta \Omega^{-1}(x_i^{-1} \gamma_\beta x_j) \rho_\bfk(\gamma_\beta) v, \]
where now $\beta$ runs over a set of representatives  of $\hat \calO(\frakn)/ \hat \calO^\times$ 
such that $\Gamma^{i,j}(\beta) \ne \emptyset$, and for each such $\beta$ we
choose a $u_\beta \in \hat \calO^\times$ and $\gamma_\beta \in \Gamma^{i,j}(\beta)$
such that $x_i \beta = \gamma_\beta x_j u_\beta \gamma_{\beta,\infty}^{-1}$.

Then the matrix $a_{ij}$ can be computed as the above operator (which depends
upon the choices of $\gamma_\beta$'s) times $\Xi_j$.  Multiplying these yields
\[ a_{ij} = \frac 1{e_j} \sum_\beta \sum_{\gamma \in\bar \Gamma_j} 
\Omega^{-1}(x_i^{-1} \gamma_\beta \gamma^{-1}  x_j) \rho_\bfk(\gamma_\beta
\gamma^{-1}). \]
Now applying the change of variable $\gamma_\beta \gamma^{-1} \mapsto \gamma$
gives \eqref{eq:brandt} with $\gamma$ running over a set of representatives of 
$(x_i\hat \calO(\frakn)B_\infty^\times x_j^{-1} \cap B^\times)/\frako^1$.
Since $(x_i \hat \calO x_j^{-1} \times B_\infty) \cap B = \calI_i \calI_j^{-1}$, the proposition follows.
\end{proof}

Note that in the special case of trivial weight and character ($\bfk = \zero$, $\Omega = 1$), 
we simply get
\[ a_{ij} = |\{ \gamma \in \calI_i \calI_j^{-1} : N(\gamma) \frako = 
\frakn N(\calI_i \calI_j^{-1}) \} / \calO_l(\calI_j)^\times |. \]

We also define a Brandt matrix $A_0$ associated to the zero ideal.
If $\bfk = \zero$ and $\Omega = 1$, let $A_0$ be the $h \times h$ diagonal matrix whose $i$-th diagonal entry
is $\frac 1{e_i}$.   Otherwise, set $A_0 = 0$.

The above approach to defining Brandt matrices is similar in spirit to that in 
\cite{HPS:mams}, which treats special orders with character over $\Q$.
Brandt matrices without character were also defined for maximal orders over 
totally real fields by Eichler in \cite{eichler:1977}.  Our definition coincides with
Eichler's (up to a formal difference of working with right ideals rather than
left ideals).

\subsection{Quaternionic new and old forms} \label{sec:quat-nfs}

For a special order $\calO' \supset \calO$, we say $\Omega'$ is an
admissible extension of $\Omega$ to $\hat \calO'$ if $\Omega'$ is an extension of
$\omega$ to $\hat \calO'$ constructed as in \cref{sec:qmfs} which agrees with 
$\Omega$ on $\hat \calO$.
We define the subspace $M_\bfk^\old(\calO, \Omega, \omega)$ of \emph{oldforms}
 in $M_\bfk(\calO, \Omega, \omega)$
to be the space generated by $\phi \in M_\bfk(\calO, \Omega, \omega)$
such that $\phi \in M_\bfk(\calO', \Omega', \omega)$, where $\calO'$ is a special
order properly containing $\calO$ and $\Omega'$ is an admissible extension of
$\Omega$ to $\hat \calO'$.
Define the subspace $M_\bfk^\new(\calO, \Omega, \omega)$
of \emph{newforms} to be the orthogonal complement of 
$M_\bfk^\old(\calO, \Omega, \omega)$  in $M_\bfk(\calO, \Omega, \omega)$
with respect to \eqref{eq:Mk-decomp}.

We define the new and old spaces of cusp forms and Eisenstein forms in 
$M_\bfk(\calO, \Omega, \omega)$ to be the intersection of the new and
old spaces with $S_\bfk(\calO, \Omega, \omega)$ and $E_\zero(\calO,
\Omega, \omega)$, and denote them in a similar way.

\begin{prop} \label{prop:quat-AL-decomp}

\begin{enumerate}[(i)]
\item We have the decomposition
\[ S_\bfk^\new(\calO, \Omega, \omega) \simeq \bigoplus \pi_\bfk^\Omega, \]
where $\pi$ runs over the representations in \eqref{eq:Sk-decomp}
such that $c(\pi) := \prod \frakp^{c(\pi_\frakp)} =  \frakN$.  

\item Moreover, 
as $\mathcal H^S(\calO,\Omega)$-modules, we have
\[ S_\bfk^\old(\calO, \Omega, \omega) \simeq \bigoplus 
S_\bfk^\new(\calO', \Omega', \omega), \]
where $\calO'$ runs over all special orders of $B$ properly containing $\calO$,
and $\Omega'$ runs over all admissible semigroup homomorphisms of 
$\hat \calO'$ extending $\Omega$.
\end{enumerate}
\end{prop}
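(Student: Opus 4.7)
The plan is to combine the orthogonal decomposition \eqref{eq:Sk-decomp} with the local dimension results of Section~\ref{sec:local-new}. For any special $\calO' \supseteq \calO$ and admissible extension $\Omega'$ of $\Omega$, the natural inclusion $M_\bfk(\calO',\Omega',\omega) \hookrightarrow M_\bfk(\calO,\Omega,\omega)$ respects \eqref{eq:Mk-decomp}, giving $\pi_\bfk^{\Omega'} \subseteq \pi_\bfk^\Omega$ for each $\pi$. Hence each isotypic piece $\pi_\bfk^\Omega$ lies entirely in the new space or entirely in the old space, according as $\sum_{\calO' \supsetneq \calO,\, \Omega'} \pi_\bfk^{\Omega'}$ vanishes or fills out $\pi_\bfk^\Omega$.

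For (i), suppose first $c(\pi) = \frakN$, so that $c(\pi_\frakp) = r_\frakp$ at every $\frakp$. Any $\calO' \supsetneq \calO$ is strictly larger at some prime $\frakp$, so $\calO'_\frakp$ has level $\frakp^{r'_\frakp}$ with $r'_\frakp < r_\frakp = c(\pi_\frakp)$; by \cref{lem:cond-bound} this forces $\pi_\frakp^{\Omega'_\frakp} = 0$, and $\pi_\bfk^\Omega$ is orthogonal to the old space. Conversely, if $c(\pi_\frakp) < r_\frakp$ at some $\frakp$, I let $\calO'$ agree with $\calO$ away from $\frakp$ and take $\calO'_\frakp \supsetneq \calO_\frakp$ to be a special order of level $\frakp^{r_\frakp-1}$. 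The admissibility bound on $c(\omega_\frakp)$ is preserved, so an admissible $\Omega'$ extending $\Omega$ exists. The dimensions in \cref{thm:34}, \cref{thm:35}, and \cref{lem:1-d} depend only on $c(\pi_\frakp)$, so $\dim \pi_\frakp^{\Omega'_\frakp} = \dim \pi_\frakp^{\Omega_\frakp}$; combined with the automatic inclusion $\pi_\frakp^{\Omega'_\frakp} \subseteq \pi_\frakp^{\Omega_\frakp}$, this gives equality, and so $\pi_\bfk^\Omega \subseteq M_\bfk(\calO',\Omega',\omega)$ is old.

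For (ii), iterating (i) yields the spanning relation $S_\bfk^\old(\calO,\Omega,\omega) = \sum_{\calO' \supsetneq \calO,\, \Omega'} S_\bfk^\new(\calO',\Omega',\omega)$, and Hecke-equivariance under $\mathcal H^S$ is automatic since the inclusions $\pi_\bfk^{\Omega'} \subseteq \pi_\bfk^\Omega$ commute with the unramified Hecke action on each factor. By (i), each summand is the direct sum of $\pi_\bfk^{\Omega'}$ over those $\pi$ with $c(\pi)$ equal to the level of $\calO'$, so summands indexed by orders of different levels involve disjoint sets of $\pi$'s.

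The main obstacle is showing the sum in (ii) is genuinely direct at a fixed common level, i.e.\ that distinct pairs $(\calO', \Omega')$ with $\calO' \supsetneq \calO$ of the same level $\frakN'$ pick out linearly independent subspaces of $\pi_\bfk^\Omega$ for each $\pi$ with $c(\pi) = \frakN'$. This seems to require the local multiplicity-one for $(B_\frakp^\times, E_\frakp^\times)$ underlying Section~\ref{sec:local-new}, together with a careful Mackey-theoretic comparison of the different embeddings $\frako_{E_\frakp} \hookrightarrow \calO'_\frakp$.
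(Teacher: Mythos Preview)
Your argument for the converse of (i) breaks at split primes. If $B_\frakp$ is split and $\calO_\frakp = R_0(\frakp^{r_\frakp})$, then by Casselman's local newform theory $\dim \pi_\frakp^{\Omega_\frakp} = r_\frakp - c(\pi_\frakp) + 1$, which strictly drops when you pass to a single Eichler superorder of level $\frakp^{r_\frakp-1}$. So the equality $\pi_\frakp^{\Omega'_\frakp} = \pi_\frakp^{\Omega_\frakp}$ you claim is simply false there, and $\pi_\bfk^\Omega$ does not sit inside $M_\bfk(\calO',\Omega',\omega)$ for any single $\calO'$. The paper instead observes that $\pi_\frakp^{\Omega_\frakp}$ is spanned by the $r_\frakp - c(\pi_\frakp) + 1$ newvector lines attached to the \emph{several} Eichler orders of level $\frakp^{c(\pi_\frakp)}$ containing $\calO_\frakp$; this is exactly Casselman's result, and it simultaneously gives oldness in (i) and directness in (ii). Even at division primes your invocation of \cref{thm:34}, \cref{thm:35}, \cref{lem:1-d} is both unnecessary and incomplete: those results assume $\pi_\frakp$ minimal, whereas the elementary fact that $\pi_\frakp$ is trivial on $\calU_\frakp^{c(\pi_\frakp)-1}$ already gives $\pi_\frakp^{\Omega_\frakp} = \pi_\frakp^{\Omega'_\frakp}$ for the unique special superorder $\calO_{c(\pi_\frakp)}(E_\frakp)$, with no case analysis.

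Your diagnosis of the obstacle in (ii) is also misplaced. At primes $\frakp \mid \frakD$ there is a \emph{unique} special order of each level $\frakp^m$ containing $\calO_\frakp$ (the orders $\calO_m(E_\frakp)$ for fixed $E_\frakp$ form a chain), so there is nothing to check; no Mackey theory or $(B_\frakp^\times, E_\frakp^\times)$ multiplicity one is needed. The genuine multiplicity, and hence the only directness issue, arises at split primes $\frakp \mid \frakM$, where the $r_\frakp - c(\pi_\frakp) + 1$ superorders of minimal level are not nested, and it is precisely Casselman's theory that shows their newvector lines are linearly independent.
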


Note that $\Omega'$ admissible implies that when
$B_\frakp$ is split (resp.\ ramified) $\Omega'_\frakp$ is trivial
on $\calU_\frakp^{r_\frakp'}$ (resp.\ $\calU_\frakp^{r_\frakp'-1}$),
where $\frakp^{r_\frakp'}$ is the
level of $\calO'_\frakp$.  It will follow from the proof below that, for each
such $\calO'$, there is always
at most one such admissible $\Omega'$.

\begin{proof}
Consider $\pi$ occurring in \eqref{eq:Sk-decomp}, and write $n_\frakp = c(\pi_\frakp)$.
By \cref{lem:cond-bound} and its well-known analogue for places where
$B$ splits, if $c(\pi) = \frakN$, then $\pi^\Omega_\bfk \subset S_\bfk^\new(\calO, \Omega, \omega)$.  This proves one direction of (i).

Now suppose $c(\pi) := \prod \frakp^{n_\frakp} \ne \frakN := \prod \frakp^{r_\frakp}$, so $n_\frakp \le r_\frakp$ for all $\frakp$ and this equality is strict for at least one $\frakp$.
Suppose $\frakp$ is such that $n_\frakp < r_\frakp$.

If $B_\frakp$ is split, we may take $\calO_\frakp = R_0(\frakp^{r_\frakp})$.  By local newform theory (see \cite{casselman})
we have that $\pi_\frakp^{\Omega_\frakp}$ is generated by the lines which
are $\Omega_\frakp'$-equivariant under the orders 
$\bmx \frako_\frakp & \frakp^{-j}_\frakp \\ \frakp^{n_v+j}_\frakp &
\frako_\frakp \emx$ where $0 \le j \le r_\frakp - n_\frakp$.  
Here there is a
unique admissible extension $\Omega_\frakp'$ of $\Omega_\frakp$ to each such superorder,
namely $\Omega_\frakp' \bmx a & b \\ c & d \emx = \omega_\frakp(d)$.
These superorders are precisely the local Eichler orders of level
$\frakp^{n_\frakp}$ containing $\calO_\frakp$.  

If $B_\frakp$ is a division algebra, then we can write $\calO_\frakp^\times = \calO_{r_\frakp}(E_\frakp)^\times = 
\frako_{E,\frakp}^\times \calU_\frakp^{r_\frakp - 1}$ where $E_\frakp/F_\frakp$ is a 
quadratic field extension.  Since $\pi_\frakp$ is trivial on 
$\calU_\frakp^{n_\frakp-1}$, $\Omega_\frakp$ must be trivial on 
$\calU_\frakp^{n_\frakp-1} \cap \calO_\frakp^\times$.
Note that since $\calO_{n_\frakp}(E_\frakp) =
\calO_{r_\frakp}(E_\frakp) \calU_\frakp^{n_\frakp-1}$, there is a unique
extension $\Omega_\frakp'$ of $\Omega_\frakp$ to $\calO_{n_\frakp}(E_\frakp)$
which is trivial on $\calU_\frakp^{n_\frakp-1}$
 Then $\pi_\frakp^{\Omega_\frakp}$ is the also set  of
$\Omega_\frakp'$-equivariant vectors for $\calO_{n_\frakp}(E_\frakp)^\times$ in $\pi$.  
Recall that any local order in $B_\frakp$
containing $\frako_{E,\frakp}$ is of the form $\calO_j(E_\frakp)$ for some $j$,
so $\calO_{n_\frakp}(E_\frakp)$ is the unique order of level $\frakp^{n_\frakp}$ containing $\calO_\frakp$.

Consequently, using the proven direction of (i), we see 
\[ \pi_\bfk^\Omega = \bigoplus S_\bfk^\new(\calO', \Omega', \omega), \]
where $\calO'$ runs over special orders of $B$ of level $c(\pi)$ containing
$\calO$ and $\Omega'$ is the unique extension of $\Omega$ to
$\hat \calO'$ where $\Omega_\frakp'$ is as described above when
$\calO'_\frakp \not \simeq \calO_\frakp$ and $\Omega_\frakp' = \Omega_\frakp$
otherwise.
This implies both the other direction of (i) and (ii).
\end{proof}

This yields the following Atkin--Lehner type decomposition.

\begin{cor} \label{cor:quat-AL-decomp2}
 As $\mathcal H^S(\calO,\Omega)$-modules, we have
\[ S_\bfk(\calO, \Omega, \omega) \simeq \bigoplus 
S_\bfk^\new(\calO', \Omega', \omega), \]
where $\calO'$ runs over all special orders of $B$ containing $\calO$,
and $\Omega'$ runs over all admissible semigroup homomorphisms of 
$\hat \calO'$ extending $\Omega$.
\end{cor}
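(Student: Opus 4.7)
The plan is to derive this Atkin--Lehner type decomposition as an immediate consequence of \cref{prop:quat-AL-decomp}, combined with the fact that $S_\bfk(\calO, \Omega, \omega)$ splits by construction as the orthogonal direct sum of its new and old subspaces.

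First, I would invoke the definition
\[ S_\bfk(\calO, \Omega, \omega) = S_\bfk^\new(\calO, \Omega, \omega) \oplus S_\bfk^\old(\calO, \Omega, \omega), \]
which is built into the construction of $S_\bfk^\new(\calO, \Omega, \omega)$ as the orthogonal complement of the old subspace relative to the canonical decomposition \eqref{eq:Sk-decomp}.  Next, I would substitute in the description of the old subspace given by part (ii) of \cref{prop:quat-AL-decomp},
\[ S_\bfk^\old(\calO, \Omega, \omega) \simeq \bigoplus_{\calO' \supsetneq \calO} S_\bfk^\new(\calO', \Omega', \omega), \]
where the sum is over special orders $\calO'$ properly containing $\calO$ together with admissible extensions $\Omega'$ of $\Omega$ to $\hat{\calO}'$.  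Reindexing so that $\calO'$ ranges over all special orders containing $\calO$ (including the term $\calO' = \calO$ with $\Omega' = \Omega$, which accounts for the newspace at the original level) yields the stated decomposition.

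The Hecke-module compatibility with $\mathcal H^S(\calO, \Omega)$ is inherited from the proposition: each summand $\pi_\bfk^\Omega$ in \eqref{eq:Sk-decomp} is an $\mathcal H^S$-invariant subspace on which the unramified Hecke operators act by scalars depending only on $\pi$, and this action is respected by the identifications in parts (i) and (ii).  Since the substantive content is already contained in \cref{prop:quat-AL-decomp}, the only genuine task is notational: one must verify that the indexing on the right-hand side enumerates each $\mathcal H^S$-submodule of $S_\bfk(\calO, \Omega, \omega)$ exactly once.  This is ensured by the observation (noted in the remark following the proposition) that for each $\calO' \supseteq \calO$ there is at most one admissible extension $\Omega'$ of $\Omega$ to $\hat{\calO}'$, so the resulting decomposition is multiplicity-free on the level of pairs $(\calO', \Omega')$.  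Accordingly, I do not anticipate any real obstacle beyond this bookkeeping.
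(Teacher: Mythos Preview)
Your proposal is correct and matches the paper's approach: the corollary is stated immediately after \cref{prop:quat-AL-decomp} with the sentence ``This yields the following Atkin--Lehner type decomposition,'' and no separate proof is given, so the paper treats it exactly as you do---as the direct combination of the new/old orthogonal splitting with part (ii) of the proposition, reindexed to include $\calO' = \calO$. The only quibble is that the uniqueness of the admissible extension $\Omega'$ is noted just \emph{before} the proof of the proposition (and established within that proof), not in a remark following it.
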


For a special order $\calO$, we say the local order $\calO_\frakp$
is of \emph{unramified quadratic type} if $\calO_\frakp$ is an Eichler order
or is isomorphic to $\calO_r(E_\frakp)$ for some $r \ge 1$ where 
$E_\frakp/F_\frakp$ is the unramified quadratic extension.  Otherwise,
we say $\calO_\frakp$ is of \emph{ramified quadratic type}.  We say
 $\calO$ is globally of \emph{unramified quadratic type} if it is everywhere
locally.

Write the level $\frakN = \prod \frakp^{r_\frakp}$ of $\calO$ as $\frakN = \frakN_1 \frakN_2 \frakM$
where $\frakN_1$, $\frakN_2$ and $\frakM$ are the unique pairwise
coprime ideals such that $\frakM$ is coprime to $\frakD$
 and all primes $\frakp | \frakD$ at which $\calO$
is of unramified (resp.\ ramified) quadratic type divide $\frakN_1$ (resp.\
$\frakN_2$).  Necessarily, $r_\frakp$ is odd for $\frakp | \frakN_1$
and $r_\frakp \ge 2$ for $\frakp | \frakN_2$.

Consider a level $\frakN' | \frakN$, which we write as $\frakN' = \frakN_1' 
\frakN_2' \frakM'$, with $\frakN_1' | \frakN_1$, $\frakN_2' | \frakN_2$, and 
$\frakM' | \frakM$.  Write 
$\frakN' = \prod \frakp^{r_\frakp'}$.   For $\frakN'$ to be
the level of a special order $\calO'$ in $B$ containing $\calO$, we need 
$\calO'_\frakp$ to be of unramified quadratic type for $\frakp | \frakN_1'$
(so  $r_\frakp'$ is necessarily odd), which we assume.  
(For $\frakp | \frakN_2'$, $\calO'_\frakp$ is unramified quadratic type if $r_\frakp' = 1$
and of ramified quadratic type if $r_\frakp' \ge 2$.)
From the proof of the
above proposition, we find that the number of special orders $\calO'$
of level $\frakN'$ containing $\calO$ is
\[ m_\calO(\frakN') = \prod_{\frakp | \frakM} (r_\frakp - r'_\frakp + 1). \]
Since, for fixed $\frakN'$, all such $\calO'$ are conjugate, the 
isomorphism type (as a Hecke module) of the
subspace $M_\bfk(\calO', \Omega', \omega)$ in $M_\bfk(\calO, \Omega, \omega)$
only depends on $\frakN'$.

For each $\frakN' | \frakN$ as above, we fix a special order $\calO'(\frakN')$
of level $\frakN'$ containing $\calO$.  Then $\Omega$
has a (unique) admissible extension to 
$\hat \calO'$ if and only if $c(\omega_\frakp) \le \frac{r_\frakp'}2$
for each $\frakp | \frakN_1 \frakN_2$.
Consequently, the above corollary asserts a 
$\mathcal H^S(\calO, \Omega)$-module isomorphism
\begin{equation} \label{eq:AL-decomp2}
S_\bfk(\calO, \Omega, \omega) \simeq \bigoplus_{\frakN'} m_\calO(\frakN') 
S_\bfk^\new(\calO'(\frakN'), \Omega', \omega),
\end{equation}
where $\frakN' = \prod \frakp^{r_\frakp'}$ runs over divisors of $\frakN$ such that
(i) $r_\frakp' \ge \max\{ 1, 2c(\omega_\frakp) \}$ for all $\frakp | \frakN_1 \frakN_2$
and (ii) $r_\frakp'$ is odd for all $\frakp | \frakN_1$.

When $\bfk=\zero$, the above decompositions do not apply verbatim
to the Eisenstein spaces because the local representations are all 1-dimensional.
Instead, we simply have
\begin{equation} \label{eq:AL-eis-decomp}
E_\zero(\calO, \Omega, \omega) \simeq \bigoplus_{\frakN'} 
E_\zero^\new(\calO'(\frakN'), \Omega', \omega),
\end{equation}
with the sum as in \eqref{eq:AL-decomp2}.

\begin{lemma} \label{lem:eis-pM}
We have $E_\zero(\calO, \Omega, \omega) = 0$ unless $\omega_\frakp$
is unramified for all $\frakp \nmid \frakD$.
\end{lemma}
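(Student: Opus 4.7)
The plan is to unpack the definition of $E_\zero(\calO,\Omega,\omega)$ and show directly that the compatibility requirement between $\mu \circ N$ and $\Omega$ at a split prime $\frakp \nmid \frakD$ forces $\mu_\frakp$, and hence $\omega_\frakp = \mu_\frakp^2$, to be unramified.

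First, I would recall that by definition (from the paragraph following the definition of $E_\zero$), the space $E_\zero(\calO,\Omega,\omega)$ is linearly spanned by the vectors $\mu \circ N$, where $\mu$ is an idele class character with $\mu^2 = \omega$ and with the property that the local characters $\mu_\frakp \circ N_{B_\frakp/F_\frakp}$ and $\Omega_\frakp$ agree on $\calO_\frakp^\times$ at every finite prime $\frakp$. Thus if $E_\zero(\calO,\Omega,\omega) \neq 0$, at least one such $\mu$ exists, and it suffices to prove that any such $\mu$ must have $\mu_\frakp$ unramified at every $\frakp \nmid \frakD$; for then $\omega_\frakp = \mu_\frakp^2$ is also unramified at such $\frakp$.

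Fix a prime $\frakp \nmid \frakD$. Then $B_\frakp$ is split, and $\calO_\frakp$ is conjugate to the Eichler order $R_0(\frakp^{r_\frakp})$, on which the recipe from \cref{sec:qmfs} gives $\Omega_\frakp\bigl(\bmx a & b \\ c & d \emx\bigr) = \omega_\frakp(d)$. The reduced norm is the determinant, so on $\calO_\frakp^\times$ the compatibility condition reads
\[
\mu_\frakp(ad - bc) = \omega_\frakp(d), \qquad \bmx a & b \\ c & d \emx \in R_0(\frakp^{r_\frakp})^\times.
\]
I would now specialize to diagonal matrices $\bmx a & 0 \\ 0 & d \emx$ with $a, d \in \frako_\frakp^\times$, which lie in $\calO_\frakp^\times$. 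This gives $\mu_\frakp(a) \mu_\frakp(d) = \omega_\frakp(d)$. Setting $d = 1$ yields $\mu_\frakp(a) = 1$ for all $a \in \frako_\frakp^\times$, so $\mu_\frakp$ is unramified, and consequently $\omega_\frakp = \mu_\frakp^2$ is unramified, as desired.

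There is no real obstacle here beyond keeping the definitions straight; the substantive input is simply the explicit form of $\Omega_\frakp$ at split primes (which depends only on the lower-right entry) versus the form of $\mu_\frakp \circ N$ (which depends on the whole determinant). The entire argument consists of chasing the diagonal matrices through the compatibility equation. Note that the statement is silent about $\frakp \mid \frakD$ precisely because at those primes $\Omega_\frakp$ is built from a character of $\frako_{E_\frakp}^\times$ and the norm $N_{B_\frakp/F_\frakp}$ restricted to $\frako_{E_\frakp}^\times$ coincides with $N_{E_\frakp/F_\frakp}$, so ramified $\omega_\frakp$ can still be matched by a suitable $\mu_\frakp$.
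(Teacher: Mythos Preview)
Your proof is correct and follows essentially the same approach as the paper: both arguments test the compatibility condition $\mu_\frakp \circ \det = \Omega_\frakp$ on matrices of the form $\bmx a & \\ & 1 \emx$ with $a \in \frako_\frakp^\times$, which immediately forces $\mu_\frakp$ (and hence $\omega_\frakp = \mu_\frakp^2$) to be unramified. Your version is slightly more expansive, but the substance is identical.
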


\begin{proof}
Consider $\mu \circ N \in E_\zero(\calO, \Omega, \omega)$.  
If $B_\frakp$ is split, for $\Omega_\frakp$ to agree
with $\mu_\frakp \circ N$ on 
$ \bmx \frako_\frakp^\times & \\ & 1 \emx \subset \calO_\frakp^\times$, 
we need $\mu_\frakp$, and thus $\omega_\frakp = \mu_\frakp^2$, 
to be unramified.
\end{proof}

The next proposition says that typically only ``small levels'' contribute
to the Eisenstein subspaces.

\begin{prop} 
Suppose $\calO$ has level $\frakN = \prod \frakp^{r_\frakp}$.  Then
$E_\zero^\new(\calO, \Omega, \omega) = 0$ if one of the following holds:

\begin{enumerate}[(i)]
\item $\frakM \ne \frako$;

\item $c(\omega_\frakp) < \frac {r_\frakp}2$ for some $\frakp^2 | \frakN_1$
or nondyadic $\frakp^3 | \frakN_2$; or

\item $c(\omega_\frakp) < \frac {r_\frakp}2$ for some $\frakp^3 | \frakN_2$
with $v_\frakp(\frakN_2) \ge 2 t(E_\frakp/F_\frakp) + 2$, where
$\calO_\frakp \simeq \calO_{r_\frakp}(E_\frakp)$.
\end{enumerate}

In particular, if $\calO$ is of unramified quadratic type and $\omega$
is unramified, then $E_\zero^\new(\calO, \Omega, \omega) = 0$ unless $\calO$ is maximal, in which case $E_\zero^\new(\calO, \Omega, \omega) = E_\zero(\calO, \Omega, \omega)$.
\end{prop}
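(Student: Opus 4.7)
The plan is to show that, under each hypothesis, every $\phi \in E_\zero(\calO,\Omega,\omega)$ lies in $E_\zero(\calO',\Omega',\omega)$ for some special superorder $\calO' \supsetneq \calO$ with admissible extension $\Omega'$, so that $\phi$ is old. Any such $\phi$ has the form $\mu \circ N$ for an idele class character $\mu$ with $\mu^2 = \omega$ satisfying $(\mu \circ N)|_{\calO_\frakp^\times} = \Omega_\frakp$ at every finite prime. Since oldness is a local condition, it suffices to construct $\calO'_\frakp \supsetneq \calO_\frakp$ and admissible $\Omega'_\frakp$ at the distinguished prime $\frakp$ supplied by each hypothesis, leaving $\calO$ and $\Omega$ unchanged elsewhere.

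For part (i), pick any $\frakp \mid \frakM$, so $B_\frakp$ splits and $\calO_\frakp$ is an Eichler order of level $\frakp^{r_\frakp}$ with $r_\frakp \ge 1$. By the computation in the proof of \cref{lem:eis-pM}, $\mu_\frakp$ is forced to be unramified. Then $\mu_\frakp \circ \det$ is trivial on all of $\GL_2(\frako_\frakp)$, so $\phi$ lies in $E_\zero(\calO',\Omega',\omega)$ where $\calO'_\frakp = M_2(\frako_\frakp)$ and $\Omega'_\frakp = 1$ (trivially admissible). Since $\calO' \supsetneq \calO$, $\phi$ is old and hence $E^\new_\zero(\calO,\Omega,\omega) = 0$.

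For parts (ii) and (iii), take a prime $\frakp \mid \frakD$ satisfying the hypothesis, so $\calO_\frakp = \calO_{r_\frakp}(E_\frakp)$ for a quadratic $E_\frakp/F_\frakp$, and by the discussion of special orders in \cref{sec:loc-rep} the proper special superorders of $\calO_\frakp$ are the $\calO_{r'}(E_\frakp)$ with $r' < r_\frakp$ (with $r'$ odd when $E_\frakp/F_\frakp$ is unramified). By \cref{lem:1-d}, $\phi_\frakp \neq 0$ is equivalent to $(\mu_\frakp \circ N_{E_\frakp/F_\frakp})|_{\frako_{E_\frakp}^\times} = \Omega_\frakp|_{\frako_{E_\frakp}^\times}$ together with triviality of $\mu_\frakp \circ N_{B_\frakp/F_\frakp}$ on $\calU_\frakp^{r_\frakp - 1}$. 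Comparing conductors on $\frako_{E_\frakp}^\times$ using the surjectivity of the norm on higher units (for $E_\frakp/F_\frakp$ unramified) and \eqref{eq:norm-ram} (for $E_\frakp/F_\frakp$ ramified with $c(\mu_\frakp) \ge t_\frakp + 1$, where $t_\frakp = t(E_\frakp/F_\frakp)$), one obtains $c(\mu_\frakp) = c(\omega_\frakp)$ in the cases $\frakp^2 \mid \frakN_1$ and nondyadic $\frakp^3 \mid \frakN_2$, and $c(\mu_\frakp) = c(\omega_\frakp) + t_\frakp/2$ in case (iii). Combined with the triviality condition $c(\mu_\frakp) \le \lceil (r_\frakp - 1)e(E_\frakp/F_\frakp)/2\rceil$, the strict inequality $c(\omega_\frakp) < r_\frakp/2$ then yields a smaller $r'_\frakp < r_\frakp$ (of the parity appropriate to the type of $E_\frakp/F_\frakp$) on which $\mu_\frakp \circ N_{B_\frakp/F_\frakp}$ is still trivial, while $c(\omega_\frakp) \le r'_\frakp/2$ ensures that an admissible extension $\Omega'_\frakp$ of $\omega_\frakp$ to $\calO_{r'_\frakp}(E_\frakp)$ exists. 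This exhibits $\phi$ as an oldform. The ``in particular'' conclusion follows by observing that if $\calO$ is everywhere of unramified quadratic type and $\omega$ is unramified, then either $\frakM \ne \frako$ (invoke (i)), or some $\frakp^2 \mid \frakN_1$ has $0 = c(\omega_\frakp) < r_\frakp/2$ (invoke (ii)); otherwise $\calO$ is maximal and no proper special superorder exists, so $E^\new_\zero = E_\zero$ trivially.

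The main obstacle is case (iii): the dyadic parameter $t_\frakp \ge 1$ enters the conductor identity nontrivially, and the hypothesis $r_\frakp \ge 2t_\frakp + 2$ is precisely what guarantees $c(\mu_\frakp) \ge t_\frakp + 1$ so that \eqref{eq:norm-ram} can be applied to pin down $c(\mu_\frakp)$ in terms of $c(\omega_\frakp)$ and $t_\frakp$. The strict inequality $c(\omega_\frakp) < r_\frakp/2$ then provides the slack needed to descend to an $r'_\frakp$ respecting both the triviality of $\mu_\frakp \circ N$ on $\calU_\frakp^{r'_\frakp - 1}$ and the admissibility of the character extension; the bookkeeping with integer parities and the constraint $r' < r$ in the appropriate special-order chain is where the care lies.
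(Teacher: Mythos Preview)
Your overall strategy---show each $\phi = \mu \circ N$ lives in a strictly larger special order---matches the paper's, and your treatment of (i) is essentially the same argument.  For (ii) and (iii), however, the paper takes a different and more robust route: rather than attempting to compute $c(\mu_\frakp)$, it observes that $E_\zero(\calO,\Omega,\omega) = E_\zero(\calO',\Omega',\omega)$ whenever the norm images $N(\calO_\frakp^\times)$ and $N((\calO'_\frakp)^\times)$ coincide at every finite place, and then checks this equality of norm images directly.  For unramified quadratic type one has $N(\calO_\frakp^\times) = \frako_\frakp^\times$ independent of the level; for ramified quadratic type one computes $N(\calO_{r}(E_\frakp)^\times) = N(\frako_{E_\frakp}^\times)\,\fraku_\frakp^{\lceil (r-1)/2\rceil}$ and uses the bound on $r_\frakp$ to see this is unchanged upon passing from $r_\frakp$ to $r_\frakp - 1$.

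Your conductor computations contain genuine errors.  For nondyadic $\frakp^3 \mid \frakN_2$ you assert $c(\mu_\frakp) = c(\omega_\frakp)$, but this fails already when $\omega_\frakp$ is unramified: the quadratic character attached to $E_\frakp/F_\frakp$ has $c(\mu_\frakp) = 1$ yet $\mu_\frakp \circ N_{E_\frakp/F_\frakp}$ is trivial on $\frako_{E_\frakp}^\times$, so it is a perfectly good candidate with $c(\mu_\frakp) \ne c(\omega_\frakp)$.  In case (iii) your formula $c(\mu_\frakp) = c(\omega_\frakp) + t_\frakp/2$ is not even an integer when $t_\frakp$ is odd (as happens for several ramified extensions of $\Q_2$), and you have not justified the prerequisite $c(\mu_\frakp) \ge t_\frakp + 1$ needed to invoke \eqref{eq:norm-ram}.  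The expression ``$c(\mu_\frakp) \le \lceil (r_\frakp-1)e(E_\frakp/F_\frakp)/2\rceil$'' also conflates two different conditions: triviality of $\mu_\frakp \circ N_{B_\frakp/F_\frakp}$ on $\calU_\frakp^{r_\frakp-1}$ gives $c(\mu_\frakp) \le \lceil (r_\frakp-1)/2\rceil$ with no factor of $e$.  The paper's norm-image comparison sidesteps all of this, since it never needs to determine $c(\mu_\frakp)$.
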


\begin{proof}
Recall that $E_{\bf 0}(\calO,\Omega, \omega)$ is generated by
the characters $\mu \circ N$ of $B^\times(\A)$, 
where $\mu$ ranges over idele class characters
such that $\mu^2 = \omega$ and the local components of $\mu \circ N$ and
$\Omega$ agree on each $\calO_\frakp^\times$.  Thus, for a special
order $\calO' \supset \calO$ of level $\frakN'$, we have
$E_\zero(\calO, \Omega, \omega) = E_\zero(\calO', \Omega', \omega)$
(with $\Omega'$ as before) if $N((\calO'_v)^\times) = N(\calO_v^\times)$ for all finite $v$.  

Note that for the existence of a proper superorder $\calO' \supset \calO$ 
with an admissible extension $\Omega'$ of $\Omega$, we need that
$c(\omega_\frakp) < r_\frakp$ for some split $B_\frakp$ or 
$c(\omega_\frakp) < \frac {r_\frakp}2$ for some ramified $B_\frakp$.

If a local order $\calO'_v$ is of quadratic unramified type, then
$N((\calO_v')^\times) = \frako_v^\times$.  This proves 
$E_{\bf 0}^\new(\calO,\Omega, \omega) = 0$ if $c(\omega_\frakp) < r_\frakp$ for some
$\frakp | \frakM$, as well as the $\frakp^2 | \frakN_1$ part of (ii).  Then (i) follows from the previous lemma.

Since (iii) implies the rest of (ii), it remains to prove (iii).
Suppose $v | \frakN_2$
and $\calO_v \simeq \calO_{E_v}(r_v)$ where $E_v/F_v$ is some ramified
extension.   Write $t_v = t(E_v/F_v)$, and assume $r_v \ge 2 t_v + 3$ 
(i.e., $r_v \ge 3$ if  $v$ is non-dyadic).
Take $\calO_v' \simeq \calO_{E_v}(r_v-1)$.  Now
$N(\calO_v^\times) = N(\frako_{E_v}^\times) \fraku_v^{\lceil (r_v-1)/2 \rceil}$
and $N((\calO_v')^\times) = N(\frako_{E_v}^\times) 
\fraku_v^{\lceil (r_v-2)/2 \rceil}$.  Thus these norm subgroups are equal if
$N(\frako_{E_v}^\times) \supset \fraku_v^{\lceil (r_v-2)/2 \rceil}$.  By assumption
on $r_v$, this is the case since $N(\frako_{E_v}^\times) = N(\fraku_{E_v}^{t_v+1}) 
= \fraku_v^{t_v+1}$.
\end{proof}

%
%

\section{A classical Jacquet--Langlands correspondence}

Here we reinterpret the representation-theoretic Jacquet--Langlands correspondence
from \cite{JL} in more classical language, namely as a Hecke-module homomorphism
from a space of quaternionic modular forms to Hilbert modular forms.  
(Note: analytic details of the proof of the Jacquet--Langlands correspondence were
not actually completed in \cite{JL}, but for instance in \cite{duflo-labesse}---see also
\cite{gelbart} for an exposition.)  This interpretation
is essentially a generalization of \cite{shimizu} (see also \cite{gelbart})
from Eichler orders to special orders.

\subsection{Hilbert modular forms} \label{sec:HMFs}

First we recall some facts and set notation about Hilbert modular forms.
See \cite{shimura} for more details. We continue the notation from the 
previous section.  In particular, $F$ is a totally real number field of degree $d$
with adele ring $\A$. 

Let $\frako = \frako_F$ denote the integer ring of $F$, $\mathfrak d_F$ the
 absolute different, and $\frakN$ a nonzero integral
ideal of $\frako$.  Let $W=W(\frakN)$ (resp.\ $Y=Y(\frakN)$) be image under
the canonical involution $\iota$ of the level $\frakN$
subgroup of (resp.\ semisubgroup) of $\GL_2(\A)$ denoted by the same letter in 
\cite[Sec 2]{shimura}.  (These involuted subsets  make the subsequent
notation more straightforward.)
Namely, these are the subsets of $\GL_2(\A)$ with finite local components given by
\[ \begin{cases}
Y_\frakp = \bmx \frako_\frakp & \mathfrak d_F^{-1} \frako_\frakp \\
 \mathfrak d_F \frako_\frakp & \frako_\frakp \emx, \quad 
W_\frakp = Y_\frakp^\times &
\text{if } \frakp \nmid \frakN \\
Y_\frakp = \bmx \frako_\frakp & \mathfrak d_F^{-1} \frako_\frakp \\
\frakN \mathfrak d_F \frako_\frakp & \frako_\frakp^\times \emx, \quad 
W_\frakp = \bmx \frako_\frakp^\times & \mathfrak d_F^{-1} \frako_\frakp \\
\frakN \mathfrak d_F \frako_\frakp & \frako_\frakp^\times \emx = Y_\frakp^\times &
\text{if } \frakp | \frakN,
\end{cases} \]
and infinite components $\GL_2^+(\R)^d$.
Let $\psi$ be a finite-order Hecke character of $F$ of conductor dividing $\frakN$. We extend $\psi$ to $Y(\frakN)$ by $\psi_Y \bmx a & b \\ c & d \emx = \psi(d_\frakN \mod
\frakN)$, where $d_\frakN = \prod_{\frakp | \frakN} d_\frakp$ 
denotes the $\frakN$-part of $d$.

Let $\bfk = (k_1, \ldots, k_d)$.
We denote
by $M_\bfk(\frakN, \psi)$ the space of adelic holomorphic Hilbert modular
forms of level $\frakN$ and character $\psi$, which can be viewed
the space of functions $f$ on $\GL_2(\A_F)$ satisfying
\[ f(z \gamma g w) = \psi(z) \psi_Y(w) f(g), \quad
z \in \A_F^\times, \, \gamma \in \GL_2(F), \, w \in W(\frakN), \, w_\infty = 1, \]
together with the usual holomorphy conditions and weight $\bfk$ transformation
law at infinity.  Specifically, let $t_1, \ldots, t_{h_F}$ denote a set of ideal class representatives for $F$.  Then for $1 \le j \le h_F$, we require
\[ f_j(y \cdot (i, \ldots, i)) = \det y^{k/2} (c(i, \ldots, i)+d)^k f ( \bmx t_j^{-1} & \\ & 1 \emx
y), \quad y \in \GL_2(F_\infty)^+ \]
is a classical Hilbert modular form of weight $\bfk$.
Denote by $S_\bfk(\frakN, \psi)$ the subspace of cusp forms of $M_\bfk(\frakN, \psi)$.

For $f \in M_\bfk(\frakN, \psi)$, $x \in \A_F$ and $y \in \A_F^\times$ such that
$y_\infty \gg 0$,  we have a Fourier expansion of the form
\[ f \bmx y & x \\ & 1 \emx = c_0(y \frako) |y|^{\bfk/2} + \sum_{0 \ll \zeta \in F}
c(\zeta y \frako, f) (\zeta y_\infty)^{\bfk /2} e_F(\zeta i y_\infty) \chi_F(\zeta x), \]
where $e_F(z_1, \ldots, z_d) = \exp(2\pi i \sum z_j)$ and 
$\chi_F$ is the character of $\A_F/F$ agreeing with $e_F$ at infinity.  
The coefficients $c(\frakn, f)$ are 0 unless $\frakn$ is integral (i.e., we may
take $\zeta$ to run over totally positive elements of $y^{-1}\frako$ in the above sum) 
and $c_0(y \frako)$ is a function of strict ideal classes that is 0 
unless $\bfk$ is a parallel weight.
If each $f_j$ is a classical Hilbert modular form for the subgroup denoted
$\Gamma(t_j \mathfrak d_F, \frakN)$ in \cite{shimura}, then the adelic Fourier expansion corresponds to the classical Fourier
expansions
\[ f_j(z) = \sum a_j(\xi) e_F(\xi z), \quad \xi \in t_j \text{ such that } \xi = 0 \text{ or } \xi \gg 0, \]
where $a_j(\xi) = \xi^{\bfk/2} c(\xi t_j^{-1}, f)$ for $\xi \gg 0$ in $t_j$
and, in the case of parallel weight, $a_j(0) = N(t_j^{\bfk/2}) c_0(\eta t_j^{-1})$ 
for any $\eta \gg 0$.

For an integral ideal $\frakn$ of $F$, one defines the Hecke operator
$T_\frakn$ to be the sum over distinct $W(\frakN) y W(\frakN)$ where $y \in
Y(\frakN)$ such that $(\det y)\frako = \frakn$.  Here, if $W(\frakN) y W(\frakN) =
\bigsqcup y_j W(\frakN)$, the action on $f$ is given by
$\sum_j \psi_Y(y_j)^{-1} \pi(y_j) f$. Then $T_\frakn$ factors
as a product of local Hecke operators $T_{\frakn_\frakp}$ as in
\eqref{eq:hecke-fact}, and one readily sees the definition of the local Hecke operators
$T_{\frakn_\frakp}$ on $\GL_2(F_v)$ matches the definition on the
local quaternionic groups $B^\times_v$ when the local levels $\frakN_v = 1$
(so necessarily $B^\times_v \simeq \GL_2(F_v)$).  
Denote by $\mathcal H^S$ the Hecke algebra generated by $T_\frakn$ for
$\frakn$ coprime to $\frakN$.

Define the normalized Fourier coefficient $C(\frakn,f) = N(\frakn)c(\frakn,f)$.
(Our normalization is different from that in \cite{shimura} when $\bfk \ne (2,2,\ldots,  2)$.)
If $f$ is a common eigenfunction of the Hecke operators $T_\frakn$, 
and $f$ is normalized so that $C(\frako, f) = 1$, then the eigenvalue of $T_\frakn$
is $\lambda_f(\frakn) = C(\frakn, f)$.  If further $f$ is a cusp form,
it generates an irreducible cuspidal automorphic representation $\pi = \pi_f$
of weight $\bfk$ (each archimedean component $\pi_{\nu_j}$ is the
discrete series of weight $k_j$) with central character $\omega_\pi = \psi$ 
such that $c(\pi) = \prod_\frakp \frakp^{c(\pi_\frakp)}$ divides $\frakN$.  
We say such an $f$ is a newform if $c(\pi) = \frakN$, and denote the
span of newforms in $S_\bfk(\frakN,\psi)$ by $S_\bfk^\new(\frakN, \psi)$.

We have an Atkin--Lehner type decomposition in terms of newforms:
\[ S_\bfk(\frakN,\psi) = \bigoplus_{\frakM | \frakN} \, \bigoplus_{\, \mathfrak d |
\frakN \frakM^{-1}} \iota_{\mathfrak d} \left(S_\bfk^\new(\frakM, \psi) \right), \]
where the $\iota_{\mathfrak d}$ are the embeddings of 
$S_\bfk^\new(\frakM, \psi)$ into $S_\bfk(\mathfrak d \frakM, \psi)$ defined by
$C(\frakn, \iota_{\mathfrak d} f) = C(\frakn {\mathfrak d}^{-1}, f)$
(see \cite[Section 3]{shemanske-walling}).

The usual dictionary between modular forms and automorphic representations
defines an isomorphism (initially as vector spaces, but also as Hecke modules
with Hecke operators appropriately normalized):
\begin{equation} \label{eq:HMF-decomp}
 S_\bfk(\frakN, \psi) \simeq \bigoplus \pi_\bfk^{K_1(\frakN)},
\end{equation}
where $\pi$ runs over irreducible holomorphic weight $\bfk$ 
cuspidal automorphic representations of $\GL_2(\A)$ with central character $\psi$,
$K_1(\frakN)$ is the level $\frakN$ compact open subgroup of 
$\GL_2(\hat F)$ of ``type $\Gamma_1$,'' and $\pi_\bfk$ denotes the subspace
of $\pi$ of weight $\bfk$ vectors.

\subsection{Eisenstein series}

The Jacquet--Langlands correspondence is a only statement about 
\emph{cuspidal} representations.  We also want to know that the 
1-dimensional representations of quaternion algebras correspond to weight 
$\two=(2,2, \ldots, 2)$ Eisenstein series, which we explain here.

Let $B$, $\calO$ (a special order of level $\frakN$), $\omega$ and $\Omega$ 
be as in \cref{sec:QMFs}.  Let $\mu$ be a character of $F^\times \bs
\A^\times$ and suppose $\phi = \mu \circ N \in M_\zero(\calO, \Omega)$,
which means $\mu^2 = \omega$ and $\Omega$ agrees with $\mu \circ N$ on
$\hat \calO^\times$.  Then it is immediate from \eqref{eq:hecke-oper}
that $\phi$ is an eigenform for each Hecke operator $T_\frakn$ with
eigenvalue $\lambda_{\frakn}(\phi) = \sum \Omega^{-1}(\beta) \mu (N(\beta))$,
where $\beta \in \hat \calO(\frakn)/\calO^\times$.  By \cref{rem:41},
$\lambda_{\frakp^m}(\phi)$ is just the number of right $\calO$-ideals of norm
$\frakp^m$ if $\mu$ is unramified at $\frakp$.

Given an idele class character $\mu$ of $F$, we view this as a character on
ideals of $F$ by setting $\mu(\frakp)$ to be $\mu_\frakp(\varpi_\frakp)$
if $\mu_\frakp$ is unramified and 0 otherwise.

\begin{lemma}  Let $\fraka$, $\frakb$ be coprime integral ideals of $F$.
If $F=\Q$, assume $\fraka \ne \Z$. 

(i) There is an Eisenstein series $E_{\two,\fraka, \frakb} \in M_\two(\fraka \frakb, 1)$
 such that $C(\frakn, E_{\two,\fraka,\frakb}) = \sum_{\mathfrak d} N(\mathfrak d)$
 where $\mathfrak d$ runs over integral ideals dividing $\frakn$
 such that $\mathfrak d$ is coprime to $\fraka$ and ${\mathfrak d}^{-1} \frakn$
 is coprime to $\frakb$.  Moreover, $c_0(y \frako, E_{\two,\fraka,\frakb}) = 2^{-d} \zeta_F(-1)
 \prod_{\frakp | \fraka \frakb} (1 - N(\frakp)^{-1})$ for $y \in \A_F^\times$ with $y_\infty \gg 0$.
 
 (ii) Let $\mu$ be a nontrivial finite-order idele class character of $F$, with conductor $\frakc | \fraka \frakb$.  The twisted Eisenstein series
$E_{\two,\fraka,\frakb}(\mu) := E_{\two, \fraka,\frakb} \otimes \mu \in 
M_\two(\fraka \frakb \frakc, \mu^2)$
satisfies $C(\frakn, E_{\two,\fraka,\frakb}(\mu)) = \mu(\frakn) C(\frakn, E_{\two,\fraka,\frakb})$
and $c_0(y \frako, E_{\two,\fraka,\frakb}(\mu)) = 0$.
 \end{lemma}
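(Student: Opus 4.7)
The plan is to construct $E_{\two,\fraka,\frakb}$ as an adelic degenerate Eisenstein series, read off its Fourier expansion via the Bruhat decomposition, and deduce (ii) from the standard twisting formalism.

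For (i), I would work in the induced representation $I(s) = \Ind_{B(\A)}^{\GL_2(\A)}(|\cdot|^s,|\cdot|^{-s})$ and choose a flat section $\Phi_s = \Phi_{\infty,s}\otimes\bigotimes_\frakp \Phi_{\frakp,s}$ as follows: $\Phi_{\nu_i,s}$ is the weight-$2$ vector at each archimedean place; $\Phi_{\frakp,s}$ is the normalized spherical section for $\frakp\nmid\fraka\frakb$; at $\frakp\mid\fraka$ it is the $W(\fraka\frakb)_\frakp$-fixed section supported on the ``big cell'' side of the Bruhat decomposition; and at $\frakp\mid\frakb$ it is the Weyl-translated analog.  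I would then form $E(g,s,\Phi) = \sum_{\gamma\in B(F)\bs \GL_2(F)}\Phi(\gamma g,s)$, meromorphically continue, and evaluate at the weight-$2$ point.  The ramification at $\fraka\frakb$ together with the hypothesis excluding $F=\Q$, $\fraka=\Z$ should make the evaluation holomorphic without Hecke's trick; rescaling by a suitable constant defines $E_{\two,\fraka,\frakb}$.

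Next, the Fourier expansion follows from the Bruhat decomposition $E = \Phi + M(s)\Phi + \sum_{\xi\in F^\times} W_\xi$.  Each Whittaker integral factors into local integrals: at unramified $\frakp$ the local factor yields $\sum_{i=0}^{v_\frakp(\frakn)}N(\frakp)^i$, while at $\frakp\mid\fraka$ (resp.\ $\frakp\mid\frakb$) the support of $\Phi_\frakp$ collapses this sum to $i=0$ (resp.\ $i = v_\frakp(\frakn)$), producing exactly the stated coprimality condition.  The constant term $\Phi + M(s)\Phi\big|_{s=1/2}$ receives a global contribution involving $\zeta_F(-1)$ from the unramified intertwiner (via the completed zeta value and its functional equation), local corrections at each $\frakp\mid\fraka\frakb$ coming from applying the local intertwining operator to the nonspherical section, and an archimedean factor at weight $2$ producing the $2^{-d}$.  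For (ii), twisting the underlying section by $\mu\circ\det$ multiplies each Fourier coefficient by $\mu(\frakn)$, and since $c(\mu)=\frakc\mid\fraka\frakb$ the twist lies in $M_\two(\fraka\frakb\frakc,\mu^2)$; its constant term vanishes because $\sum_{u\in(\frako/\frakc)^\times}\mu(u)=0$ for nontrivial $\mu$ modulo $\frakc$.

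The main obstacle will be normalization bookkeeping to produce exactly the stated constant $2^{-d}\zeta_F(-1)\prod_{\frakp\mid\fraka\frakb}(1-N(\frakp)^{-1})$: the archimedean Gamma factor at weight $2$, the completed intertwiner at the relevant value of $s$, and the local intertwining corrections at each ramified prime must all be tracked in a mutually consistent way (these are subtle even in the classical case $F=\Q$, where one has to match against the familiar $E_2(\tau) - N\,E_2(N\tau)$ constructions).  Once these are in hand, translating the adelic formula back to the classical Fourier expansion via the dictionary in \cref{sec:HMFs} is routine.
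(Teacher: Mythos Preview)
Your approach is sound in principle but takes a genuinely different route from the paper's own argument.  The paper does not construct $E_{\two,\fraka,\frakb}(\mu)$ from scratch at all: it simply identifies it as the Eisenstein series attached to the pair of (imprimitive) ideal-class characters $\eta = \mu_\fraka$, $\chi = \mu_\frakb$ in Shimura's framework \cite[Proposition 3.4]{shimura}, which already records the level, central character, and the formula for $C(\frakn,\cdot)$.  The constant term is then quoted from \cite[Proposition 2.1]{DDP}, and the behaviour of Fourier coefficients under twisting in (ii) from \cite[Proposition 9.7]{shimura:1980}.  In other words, the paper's ``proof'' is a three-line appeal to existing literature.

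Your plan---building the section in $I(s)$, unfolding via Bruhat, computing local Whittaker integrals at ramified primes, and extracting the constant term from the intertwining operator---is exactly how one would reprove those cited results, and it would work.  The advantage of your route is that it is self-contained and makes transparent why the coprimality conditions on $\mathfrak d$ arise (from the support of the ramified sections) and where each factor in the constant term comes from.  The cost, as you already flag, is the normalization bookkeeping: matching the archimedean weight-$2$ factor, the global zeta value from the spherical intertwiner, and the local Euler factors at $\frakp \mid \fraka\frakb$ to land on precisely $2^{-d}\zeta_F(-1)\prod_{\frakp \mid \fraka\frakb}(1 - N(\frakp)^{-1})$ is genuinely delicate and is the reason the paper outsources it to \cite{DDP}.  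If you pursue your route, that constant-term computation is the only place with real content; the rest is routine.
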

 
 \begin{proof} Let $\mu_\frakM$ denote the character of ideals
 obtained by viewing $\mu$ as an (imprimitive if $\mathfrak c \ne \frakM$)
 character of conductor $\frakM$ (i.e., it is 0 on integral ideals not prime to 
 $\frakM$).
 The Eisenstein series $E_{\two,\fraka,\frakb}(\mu)$ is the Eisenstein
 series associated to the characters on ideal groups 
 $\eta=\mu_{\fraka}$ and $\chi = \mu_{\frakb}$ in \cite[Proposition 3.4]{shimura}, 
 which gives the level, central character and
 a formula for $C(\frakn, E_{2,\fraka,\frakb}(\mu))$ as in (i).
 (Technically, \cite[Proposition 3.4]{shimura} assumes $F\ne \Q$, but the elliptic case 
 is classical.)
 The constant term is calculated in \cite[Proposition 2.1]{DDP}.
 The explicit relation for twists of Fourier coefficients in (ii) is given in 
 \cite[Proposition 9.7]{shimura:1980}.  
 \end{proof} 
 
 Note that the Fourier coefficients of $E_{\two,\fraka,\frakb}(\mu)$
are multiplicative, which means that $E_{\two,\fraka,\frakb}$
is an eigenfunction at least for the unramified Hecke algebra.
We also note that $E_{\two,\fraka,\frakb}$ only depends on the squarefree
parts $\fraka_0$ and $\frakb_0$ of $\fraka$ and $\frakb$, so in fact
we have $E_{\two,\fraka,\frakb}(\mu) \in M_\two(\fraka_0 \frakb_0 \frakc,
\mu^2)$, where $\frakc$ is the conductor of $\mu$.
 
\begin{prop} \label{prop:Eis-corr}
Let $\mu$ be a finite-order idele class character of $F$
such that $\phi_\mu = \mu \circ N \in M_\zero(\calO,\Omega)$.
Write $\frakN = \frakN' \frakM$ where $\frakN'$ (resp.\ $\frakM$)
is of the form $\prod \frakp^{r_\frakp}$ where $\frakp$ runs over all finite
primes at which $B$ is ramified (resp.\ split).
Then $\phi_\mu$ is an eigenform whose Hecke
eigenvalue $\lambda_\frakn(\phi)$ for $T_\frakn$ is 
$C(\frakn,E_{\two,\frakN',\frakM}(\mu))$ for all $\frakn \nmid \frakN$.
Moreover, $E_{\two,\frakN',\frakM}(\mu) \in M_\two(\frakN, \mu^2)$.
\end{prop}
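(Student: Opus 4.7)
The plan is to reduce to an explicit local Hecke eigenvalue computation plus a verification of the level and character of the Eisenstein series. I would proceed in three steps: (i) show $\phi_\mu$ is a simultaneous eigenform with a closed-form eigenvalue, (ii) match the unramified eigenvalues to Fourier coefficients of $E_{\two,\frakN',\frakM}(\mu)$ via the preceding lemma, and (iii) verify the Eisenstein level divides $\frakN$ and has the asserted central character.

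For (i), since $\phi_\mu=\mu\circ N$ is multiplicative on $B^\times(\A)$, formula \eqref{eq:hecke-oper} immediately gives
\[
T_\frakn\phi_\mu \;=\; \lambda_\frakn(\phi_\mu)\,\phi_\mu,\qquad
\lambda_\frakn(\phi_\mu) \;=\; \sum_{\beta\in\hat\calO(\frakn)/\hat\calO^\times}\Omega^{-1}(\beta)\,\mu(N(\beta)).
\]
For (ii), factor $\lambda_\frakn$ over primes via \eqref{eq:hecke-fact}. When $\frakn$ is coprime to $\frakN$, only primes $\frakp\nmid\frakN$ contribute, and at such $\frakp$ one has $\calO_\frakp\simeq M_2(\frako_\frakp)$, $\Omega_\frakp\equiv 1$, and (by the compatibility $\mu\circ N=\Omega_\frakp$ on $\calO_\frakp^\times$) $\mu_\frakp$ unramified. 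The local eigenvalue becomes $\mu(\frakp)^{v_\frakp(\frakn)}$ times the number of right $M_2(\frako_\frakp)$-ideals of norm $\frakp^{v_\frakp(\frakn)}$, which Smith normal form identifies with $\sigma_1(\frakp^{v_\frakp(\frakn)})$. Multiplying yields $\lambda_\frakn(\phi_\mu)=\mu(\frakn)\sigma_1(\frakn)$. This matches $C(\frakn,E_{\two,\frakN',\frakM}(\mu))=\mu(\frakn)\sum_{\mathfrak d\mid\frakn}N(\mathfrak d)$ from the preceding lemma, since the two coprimality constraints $(\mathfrak d,\frakN')=(\mathfrak d^{-1}\frakn,\frakM)=1$ in that formula are automatic for $\frakn$ coprime to $\frakN$.

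For (iii), $\mu^2=\omega$ is built into the definition of $M_\zero(\calO,\Omega,\omega)$. The level, however, is more delicate. The preceding lemma directly gives level $\frakN'\frakM\frakc$ with $\frakc=c(\mu)$, which could exceed $\frakN$. I would instead use the sharper form noted in the remark there, $E_{\two,\fraka,\frakb}(\mu)\in M_\two(\fraka_0\frakb_0\frakc,\mu^2)$ with $\fraka_0,\frakb_0$ the squarefree radicals, and check $\fraka_0\frakb_0\frakc\mid\frakN$ for $\fraka=\frakN'$, $\frakb=\frakM$. At primes $\frakp\nmid\frakD$, the same compatibility argument as in step (ii) (now applied at $R_0(\frakp^{r_\frakp})^\times$) forces $\mu_\frakp$ unramified, so the $\frakp$-contribution to $\fraka_0\frakb_0\frakc$ is at most $\frakp\mid\frakp^{r_\frakp}$. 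At $\frakp\mid\frakD$, the compatibility together with $N(\calU_\frakp^{r_\frakp-1})=\fraku_\frakp^{\lceil(r_\frakp-1)/2\rceil}$ from \cref{sec:qmfs} yields $v_\frakp(\frakc)\le\lceil(r_\frakp-1)/2\rceil$, hence $v_\frakp(\fraka_0\frakb_0\frakc)\le 1+\lceil(r_\frakp-1)/2\rceil\le r_\frakp$. This last inequality at ramified primes of $\frakD$ is the main obstacle; both the radical refinement of the preceding lemma and the conductor bounds on $\mu$ hard-wired into the construction of $\Omega$ are essential.
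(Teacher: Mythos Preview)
Your proof is correct and follows essentially the same route as the paper's. Both reduce the eigenvalue claim to the local computation at primes $\frakp\nmid\frakN$, where $\mu_\frakp$ is unramified and the eigenvalue is $\mu(\frakp)^m$ times the count $(q_\frakp^{m+1}-1)/(q_\frakp-1)$ of right $M_2(\frako_\frakp)$-ideals of norm $\frakp^m$, which matches the Fourier coefficient from the preceding lemma. For the level, the paper phrases the bound at $\frakp\mid\frakD$ via the formula $c(\mu_\frakp\circ N)=\max\{2c(\mu_\frakp),1\}$ (from the 1-dimensional representation discussion in \cref{sec:loc-rep}) together with $c(\mu\circ N)\mid\frakN$, while you extract it from the norm image $N(\calU_\frakp^{r_\frakp-1})=\fraku_\frakp^{\lceil(r_\frakp-1)/2\rceil}$; since $\lfloor r_\frakp/2\rfloor=\lceil(r_\frakp-1)/2\rceil$ these yield the same bound $c(\mu_\frakp)\le\lfloor r_\frakp/2\rfloor$, and the final step $c(\mu)\frakN_0\mid\frakN$ is identical.
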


\begin{proof}
Since the Hecke eigenvalues are multiplicative, 
for the first part it suffices to consider prime power eigenvalues 
$\lambda_{\frakp^m}(\phi_\mu)$.
Suppose $\frakp \nmid \frakN$.  Then $\Omega_\frakp = 1$ and
$\mu_\frakp = 1$.  By \cref{rem:41}, $\lambda_{\frakp^m}(\phi_\mu)$
is $\mu(\frakp^m)$ times the number of $\calO_\frakp$-ideals of norm
$\frakp^m$.  It is well known that this is $\frac{q_\frakp^{m+1}-1}
{q_\frakp -1} = C(\frakp^m,E_{\two,\frakN',\frakM})$.  This proves
the first assertion.

To see $E_{\two,\frakN',\frakM}(\mu) \in M_\two(\frakN, \mu^2)$,
recall $c(\mu_\frakp \circ N) = \max \{ 2c(\mu_\frakp), 1 \}$ for $\frakp | \frakN'$
and, by the proof of \cref{lem:eis-pM}, $c(\mu_\frakp) = 0$ for $\frakp | \frakM$.
Since $c(\mu \circ N) | \frakN$, we see $c(\mu_\frakp) = 0$
when $v_\frakp(\frakN) \le 1$ and $c(\mu_\frakp) \le \frac{v_\frakp(\frakN)}2$
otherwise.  Hence $c(\mu) \frakN_0 | \frakN$, where $\frakN_0$ denotes
the squarefree part of $\frakN$.
\end{proof}

\begin{rem}
Since the above proposition only considers Hecke eigenvalues 
away from the level, there are other Eisenstein series we could have used
as well.  However, we chose $E_{\two,\frakN',\frakM}$
to correspond to the constant function $\phi_1 = 1$ on $B^\times(\A)$ because, at least in the case $\frakN$ is squarefree, the appropriate
definition of ramified Hecke operators makes the ramified
Hecke eigenvalues of $\phi_1$ match with the ramified Hecke eigenvalues
of $E_{\two,\frakN',\frakM}$.  Here, by appropriate definition of ramified
Hecke operators, we mean that one should use the same definition
of local ramified Hecke operators for $\frakp | \frakM$ on 
$M_\zero(\calO, \Omega, \omega)$ as on $M_\two(\frakN, \omega)$.
See \cite{me:cong} or \cite{me:cong2} for $\frakp | \frakN'$.

We also note that for a suitably normalized inner product $(\phi_1, \phi_1)$
is the mass $m(\calO)$ of $\calO$, and $(\phi_\mu, \phi_\mu) = 0$ for $\mu \ne 0$.
These inner products correspond to the constant terms 
$c_0(y\frako, E_{\two, \frakN', \frakM}(\mu))$,
up to simple factors.  For instance, if $\calO$ is maximal, then
$m(\calO) = 2h_F N(\frakN) |c_0(\frako, E_{\two, \frakN', \frakM}(\mu))|$
(see \cite[(1.6)]{me:cong}).
\end{rem}

\subsection{Correspondence of Hecke modules} \label{sec:hecke-corr}

Here we come to the main results of this section.

We summarize our notation from above: 
$B$ is a totally definite quaternion algebra
over $F$ with discriminant $\frakD$; $\frakN$ is a nonzero integral ideal in 
$\frako=\frako_F$ such that $\frakp | \frakN$ for all finite $\frakp$ where 
$B_\frakp$ is division; $\calO$ is a special order in $B$ of level 
$\frakN = \frakN_1 \frakN_2 \frakM = \prod \frakp^{r_\frakp}$ as in
\cref{sec:quat-nfs}; 
and $\Omega$ is a semigroup homomorphism
of $\hat \calO^\bullet$ extending $\omega|_{\hat \frako^\times}$ 
as in \cref{sec:qmfs}.  
Recall also
that $c(\omega_\frakp) \le \frac{r_\frakp}2$ for all $\frakp | \frakN_1 \frakN_2$.
We will further assume $\calO$ is chosen so that $r_\frakp$ is even for all
$\frakp | \frakN_2$; i.e., we choose $\calO$ so that it is of unramified
quadratic type at as many places as possible (given $\frakN$).

We call an eigenform \emph{$\frakp$-primitive}
if the associated local representation $\pi_\frakp$ is minimal. 
In classical language, this means that there is no $\frakp$-power conductor character $\chi$ such that twisting by $\chi$ lowers the level at $\frakp$.
For an ideal $\fraka$ in $\frako$, we say a form is $\fraka$-primitive if it is 
$\frakp$-primitive for all $\frakp | \fraka$.

The following is the first, unrefined version of our 
``classical'' Jacquet--Langlands correspondence.

\begin{prop} \label{prop:JL-prim}
There is a (non-canonical) homomorphism of $\mathcal H^S$-modules
\[ \JL: S_\bfk^\new(\calO, \Omega, \omega) \to S_{\bfk+\two}^\new(\frakN, \omega) \]
such that any $\frakD$-primitive newform $f \in S_{\bfk+\two}^\new(\frakN, \omega)$
lies in the image of this map.
\end{prop}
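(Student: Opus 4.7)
The plan is to construct $\JL$ summand-by-summand using the decomposition
\[ S_\bfk^\new(\calO,\Omega,\omega) \simeq \bigoplus_\pi \pi_\bfk^\Omega \]
provided by \cref{prop:quat-AL-decomp}(i), where $\pi$ runs over the irreducible automorphic representations of $B^\times(\A)$ with $\dim \pi > 1$, central character $\omega$, infinity type $\Sym^\bfk$, and conductor $c(\pi) = \frakN$. For each such $\pi$, the global Jacquet--Langlands correspondence \cite{JL} produces an irreducible cuspidal automorphic representation $\pi' = \JL(\pi)$ of $\GL_2(\A)$. One needs to verify that $\pi'$ contributes to $S_{\bfk+\two}^\new(\frakN,\omega)$: the central character is $\omega_{\pi'} = \omega_\pi = \omega$; each $\pi_\nu = \Sym^{k_\nu}$ at $\nu \mid \infty$ transfers to the holomorphic discrete series of weight $k_\nu + 2$; and $c(\pi'_\frakp) = c(\pi_\frakp) = r_\frakp$ at every finite $\frakp$ (at $\frakp \mid \frakD$ this is the very definition of $c(\pi_\frakp)$ from \cref{sec:loc-rep}, and at $\frakp \nmid \frakD$ both are conductors in the same local group). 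Via \eqref{eq:HMF-decomp}, $\pi'$ yields a one-dimensional line $(\pi'_{\bfk+\two})^{K_1(\frakN)} \subset S_{\bfk+\two}^\new(\frakN,\omega)$.

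To define $\JL$, choose for each such $\pi$ any nonzero linear map $\pi_\bfk^\Omega \to (\pi'_{\bfk+\two})^{K_1(\frakN)}$ and take the direct sum. Since both sides share the same eigenvalues under the unramified Hecke algebra $\mathcal H^S$ by the defining property of the Jacquet--Langlands correspondence at unramified places, this is automatically a homomorphism of $\mathcal H^S$-modules. The map is non-canonical because $\pi_\bfk^\Omega$ may have dimension larger than one at primes $\frakp \mid \frakD$ with $E_\frakp/F_\frakp$ ramified (cf.\ \cref{thm:35}).

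For the second assertion, let $f \in S_{\bfk+\two}^\new(\frakN,\omega)$ be $\frakD$-primitive and let $\pi'$ be the cuspidal automorphic representation it generates. At any $\frakp \mid \frakD$, the fact that $f$ lies in the newspace at $\frakp$ together with $\frakp$-primitivity forces $\pi'_\frakp$ to be a discrete series: any ramified principal series $\chi_1 \boxplus \chi_2$ with both $\chi_i$ ramified admits a twist by $\chi_1^{-1}$ of strictly lower conductor, violating minimality. Hence $\pi'_\frakp$ is everywhere locally in the image of Jacquet--Langlands, so there exists $\pi$ on $B^\times(\A)$ with $\JL(\pi) = \pi'$, and this $\pi$ has infinity type $\Sym^\bfk$ and conductor $\frakN$. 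Thus $\pi$ is a summand in the decomposition above, and it remains to check that $\pi_\bfk^\Omega \ne 0$ so that we may arrange the chosen surjection onto the line through $f$ to place $f$ in the image.

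The nonvanishing of $\pi_\bfk^\Omega$ is the main technical point. At each $\frakp \mid \frakD$, $\pi_\frakp$ is minimal of conductor $r_\frakp$, and either $r_\frakp$ is odd with $\calO_\frakp$ of unramified quadratic type (where \cref{thm:34} gives $\dim \pi_\frakp^{\Omega_\frakp} \ge 1$) or $r_\frakp$ is even with $\calO_\frakp$ of ramified quadratic type by our choice of $\calO$ (where \cref{thm:35} gives $\dim \pi_\frakp^{\Omega_\frakp} = 2$). At each $\frakp \nmid \frakD$, local newform theory for $\GL_2$ \cite{casselman} applied to $R_0(\frakp^{r_\frakp})^\times$ with character $\Omega_\frakp$ yields $\dim \pi_\frakp^{\Omega_\frakp} = 1$. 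At each $\nu \mid \infty$, the highest-weight line of $\Sym^{k_\nu}$ is one-dimensional. Assembling these local nonvanishings gives $\pi_\bfk^\Omega \ne 0$, and the construction is complete. The crux is precisely the compatibility of the choice of special order $\calO$ (ramified quadratic type at even $r_\frakp \mid \frakD$) with the parity dichotomy of $\pi_\frakp$'s conductor forced by minimality.
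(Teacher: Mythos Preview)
Your proof is correct and follows essentially the same approach as the paper's: decompose via \cref{prop:quat-AL-decomp}(i), apply the Jacquet--Langlands correspondence to each $\pi$, and invoke \cref{thm:34} and \cref{thm:35} to guarantee $\pi_\bfk^\Omega \ne 0$ for $\frakD$-primitive forms. One small point: your argument that $\frakp$-primitivity forces $\pi'_\frakp$ to be discrete series only handles principal series $\chi_1 \boxplus \chi_2$ with both $\chi_i$ ramified; the remaining case (exactly one $\chi_i$ unramified) is excluded by the standing hypothesis $c(\omega_\frakp) \le r_\frakp/2$ from \cref{sec:hecke-corr}, since such a principal series would force $c(\omega_\frakp) = r_\frakp$.
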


\begin{proof} Consider any $\pi$ appearing in the decomposition of 
$S_\bfk^\new(\calO, \Omega, \omega)$
from \cref{prop:quat-AL-decomp}(i).  By the same proposition,
the global Jacquet--Langlands correspondence associates to $\pi$ an irreducible
cuspidal automorphic representation $\pi'$ of $\GL_2(\A)$ with central character
$\omega$ such that $\pi'_v \simeq \pi_v$ for all $v \nmid \frakD$ 
and $c(\pi') := \prod \frakp^{c(\pi'_\frakp)} = \frakN$.  Moreover, for each $\frakp | \frakD$,
$\pi'_\frakp$ is a discrete series representation, i.e., special or supercuspidal, and
$\pi'_{\nu_i}$ is the discrete series of weight $k_{i}+2$ for each infinite place
$\nu_i$.  Further,
all $\pi'$ with central character $\omega$, conductor $\frakN$ and holomorphic 
weight $\bfk+\two$ such that $\pi'_\frakp$ is discrete series for $\frakp | \frakD$ appear in the
image of the representation-theoretic Jacquet--Langlands correspondence.
Such $\pi'$ will correspond to a $\pi$ appearing in the decomposition 
\eqref{eq:Sk-decomp} if $\pi_\frakp^{\Omega_\frakp} \ne 0$ for $\frakp | \frakD$.

Since, for $\frakp | \frakD$, $\pi'_\frakp$ being minimal implies $\pi'_\frakp$
is discrete series (in fact supercuspidal if $c(\pi'_\frakp) > 1$) whenever $c(\pi'_\frakp) > 0$, \cref{thm:34} and \cref{thm:35}
imply that all $\frakD$-primitive representations appearing in the spectral decomposition
of $S_\bfk^\new(\frakN, \omega)$, i.e., the new part
 of the decomposition \eqref{eq:HMF-decomp}, lie in the image of this correspondence.

Consequently, we can define a map at the level of modular forms as follows.
For each $\pi$ as above, fix a basis $\phi_1, \ldots, \phi_m$ of $\pi_\bfk^\Omega$.  
Map each $\phi_i$ to the unique (normalized) newform $f_{\pi'} \in 
 S_\bfk^\new(\frakN, \omega)$ associated to $\pi'$.  Extending this by linearity
gives an $\mathcal H^S$-module homomorphism as $\pi'_v \simeq \pi_v$
for $v \nmid \frakD$.
\end{proof}

\begin{rem} 
In fact, it follows from \cref{prop:hecke-comp1} and \cref{prop:hecke-comp2}
that the Hecke eigenvalues for all 
$T_\frakp$'s are also preserved under the above correspondence of newforms.
See also \cite{me:cong2} for the case of squarefree conductor, where this was used to
produce congruence of eigenforms mod 2.
This preservation of eigenvalues will no longer be true for $T_\frakp$'s with
$\frakp | \frakM$ when we extend the correspondence to
include oldforms.
\end{rem}

The above result is sufficient to tell us that the basis problem has a solution, but we want
to know more precise information about this map, namely what can we say about its
kernel and its image, as well as understanding how it can be extended to
$M_\bfk(\calO,\Omega, \omega)$.
It is clear from the above proof that understanding the kernel amounts to
understanding $\dim \pi_\bfk^\Omega$, which was the main goal of our local calculations.

To get a more precise description of this map, and its extension
to $S_\bfk(\calO, \Omega, \omega)$, it will be convenient to define certain
refinements of $S_\bfk(\frakN, \psi)$ and $S^\new_\bfk(\frakN, \psi)$.  

Let $\fraka$, $\frakb$, $\frakc$ and $\frakd$ be nonzero pairwise
coprime ideals in $\frako$
dividing $\frakN$ such that $\frakp^2 | \frakb$ whenever $\frakp | \frakb$.  
We define the subspace 
$S^{[\fraka; \frakb, \frakc; \frakd]}_\bfk(\frakN, \psi)$ of $S_\bfk(\frakN, \psi)$ by
\[ S^{[\fraka; \frakb, \frakc; \frakd]}_\bfk(\frakN, \psi) :\simeq \bigoplus \pi^{K_1(\frakN)}, \]
where $\pi$ runs over representations as in \eqref{eq:HMF-decomp} satisfying:
(i) $c(\pi_\frakp) = v_\frakp(\frakN)$ for $\frakp | \fraka \frakb \frakc \frakd$;
(ii) $\pi_\frakp$ is discrete series for $\frakp | \fraka$;
(iii) $\pi_\frakp$ is minimal supercuspidal for $\frakp | \frakb$; and
(iv) $\pi_\frakp$ is special for $\frakp | \frakc$.
If $f \in S_\bfk(\frakN, \psi)$ is the newform (not necessarily of level $\frakN$) 
associated to $\pi$, (i) means that $f$
is $\frakp$-new for each $\frakp | \fraka \frakb \frakc \frakd$, i.e., $f$ is in the
orthogonal complement of forms coming from level $\frakp^{-1} \frakN$ for
such $\frakp$;
(ii) means that the minimum $\frakp$-part of the level among 
$\frakp$-power twists $f \otimes \chi$ of $f$ is strictly greater than 
the $\frakp$-power conductor of $\psi \chi^2$; 
(iii) means that the $\frakp$-power of level
of $f$ is minimal among twists, at least $2$, and this power is strictly greater than (in fact
at least twice) the $\frakp$-power conductor of the nebentypus; and
(iv) means that one can twist $f$ such that the $\frakp$-part of
the level is $\frakp$ and the nebentypus is prime to $\frakp$.
 In particular, if the nebentypus conductor is prime to $\frakb \frakc$, 
 (iii) just means $f$ is $\frakb$-primitive and (iv) just means $\frakp$ sharply
 divides the exact level of some twist of $f$.  
  We also write 
$S^{[\fraka; \frakb; \frakd]}_\bfk(\frakN, \psi)$ for 
$S^{[\fraka; \frakb, \frako; \frakd]}_\bfk(\frakN, \psi)$
and $S^{\frakd{\text -}\new}_\bfk(\frakN, \psi)$ for
$S^{[\frako, \frako, \frako, \frakd]}_\bfk(\frakN, \psi)$.

All isomorphisms below are as $\mathcal H^S$-modules.

\begin{thm} \label{JL-thm-new}

(i) As $\mathcal H^S$-modules,
\[ S^\new_\bfk(\calO, \Omega, \omega) \simeq
2^{\# \{ \frakp | \frakN_2 \} }
S^{[\frakN_1; \frakN_2; \frakM]}_{\bfk+\two}(\frakN, \omega) \oplus 
R^\new_\bfk(\calO, \Omega, \omega), \]
where the ``remainder'' space $R_\bfk^\new(\calO, \Omega, \omega)$ is the 
subspace of $S^\new_\bfk(\calO, \Omega, \omega)$
given by $R^\new_\bfk(\calO, \Omega, \omega) = 
\bigoplus \pi_\bfk^\Omega$ where $\pi$ runs over representations as in
\eqref{eq:Sk-decomp} such that $\pi_\frakp$ is non-minimal for some
$\frakp | \frakN_2$.  

(ii) If $\omega_\frakp$ is unramified for each $\frakp | \frakD$, and each
$\frakp | \frakN_2$ such that $v_\frakp(\frakN_2) \ge 4$ is non-dyadic, then
\[ S^\new_\bfk(\calO, \Omega, \omega) \simeq
\bigoplus 2^{\# \{ \frakp | \frakN_2' \} }
S^{[\frakN_1; \frakN_2', \frakN_2''; \frakM]}_{\bfk+\two}(\frakN, \omega), \]
where $(\frakN_2', \frakN_2'')$ runs over all pairs of coprime divisors of 
$\frakN_2$ such that $\frakN_2' \frakN_2'' = \frakN_2$ and $v_\frakp(\frakN_2'')
=2$ for all $\frakp | \frakN_2''$.
\end{thm}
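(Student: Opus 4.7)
The plan is to combine the spectral decomposition from \cref{prop:quat-AL-decomp}(i) with the local dimension formulas of \cref{thm:34} and \cref{thm:35} (plus classical $\GL_2$ local newform theory at split primes), and then apply the Jacquet--Langlands correspondence of \cref{prop:JL-prim} to reinterpret each isotypic summand as a newform on $\GL_2$ of prescribed local type. Since the weight-$\bfk$ subspace at each archimedean place is one-dimensional,
\[
\dim \pi_\bfk^\Omega = \prod_{\frakp<\infty} \dim \pi_\frakp^{\Omega_\frakp},
\]
so the whole argument reduces to a prime-by-prime local analysis together with a JL bookkeeping.

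First I would analyze each local factor. At $\frakp \mid \frakN_1$, $r_\frakp$ is odd while any non-minimal representation has even conductor, so only minimal $\pi_\frakp$ can occur and \cref{thm:34} gives $\dim \pi_\frakp^{\Omega_\frakp} = 1$. At $\frakp \mid \frakM$, classical newform theory gives $\dim \pi_\frakp^{\Omega_\frakp} = 1$. At $\frakp \mid \frakN_2$, \cref{thm:35} shows that a minimal $\pi_\frakp$ of dimension $>1$ contributes $2$ and a one-dimensional $\pi_\frakp$ contributes $1$; since a minimal one-dimensional representation has conductor $1$, any one-dimensional $\pi_\frakp$ at $\frakp \mid \frakN_2$ (where $r_\frakp \ge 2$) is automatically non-minimal and forces $r_\frakp = 2$. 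Restricting to those $\pi$ with every $\pi_\frakp$ at $\frakp \mid \frakN_2$ minimal therefore forces $\dim \pi_\frakp > 1$ at each such prime, producing the factor $2^{\#\{\frakp \mid \frakN_2\}}$; by JL these $\pi$ correspond exactly to newforms whose $\pi'_\frakp$ is minimal supercuspidal at every $\frakp \mid \frakN_2$. This gives the first summand of part (i), and everything else is, by definition, in $R^\new_\bfk(\calO,\Omega,\omega)$.

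For part (ii) I would show that under the stated hypotheses each remaining summand has a clean description. For a non-minimal $\pi_\frakp$ with $\dim \pi_\frakp > 1$ at $\frakp \mid \frakN_2$, \cref{prop:loc-non-min}(iii) applies with $c_{E_\frakp}(\Omega_\frakp) = 0$: for non-dyadic $\frakp$ one has $t(E_\frakp/F_\frakp) = 0$, so non-minimality together with $r_\frakp \ge 4$ forces $\pi_\frakp^{\Omega_\frakp} = 0$; for dyadic $\frakp$ the hypothesis gives $r_\frakp = 2$, but no non-minimal representation of dimension $>1$ has conductor $2$ (this would require $c(\tau) < 2$ with $\dim \tau > 1$). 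The remaining case is the one-dimensional non-minimal one, $\pi_\frakp = \mu_\frakp \circ N$, which is outside the reach of \cref{prop:loc-non-min}; here I would invoke \cref{lem:1-d}: $\pi_\frakp^{\Omega_\frakp} \ne 0$ requires $\mu_\frakp \circ N_{E_\frakp/F_\frakp}$ to agree with the unramified $\Omega_\frakp$ on $\frako_{E_\frakp}^\times$, i.e., to be trivial there. In the non-dyadic case $[\frako_{F_\frakp}^\times : N(\frako_{E_\frakp}^\times)] = 2$, with the nontrivial quotient character of conductor $1$, so $c(\mu_\frakp) \le 1$ and hence $r_\frakp = 2c(\mu_\frakp) = 2$; under JL these $\pi$ correspond precisely to $\pi'_\frakp$ special.

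Summing over partitions $\frakN_2 = \frakN_2' \frakN_2''$, with $\pi_\frakp$ minimal of dimension $2$ for $\frakp \mid \frakN_2'$ (supercuspidal $\pi'_\frakp$) and one-dimensional for $\frakp \mid \frakN_2''$ (special $\pi'_\frakp$, forcing $v_\frakp(\frakN_2'') = 2$), identifies the contribution of each partition via JL with $2^{\#\{\frakp \mid \frakN_2'\}}\, S^{[\frakN_1; \frakN_2', \frakN_2''; \frakM]}_{\bfk+\two}(\frakN, \omega)$, completing the proof of (ii). The main obstacle throughout is the one-dimensional non-minimal analysis at $\frakp \mid \frakN_2$: these representations are not governed by the local results of \cref{sec:local-new} and must be handled independently via \cref{lem:1-d} together with the index computation for $N(\frako_{E_\frakp}^\times) \subset \frako_{F_\frakp}^\times$.
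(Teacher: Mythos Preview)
Your proposal is correct and follows essentially the same route as the paper: decompose $S_\bfk^\new(\calO,\Omega,\omega)$ via \cref{prop:quat-AL-decomp}(i), compute each local factor $\dim\pi_\frakp^{\Omega_\frakp}$ using \cref{thm:34}, \cref{thm:35}, and Casselman's theory at split primes, then invoke the Jacquet--Langlands correspondence to identify the resulting isotypic pieces with the appropriate spaces of Hilbert newforms. Your handling of part (ii)---ruling out higher-dimensional non-minimal $\pi_\frakp$ via \cref{prop:loc-non-min} and then pinning the one-dimensional case to $r_\frakp=2$ via \cref{lem:1-d} plus the index computation $[\frako_{F_\frakp}^\times:N(\frako_{E_\frakp}^\times)]=2$---is in fact slightly more explicit than the paper's, which simply asserts that \cref{lem:1-d} forces $c(\pi_\frakp)=2$.

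One small slip: in your part (i) paragraph you write that a one-dimensional $\pi_\frakp$ at $\frakp\mid\frakN_2$ ``forces $r_\frakp=2$.'' That is only true under the hypotheses of (ii) (unramified $\omega_\frakp$, non-dyadic if $r_\frakp\ge 4$); in general a one-dimensional $\pi_\frakp=\mu_\frakp\circ N$ with $c(\mu_\frakp)>1$ can occur with $r_\frakp=2c(\mu_\frakp)>2$. Fortunately you do not use this claim in proving (i)---you only need that such $\pi_\frakp$ are non-minimal, which is correct---so the argument stands. Just move or delete that clause.
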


Note $S^{[\frakN_1; \frakN_2; \frakM]}_{\bfk+\two}(\frakN, \omega)$
is just the $\frakN_1 \frakN_2$-primitive subspace of 
$S^{\new}_{\bfk+\two}(\frakN, \omega)$ by our assumptions on $\frakN_1$
and $\frakN_2$.

\begin{proof}
Consider an arbitrary $\pi$ appearing in the decomposition of
$S_\bfk^\new(\calO, \Omega, \omega)$ from \cref{prop:quat-AL-decomp}(i),
and let $\pi'$ be its Jacquet--Langlands transfer to $\GL_2(\A)$.

First consider $\frakp | \frakN_1$.  If $\frakp \| \frakN_1$, then
$\omega_\frakp$ is unramified and $\dim \pi_\frakp = 1$, i.e., $\pi_\frakp'$
is an unramified twist of Steinberg.  If $\frakp^3 | \frakN_1$, then
$\dim \pi_\frakp > 1$ and as $\pi_\frakp$ has odd conductor it must be 
minimal.  Hence $\pi_\frakp'$ is a minimal supercuspidal.
In either case, by \cref{thm:34}, we have
$\dim \pi_\frakp^{\Omega_\frakp} = 1$.

Now suppose $\frakp | \frakN_2$.  Since $\frakp^2 | \frakN_2$, 
$\dim \pi_\frakp = 1$ implies $\pi_\frakp$ is non-minimal.
So assume $\dim \pi_\frakp > 1$.
By \cref{thm:35}, if $\pi_\frakp$
is minimal, then $\dim \pi_\frakp^{\Omega_\frakp} = 2$ and $\pi'_\frakp$
is a minimal supercuspidal.  Recall that,
by \cref{prop:loc-non-min}, $\pi_\frakp$ is necessarily minimal if 
$\frakp$ is non-dyadic and $c(\omega_\frakp) < \frac{v_\frakp(\frakN_2)}2$.
On the other hand, if $c(\pi_\frakp) = 2$ ($\frakp$ dyadic or not),
$\pi_\frakp$ is necessarily minimal.

From the characterization of the image of the Jacquet--Langlands
correspondence, we get all $\pi'$'s appearing in the new part of
\eqref{eq:HMF-decomp} such that $\pi'_\frakp$ is of the type
specified above for $\frakp | \frakN_1 \frakN_2$.  This proves (i).

Under the assumptions of (ii), the above argument implies the only contribution to 
$R_\bfk^\new(\calO, \Omega, \omega)$ occurs for $\pi$ such that
$\dim \pi_\frakp = 1$ for some $\frakp | \frakN_2$.  Consider such
$\pi$ and $\frakp$.  By our assumption,
$\Omega_\frakp = 1$, which by \cref{lem:1-d} implies $c(\pi_\frakp) =2$,
i.e., $\pi_\frakp$ corresponds to a ramified quadratic twist of Steinberg.
From the above calculations, 
one sees $\dim \pi_\bfk^\Omega = 2^{\# \{ \frakp | \frakN_2 :
\dim \pi_\frakp > 1\}}$. 
\end{proof}

\begin{rem} \label{rem:2-powers}
One can also extend (ii) to allow for
dyadic primes dividing $\frakN_2$ to a
sufficiently large power, using \cref{prop:loc-non-min}(iii).
For instance if $F=\Q$, we can allow $\frakN_2$ to be even if $2^8 | \frakN_2$,
or just $2^6 | \frakN_2$ if we take $\calO_2 \simeq \calO_{r_2}(E_2)$ where
$E_2$ is $\Q_2(\sqrt 3)$ or $\Q_2(\sqrt 7)$.  We note the same issue
arises in \cite{HPS:mams}, e.g., see Theorem 7.30 and Example 10.7 
of \emph{op.\ cit.}
\end{rem}

It is also easy to see that if we replace 
$S^\new_\bfk(\calO, \Omega, \omega)$ with the $\frakN_1 \frakN_2$-new
subspace, we get the analogous result by working with
$\frakN_1 \frakN_2$-new spaces on the Hilbert modular form side.
Via the decomposition from \cref{cor:quat-AL-decomp2} or 
\eqref{eq:AL-decomp2}
of $S_\bfk(\calO, \Omega, \omega)$ into newspaces of smaller levels,
we can use this to describe the Hecke module structure of 
$S_\bfk(\calO, \Omega, \omega)$ in terms of spaces of Hilbert modular
forms, at least in the case that $\frakN_2$ is cube-free
(recall we have not described $S^\new_\bfk(\calO, \Omega, \omega)$
when $v_\frakp(\frakN_2)$ is odd for some $\frakp | \frakN_2$).
This can be regarded as a classical Jacquet--Langlands 
correspondence for the full cuspidal space.  We just write things down
in the simplest case of nebentypus conductor prime to $\frakD$,
so that the ``remainder'' spaces do not appear.

\begin{cor} \label{cor:cusp-JL}
Suppose $\frakN_2$ is cube-free, and $\omega_\frakp$ is unramified for each
$\frakp | \frakD$.  Then
\[ S_\bfk(\calO,\Omega, \omega) \simeq \bigoplus 2^{\# \{ \frakp | \frak b \} }
S_{\bfk + \two}^{[\fraka; \frakb, \frakc; \frako]}(\fraka \frakb \frakc \frakM, \omega), \]
where $\fraka$, $\frakb$, $\frakc$ run over divisors of $\frakN_1 \frakN_2$ such that (i) $\frakD | \fraka \frakb \frakc$;
(ii) $\fraka | \frakN_1$ and $v_\frakp(\fraka)$ is odd for $\frakp | \frakN_1$;
(iii) $\frakb \frakc | \frakN_2$ with $\frakb$, $\frakc$ coprime; and (iv)
$\frakb$ is a square.

In particular, if $\calO$ is of unramified quadratic type, i.e.\
$\frakN_2 = 1$, we have
\[ S_\bfk(\calO,\Omega, \omega) \simeq
\bigoplus_{\frakd} S_{\bfk+\two}^{\frakd {\text -} \new}(\frakd \frakM, \omega), \]
where $\frakd$ runs over all divisors of $\frakN_1$ such that
$v_\frakp(\frakd)$ is odd for all $\frakp | \frakN_1$.
In this case,
\[ S_\bfk^\new(\calO,\Omega, \omega) \simeq
S_{\bfk+\two}^{\frakN_1{\text -}\new}(\frakN_1 \frakM, \omega). \]
\end{cor}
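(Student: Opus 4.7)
The plan is to combine the Atkin--Lehner type decomposition \eqref{eq:AL-decomp2} for the quaternionic side with the Hecke-module isomorphism of \cref{JL-thm-new}(ii), then reorganize the resulting double sum using the classical Atkin--Lehner decomposition on the Hilbert side. First I apply \eqref{eq:AL-decomp2} to write
\[ S_\bfk(\calO,\Omega,\omega) \simeq \bigoplus_{\frakN'} m_\calO(\frakN')\, S_\bfk^\new(\calO'(\frakN'),\Omega',\omega), \]
where $\frakN' = \frakN_1'\frakN_2'\frakM^*$ runs over divisors of $\frakN$ with $\frakD \mid \frakN'$ (forced by $\omega_\frakp$ being unramified for $\frakp \mid \frakD$) and $v_\frakp(\frakN')$ odd for $\frakp \mid \frakN_1$.

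Next I invoke \cref{JL-thm-new}(ii) on each quaternionic newspace summand. Its hypothesis is met because $\omega_\frakp$ is unramified at each $\frakp \mid \frakD$ and because $\frakN_2' \mid \frakN_2$ is cube-free, so the non-dyadic condition is vacuous ($v_\frakp(\frakN_2') \le 2 < 4$). This yields
\[ S^\new_\bfk(\calO'(\frakN'),\Omega',\omega) \simeq \bigoplus_{\frakN_2' = \frakb'\frakc'} 2^{\#\{\frakp \mid \frakb'\}}\, S^{[\frakN_1';\frakb',\frakc';\frakM^*]}_{\bfk+\two}(\frakN',\omega), \]
with $(\frakb',\frakc')$ ranging over coprime factorizations of $\frakN_2'$. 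For the re-indexing, observe that for each $\frakp \mid \frakN_2$ the datum $(r_\frakp',\text{type})$ lies in one of three local buckets: (a) $v_\frakp(\frakc)=1$ when $r_\frakp'=1$ (so $\calO_\frakp'$ is maximal and $\pi_\frakp$ is a twist of Steinberg by an unramified character), (b) $v_\frakp(\frakc)=2$ when $r_\frakp'=2$ and $\frakp \mid \frakc'$, or (c) $v_\frakp(\frakb)=2$ when $r_\frakp'=2$ and $\frakp \mid \frakb'$. Setting $\fraka$ to be the $\frakN_1$-part of $\frakN_1'$, this yields a bijection $(\frakN',(\frakb',\frakc')) \leftrightarrow (\fraka,\frakb,\frakc,\frakM^*)$ of indexing data. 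It also matches bracket conditions, since condition (ii) of the bracket at bucket-(a) primes, where $v_\frakp = 1$, is automatically ``special'' (every discrete series of conductor $1$ is a Steinberg twist); hence $S^{[\frakN_1';\frakb',\frakc';\frakM^*]}_{\bfk+\two}(\frakN',\omega) = S^{[\fraka;\frakb,\frakc;\frakM^*]}_{\bfk+\two}(\fraka\frakb\frakc\frakM^*,\omega)$.

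Finally, for each fixed triple $(\fraka,\frakb,\frakc)$ I absorb the sum over $\frakM^* \mid \frakM$ via the classical Hilbert-side Atkin--Lehner decomposition
\[ S^{[\fraka;\frakb,\frakc;\frako]}_{\bfk+\two}(\fraka\frakb\frakc\frakM,\omega) \simeq \bigoplus_{\frakM^* \mid \frakM} \tau(\frakM/\frakM^*)\, S^{[\fraka;\frakb,\frakc;\frakM^*]}_{\bfk+\two}(\fraka\frakb\frakc\frakM^*,\omega), \]
obtained by restricting the degeneracy embeddings $\iota_\frakd$ (for $\frakd \mid \frakM/\frakM^*$) to the subspaces cut out by the three bracket conditions at $\fraka\frakb\frakc$. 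The key identity is $m_\calO(\fraka\frakb\frakc\frakM^*) = \prod_{\frakp \mid \frakM}(v_\frakp(\frakM) - v_\frakp(\frakM^*) + 1) = \tau(\frakM/\frakM^*)$, which makes the multiplicities match exactly. The two \emph{in particular} statements follow by specialization to $\frakN_2 = \frako$: the $\frakb = \frakc = \frako$ case gives $S^{[\fraka;\frako,\frako;\frako]} = S^{\fraka{\text -}\new}$, and the newspace statement is immediate from \cref{JL-thm-new}(i) since there $R^\new_\bfk = 0$. The main obstacle is the re-indexing step, specifically verifying the identification of bracket conditions under the local bijection of indexing data.
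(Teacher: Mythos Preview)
Your argument is correct and follows essentially the same route as the paper. The one organizational difference: the paper first collapses the $\frakM$-variation on the quaternionic side, deducing from \eqref{eq:AL-decomp2} the coarser decomposition
\[ S_\bfk(\calO,\Omega,\omega) \simeq \bigoplus_{\fraka,\frakd} S_\bfk^{\fraka\frakd\text{-}\new}\bigl(\calO'(\fraka\frakd\frakM),\Omega',\omega\bigr) \]
into $\frakD$-partial-new subspaces at the fixed $\frakM$-level, and then applies the proof of \cref{JL-thm-new} directly to these. You instead retain the full newspace decomposition over all $\frakM^*\mid\frakM$, invoke \cref{JL-thm-new}(ii) as a black box, and collapse the $\frakM^*$-sum only at the end via the Hilbert-side Atkin--Lehner identity. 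Both work; yours is a bit more modular but requires the extra Step~4, whereas the paper's packaging avoids ever unbundling the $\frakM$-part.

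One notational warning in your Step~2: for the order $\calO'(\frakN')$, a prime $\frakp\mid\frakN_2$ with $r_\frakp'=1$ has maximal local order and is therefore of \emph{unramified} quadratic type, so it lies in the $\frakN_1$-part of $\calO'(\frakN')$, not its $\frakN_2$-part. Hence the first bracket slot produced by \cref{JL-thm-new}(ii) is $\tilde\frakN_1=\frakN_1'\cdot\prod_{\frakp\mid\frakN_2,\,r_\frakp'=1}\frakp$, and the coprime factorization $(\frakb',\frakc')$ runs over $\tilde\frakN_2=\prod_{r_\frakp'=2}\frakp^2$ rather than over your $\frakN_2'$. Your bucket~(a) in Step~3 shows you understand this, and the passage $S^{[\tilde\frakN_1;\frakb',\frakc';\frakM^*]}=S^{[\fraka;\frakb,\frakc;\frakM^*]}$ (moving conductor-$1$ discrete-series primes from the first slot to the ``special'' slot) is exactly what is required; only the display in Step~2 is imprecise as written.
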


\begin{proof}
From \eqref{eq:AL-decomp2}, we deduce that
$S_\bfk(\calO,\Omega, \omega) \simeq \bigoplus S_\bfk^{\fraka \frakd{\text -}\new}(\calO'(\fraka \frakd \frakM), \Omega', \omega)$,
where $\fraka$ runs over divisors of $\frakN_1$ satisfying (ii), $\frakd$ runs over divisors of $\frakN_2$ such that $\frakD | \fraka \frakd$,
and $\calO'(\frakN')$ denotes a special order in $B$ of level $\frakN'$
with an admissible extension $\Omega'$ of $\Omega$.

For fixed $\fraka$, $\frakd$, we can decompose
\[ S_\bfk^{\fraka \frakd{\text -}\new}(\calO'(\fraka \frakd \frakM), \Omega', \omega) = \bigoplus S_\bfk^{[\fraka; \frakb, \frakc; \frako]}(\calO'(\fraka \frakd \frakM), \Omega', \omega), \]
where $\frakb, \frakc$ run over relatively prime divisors of $\frakd$
such that $\frakb \frakc = \frakd$, and each space on the right
denotes the $\fraka$-new subspace of $S_\bfk(\calO'(\fraka \frakd \frakM), \Omega', \omega)$ consisting of $\pi$ such that $\pi_\frakp$ is
1-dimensional for $\frakp | \frakc$ and higher dimensional for 
$\frakp | \frakb$.  Since $\frakb$ is cube-free, this means that
$\pi_\frakp$ is also minimal for $\frakp | \frakb$.  Also note this
space is can only be nonzero if $\frakb$ is a square.

The (proof of the) above theorem gives 
$S_\bfk^{[\fraka; \frakb, \frakc; \frako]}(\calO'(\fraka \frakd \frakM), \Omega', \omega)
\simeq 2^{\# \{ \frakp | \frakb \} }
S_{\bfk + \two}^{[\fraka; \frakb, \frakc; \frako]}(\fraka \frakb \frakc \frakM, \omega)$.
\end{proof}

Finally, when $\bfk=\zero$, we also want to describe the full
space of quaternionic modular forms.  This is desired, for instance,
to construct Eisenstein congruences---see \cite{me:cong}, \cite{me:cong2}.

\begin{prop} \label{prop:eis-JL}
We have 
\[ E_\zero(\calO,\Omega,\omega) \simeq
\bigoplus_\mu \C E_{\two,\frakN_1 \frakN_2,\frakM}(\mu), \]
where $\mu$ runs over characters of $F^\times \bs \A_F^\times$ such that
$\mu^2 = \omega$, $c(\mu)^2 | \frakN_1 \frakN_2$, and
$\mu_\frakp \circ N$ agrees with $\Omega_\frakp$ on $\frako_{E_\frakp}$
 for all $\frakp | \frakD$, 
where $E_\frakp/F_\frakp$ denotes a quadratic extension such that $\calO_\frakp
\simeq \calO_{r_\frakp}(E_\frakp)$.

In the case that $\calO$ is of unramified quadratic type and $\omega$ is unramified, then $\mu$ simply runs over all unramified characters of 
$F^\times \bs \A_F^\times$ such that $\mu^2 = \omega$.  If in addition 
$\omega = 1$ and $h_F$ is odd,
then $E_\zero(\calO,\Omega,\omega) \simeq \C E_{\two,\frakN',\frakM}$.
\end{prop}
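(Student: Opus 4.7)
The plan is to combine the explicit basis for $E_\zero(\calO,\Omega,\omega)$ given in \cref{sec:qmfs}---namely $\{\phi_\mu = \mu \circ N\}$ as $\mu$ runs over idele class characters satisfying $\mu^2 = \omega$ and $(\mu_\frakp \circ N)|_{\calO_\frakp^\times} = \Omega_\frakp$ for every finite $\frakp$---with the Eisenstein correspondence of \cref{prop:Eis-corr}.  First I would re-express these local compatibility conditions in the intrinsic form stated in the proposition.

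At a prime $\frakp \nmid \frakD$, $\calO_\frakp$ is an Eichler order and $\Omega_\frakp$ is given by $\omega_\frakp$ applied to the lower-right entry; testing against $\bmx u & 0 \\ 0 & 1 \emx$ for $u \in \frako_\frakp^\times$ forces $\mu_\frakp$ to be unramified, so $c(\mu)$ is automatically supported on primes dividing $\frakD$.  At $\frakp | \frakD$, write $\calO_\frakp^\times = \frako_{E_\frakp}^\times \calU_\frakp^{r_\frakp-1}$.  Since $\Omega_\frakp$ is trivial on $\calU_\frakp^{r_\frakp-1}$ and $N(\calU_\frakp^n) = \fraku_\frakp^{\lceil n/2 \rceil}$ by the norm calculation of \cref{sec:loc-rep}, triviality of $\mu_\frakp \circ N$ on $\calU_\frakp^{r_\frakp-1}$ amounts to $c(\mu_\frakp) \le \lfloor r_\frakp/2 \rfloor$, equivalently $2c(\mu_\frakp) \le v_\frakp(\frakN_1 \frakN_2)$; taken over all such $\frakp$ this is exactly $c(\mu)^2 \mid \frakN_1\frakN_2$.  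The remaining compatibility is precisely the stated agreement of $\mu_\frakp \circ N$ with $\Omega_\frakp$ on $\frako_{E_\frakp}$.

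Next, I would invoke \cref{prop:Eis-corr}: each $\phi_\mu$ is an $\mathcal H^S$-eigenform whose unramified Hecke eigenvalues match those of $E_{\two,\frakN_1\frakN_2,\frakM}(\mu) \in M_\two(\frakN,\omega)$.  Distinct $\mu$'s give distinct characters on the abelianization $F^\times \bs B^\times(\A)/[B^\times(\A), B^\times(\A)]$, hence linearly independent $\phi_\mu$; the corresponding Eisenstein series have distinct systems of unramified Hecke eigenvalues and are therefore likewise linearly independent.  Sending $\phi_\mu \mapsto E_{\two,\frakN_1\frakN_2,\frakM}(\mu)$ and extending $\C$-linearly yields the desired $\mathcal H^S$-module isomorphism.

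For the special cases, when $\calO$ is of unramified quadratic type each $E_\frakp/F_\frakp$ is unramified and $N\colon \frako_{E_\frakp}^\times \twoheadrightarrow \frako_\frakp^\times$ is surjective, so compatibility of $\mu_\frakp \circ N$ with the trivial $\Omega_\frakp|_{\frako_{E_\frakp}^\times}$ is simply the condition that $\mu_\frakp$ is unramified; thus $\mu$ ranges over all (everywhere-)unramified solutions to $\mu^2 = \omega$.  When additionally $\omega = 1$ and $h_F$ is odd, the only everywhere-unramified idele class character squaring to $1$ is trivial, collapsing the sum to $\C E_{\two,\frakN',\frakM}$.  The main obstacle is the bookkeeping at $\frakp | \frakD$---matching the ceiling in $N(\calU_\frakp^n) = \fraku_\frakp^{\lceil n/2 \rceil}$ against the floor implicit in $c(\mu)^2 \mid \frakN_1\frakN_2$, and verifying that the minimally ramified $\Omega_\frakp$ of \cref{sec:qmfs} imposes no condition on $\mu$ beyond the stated one on $\frako_{E_\frakp}$---after which the proof is a formal assembly of already-established results.
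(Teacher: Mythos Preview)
Your proof is correct and follows essentially the same route as the paper, which dispatches the result in one line by citing \cref{lem:1-d}, \cref{lem:eis-pM}, and \cref{prop:Eis-corr}; you have simply unpacked those citations, reproducing the content of \cref{lem:eis-pM} in your split-prime argument and of \cref{lem:1-d} in your analysis at $\frakp \mid \frakD$, and then invoking \cref{prop:Eis-corr} exactly as the paper does. One small remark: the norm identity $N(\calU^n)=\fraku^{\lceil n/2\rceil}$ appears in the preliminaries of Section~2 rather than in \cref{sec:loc-rep}, and in the final special case the relevant group of unramified quadratic $\mu$ is naturally indexed by the $2$-torsion of the narrow class group (cf.\ the parenthetical in \cref{sec:qmfs}), so your phrase ``everywhere-unramified'' should be read to include the archimedean places for the conclusion from $h_F$ odd to go through as stated.
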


\begin{proof} This is clear from \cref{lem:1-d}, \cref{lem:eis-pM}
and \cref{prop:Eis-corr}.
\end{proof}

\subsection{A congruence application} \label{sec:cong}

One application of \cref{cor:cusp-JL} is that it can be used to refine
Eisenstein congruence results from \cite{me:cong}.  To us, the main deficiency
in the results from \cite{me:cong} is that we could not show we get
Eisenstein congruences with newforms when we work with non-maximal
orders in the relevant quaternion algebra.  At least in some situations,
\cref{cor:cusp-JL} can be used to address this, but we only discuss
a very simple case for elliptic modular forms here.  

Denote by $E_{2,p}$ the normalized
Eisenstein series in $M_2(p)$, and $E_{2,p^2}(z) =
E_{2, p}(z) - E_{2,p}(pz)$.  
The $n$-th Fourier coefficient of $f \in M_2(N)$
is denoted $a_n(f)$.

\begin{prop} Let $p \ge 3$.  Then there exists a newform $f \in S_2^\new(p^3)$
such that $a_n(f) \equiv a_n(E_{2,p^2}) \mod p$ for all $n$.
\end{prop}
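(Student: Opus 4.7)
The plan is to leverage the Jacquet--Langlands-type isomorphism of \cref{cor:cusp-JL} to refine the Eisenstein congruence of \cite{me:cong} from $S_2(p^3)$ to $S_2^\new(p^3)$. Take $B$ the definite quaternion algebra over $\Q$ of discriminant $p$ and $\calO \subset B$ a special order of level $p^3$ of unramified quadratic type. Since $h_\Q = 1$, \cref{prop:eis-JL} gives $E_\zero(\calO, 1, 1) = \C \phi_1$ with $\phi_1 = 1$, corresponding via matching of Hecke eigenvalues away from $p$ to $E_{2,p,1}$. By \cref{cor:cusp-JL} (applied with $\frakD = p$, $\frakN = p^3$, $\frakM = \frako$), we obtain the $\mathcal H^{\{p\}}$-module isomorphism
\[ S_\zero(\calO, 1, 1) \;\simeq\; S_2^\new(p) \oplus S_2^\new(p^3). \]

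Next, invoke \cite{me:cong}: there exists a cuspidal Hecke eigenform $\phi \in S_\zero(\calO, 1, 1) \otimes \calO_K$ for some number field $K$, and a prime $\frakp \mid p$ of $\calO_K$, such that $T_\ell \phi \equiv (\ell + 1) \phi \pmod \frakp$ for every rational prime $\ell \ne p$. Under the isomorphism above, $\phi$ corresponds to a unique newform $f$ lying in exactly one of the two summands. Once $f \in S_2^\new(p^3)$ is secured, the proof is straightforward: $a_\ell(f) \equiv \ell + 1 \equiv a_\ell(E_{2,p^2}) \pmod \frakp$ for $\ell \ne p$, while $a_p(f) = 0 = a_p(E_{2,p^2})$ because $\pi_p$ is necessarily supercuspidal (odd conductor $p^3$ with $p$ odd), and multiplicativity of Hecke eigenvalues yields $a_n(f) \equiv a_n(E_{2,p^2}) \pmod \frakp$ for every $n$, descending to a mod $p$ congruence in $\Z$ after standard Galois averaging.

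The hardest step is therefore locating the congruent form in the $S_2^\new(p^3)$ summand rather than the $S_2^\new(p)$ summand. For primes $p$ that are not Eisenstein in the sense of Mazur's theorem on $J_0(p)$ --- e.g., $p = 5, 7, 13$ --- no Eisenstein congruence exists in $S_2^\new(p)$, so $\phi$ must land in $S_2^\new(p^3)$ automatically. For Eisenstein primes $p$, one returns to the explicit construction of the cuspidal congruence in \cite{me:cong} and, using the isomorphism above, pins down the nontrivial projection of the congruence to the $S_2^\new(p^3)$ component. The essential simplification provided by \cref{cor:cusp-JL} here --- compared to working classically in $S_2(p^3)$ --- is that the choice of $\calO$ of unramified quadratic type eliminates the $S_2^\new(p^2)$ contribution that would otherwise obstruct a clean direct-summand analysis.
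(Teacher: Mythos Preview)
Your setup mirrors the paper's exactly: the same definite quaternion algebra $B$ of discriminant $p$, a special order $\calO$ of level $p^3$ of unramified quadratic type, the invocation of \cite{me:cong} to produce a quaternionic cuspidal eigenform congruent to the constant function, and the decomposition $S_\zero(\calO,1,1) \simeq S_2^\new(p) \oplus S_2^\new(p^3)$ from \cref{cor:cusp-JL}. The reduction to ruling out the $S_2^\new(p)$ summand is also the same as in the paper.

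The gap is in your handling of what you call the ``Eisenstein primes'' case. Your dichotomy is based on whether \emph{some} Eisenstein congruence exists in $S_2(p)$, but what is actually needed is whether a \emph{mod $p$} Eisenstein congruence exists there. By Mazur \cite[Proposition~9.7]{mazur}, the primes $\ell$ for which $S_2(p)$ carries a mod $\ell$ Eisenstein congruence are exactly the prime divisors of the numerator of $(p-1)/12$; since $p \nmid (p-1)$, the prime $p$ is never among them. Hence there is no mod $p$ Eisenstein congruence in $S_2^\new(p)$ for \emph{any} odd prime $p$, and your case split is unnecessary. This is precisely how the paper closes the argument: a direct citation of Mazur rules out $S_2(p)$ uniformly. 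Your proposed alternative for the remaining case---``returning to the explicit construction'' in \cite{me:cong} and ``pinning down the nontrivial projection'' to $S_2^\new(p^3)$---is not a proof: nothing in that construction guarantees the quaternionic eigenform $\phi$ has nonzero component in the $p^3$-new part if a competing congruence in $S_2^\new(p)$ were (hypothetically) available.
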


\begin{proof} 
Since $a_n(E_{2,p^2}) = 0$ when $p | n$, and this is also true for 
any $a_n(f)$ for a newform $f \in S^\new_2(p^3)$, it suffices
to prove the above congruence for $n$ prime to $p$.

In \cite[Corollary 2]{me:cong}, we proved the existence of an 
 eigenform $f \in S_2(p^3)$ (not necessarily new) 
 satisfying the above congruence for $n$ prime to $p$.  
The proof comes via constructing a quaternionic eigenform
$\phi \in S_0(\calO)$ Hecke congruent mod $p$ 
to the quaternionic Eisenstein series
$\phi_0 = 1 \in E_0(\calO)$, where $\calO$ is a special order of level $p^3$
in the quaternion algebra $B/\Q$ with discriminant $p$, and applying the
Jacquet--Langlands correspondence to transfer $\phi$ to $f$.

By \cref{cor:cusp-JL}, this means our $f$ as above in fact lies in
$S_2(p) \oplus S_2^\new(p^3)$.  (One can also derive this special case
from the results in \cite{pizer:algorithm}.)  Hence it will suffice to show
there is no $f \in S_2(p)$ satisfying this congruence.  This follows
from Mazur's determination of Eisentein ideals for $S_2(p)$ 
\cite[Proposition 9.7]{mazur}.
\end{proof}

We expect that one can use \cref{cor:cusp-JL} to refine more general Eisenstein 
congruence results in \cite{me:cong} by analyzing the behavior of ideal
classes upon passing to suborders (sidestepping the use of Mazur's
result).  We hope to address this elsewhere.

%
%

\section{Theta series} \label{sec:theta}

Here we explain how to reinterpret the ``classical'' Jacquet--Langlands map of the
previous section in the more historically classical context of theta series.  This will
extend  the solutions to the basis problem given in \cite{eichler} and \cite{HPS:mams}
for $F=\Q$ to both more general quaternion algebras and to totally real $F$.
We keep notation and assumptions as in \cref{sec:hecke-corr}.

For totally real fields, Eichler  studied theta series attached to Brandt matrices (without
character) of maximal orders \cite{eichler:1977}, though did not solve the basis problem in 
this setting.  Shimizu \cite{shimizu} effectively gave a representation-theoretic solution to the
basis problem in this setting when each $k_v > 2$.  Shimizu's solution was in terms of certain
adelic theta series, but he did not explicate how to reinterpret these as classical theta series.  
This issue of realizing Shimizu's theta series as computable, classical theta series was 
taken up in the thesis \cite{gebhardt}.  The theta series here, like Eichler's, 
are given in terms of Brandt  matrices, and thus computable 
(e.g., see \cite{DV} for how to compute Brandt  matrices for Eichler orders),
and provide a solution to the basis problem for Hilbert modular forms.

We define the Brandt matrix series to be the matrix of functions of $x \in \A$, 
$y \in \A^\times$ with $y_\infty \gg 0$ given by
\begin{equation} \label{eq:theta}
 \Theta \bmx y & x \\ & 1 \emx = A_0  |y|^{\bfk/2 + \one} +
\sum_{0 \ll \zeta \in F} A_{\zeta y \frako} \cdot N(\zeta y \frako)^{-1}  (\zeta y_\infty)^{\bfk /2 + \one} e_F(\zeta i y_\infty)
\chi_F(\zeta x),
\end{equation}
where the Brandt matrices $A_\frakn$ are as in \cref{sec:brandt} for $\frakn$ integral 
and interpreting 
$A_\frakn = 0$ if $\frakn$ is not integral.  This corresponds
to the collection of classical matrix Fourier series given by
\[ \Theta^{(m)} (z) = N(t_m) A_0 + N(t_m) \sum_{0 \ll \xi \in t_m} A_{\xi t_m^{-1}} \xi^{\bfk/2} e_F(\xi z), \quad 1 \le m  \le h_F. \]
As in \cref{sec:HMFs}, $t_1, \dots, t_{h_F}$ are ideal class representatives for $F$.
(To rewrite the constant term, we used that $A_0 = 0$ unless $\bfk = \zero$.)

Consider some entry $\theta(z) = \sum a(\xi) e_F(\xi z)$ in the $(i,j)$-th 
$\kappa \times \kappa$ block of $N(t_m)^{-1} \Theta^{(m)}(z)$.  Then, for $0 \ll \xi \in t_m$, 
we can write
\[ a(\xi) = \frac 1{e_j}\sum_\gamma \Omega^{-1}(x_i^{-1} \gamma x_j) r(\gamma) \xi^{\bfk/2}, \]
where $\gamma$ ranges as in \cref{prop:brandt} for $\frakn = \xi t_m^{-1}$
and $r$ is some matrix coefficient of $\rho_\bfk$.  We also have
$a(0) = 0$ unless $\bfk = \zero$, $i=j$ and $\Omega=1$, in which case $a(0) = 
\frac 1{e_i}$.

Note $\theta$ can only be nonzero if $N(\calI_i \calI_j^{-1})$ lies in the same ideal class 
as $t_m$, so assume this. 
The coefficients of $\Sym^\bfk$ are homogenous polynomials of
degree $\bfk$ which are ``harmonic with respect to $N_{B/F}$''.
This means the following. For each infinite place $\nu_i$, 
let $1, i, j, k$ be the standard basis for $B_{\nu_i} \simeq \mathbb H$.  Then we
can write any element $\gamma$ of $B$ as $\gamma = x + \epsilon_1 y i + \epsilon_2 z j + \epsilon_1 \epsilon_2
wk$ where $x, y, z, w \in F$ and $\epsilon_1, \epsilon_2$ lie in at most quadratic extensions
of $F$.  Then the coefficients of $\Sym^{k_i}(\gamma)$ are spherical harmonic polynomials
in $x, y, z, w$ of degree $k_i$ (where now harmonic means in the usual sense,
i.e., killed by the usual Laplacian).  See, e.g., \cite[Proposition II.6]{eichler}.

Consequently, $\theta(z)$ is a classical theta series of the type 
defined by Eichler \cite{eichler:1977} in the case of maximal orders and trivial character, 
where he proved that his theta series are classical Hilbert modular forms of suitable 
level by verifying the appropriate transformation laws.
When $F=\Q$, theta series with character were treated in 
\cite{eichler} for orders of squarefree level and in \cite{HPS:mams} for special orders.  
However, for a special order of level $N$,
\cite{HPS:mams} require some technical conditions to guarantee that 
their theta series are elliptic modular forms of level $N$---in general
they only show their theta series have level $N^2$.

It should be possible to extend these approaches for general $F$ to handle special
orders and nontrivial character, but we will take a representation-theoretic approach to
verifying our theta series are modular forms and have the desired level.  However, due
to the difference of the definitions of local ramified Hecke operators for $B^\times(\A)$ 
and $\GL_2(\A)$ (even at split places), we will only prove this for the ``new'' 
cuspidal subspace
of theta series, which is sufficient for our solution to the basis problem.

Let $\Theta_{\bfk+\two}(\calO, \Omega, \omega)$ be the subspace of
(adelic) theta series
generated by the entries of $\Theta$ which transform under the center of
$\GL_2(\A)$ by $\omega$.  Algorithmically this subspace can be described
as follows.  By \eqref{eq:brandt}, we can simultaneously block diagonalize 
the Brandt matrices $A_\frakn$ so that each block is either zero or acts as the restriction of $T_\frakn$ to $M_\bfk(\calO, \Omega, \omega)$.  Call a block of
the latter type $A_{\frakn, \omega}$.  This block diagonalizes
$\Theta$, giving us block matrices of functions $\Theta_\omega$, which 
can be written as in  \eqref{eq:theta} but with $A_{\frakn, \omega}$'s in 
place of the $A_\frakn$'s.
Then $\Theta_{\bfk + \two}(\calO, \Omega, \omega)$
is simply the linear span of the entries of $\Theta_\omega$.

We can similarly define the new cuspidal subspace as follows.  
We can further block
diagonalize each $A_{\frakn, \omega}$ (again, simultaneously in $\frakn$)
into three blocks $A_{\frakn,\omega}^\new$, $A_{\frakn, \omega}^\old$,
and $A_{\frakn,\omega}^{\textrm{eis}}$
which act by $T_\frakn$ on $S_\bfk^\new(\calO, \Omega, \omega)$,
$S_\bfk^\old(\calO, \Omega, \omega)$ and $E_\bfk(\calO, \Omega, \omega)$ respectively.  This yields a decomposition
of $\Theta_\omega$ into three blocks $\Theta_\omega^\new$, $\Theta_\omega^\old$ and $\Theta_\omega^{\textrm{eis}}$.
Define $\Theta_{\bfk + \two}^\new(\calO, \Omega, \omega)$ to be the linear span of the entries of $\Theta_\omega^\new$.

We define an $\mathcal H^S$-action on $\Theta_{\bfk + \two}(\calO, \Omega, \omega)$ as follows.
Consider the decomposition \eqref{eq:Mk-decomp}.  We take
a basis $\Phi$ of $\C^{h \kappa}$ consisting of $\dim \pi_\bfk^\Omega$ forms
for each $\pi$ in this decomposition and vectors spanning $\ker \Xi$,
with $\Xi$ as in \cref{sec:brandt}.  Block diagonalizing with
respect to $\Phi$ gives a decomposition
$\Theta_{\bfk+\two}(\calO, \Omega, \omega) = \bigoplus \Theta_\pi$ 
where $\Theta_\pi$ is generated by at most $\dim \pi_\bfk^\Omega$ 
theta series whose normalized Fourier coefficients are the Hecke eigenvalues
of any nonzero $\phi \in \pi$ away from $\frakN$.  For $\frakn$ coprime to $\frakN$,
we let $T_\frakn$ act on $\Theta_\pi$ by the corresponding unramified Hecke eigenvalue
for $\pi$.  Extend this action linearly to $\Theta_{\bfk+\two}(\calO, \Omega, \omega)$.

\begin{prop} \label{qmf-to-theta}
There are $\mathcal H^S$-module epimorphisms
$M_\bfk(\calO, \Omega, \omega) \to \Theta_{\bfk+\two}(\calO, \Omega, \omega)$
and
$S_\bfk^\new(\calO, \Omega, \omega) \to \Theta_{\bfk+\two}^\new(\calO, \Omega,
\omega)$.  The latter map is an isomorphism if $\calO$ is of unramified
quadratic type.
\end{prop}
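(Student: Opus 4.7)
The plan is to read off both maps directly from the block decomposition of the Brandt matrix series $\Theta$ introduced just above the proposition. Simultaneously block-diagonalizing the $A_\frakn$ with respect to a basis of $\C^{h\kappa}$ adapted to the decomposition $M_\bfk(\calO, \Omega, \omega) = \bigoplus_\pi \pi_\bfk^\Omega$ yields $\Theta_\omega = \bigoplus_\pi \Theta_\pi$. By construction each $\Theta_\pi$ is spanned by at most $\dim \pi_\bfk^\Omega$ theta series whose unramified Fourier coefficients are the eigenvalues of $\phi \in \pi_\bfk^\Omega$ under $T_\frakn$ for $\frakn$ coprime to $\frakN$. Sending $\phi$ to the corresponding linear combination of entries of $\Theta_\pi$ gives a surjection $\pi_\bfk^\Omega \twoheadrightarrow \Theta_\pi$; summing over all $\pi$ produces the first epimorphism, and restricting to cuspidal $\pi$ with $c(\pi) = \frakN$ produces the second, landing in $\Theta_{\bfk+\two}^\new(\calO, \Omega, \omega)$ by the definition of the latter via the block diagonalization of $\Theta_\omega$. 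The Hecke-equivariance for $\mathcal H^S$ is then automatic from how the action on the target was set up.

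The main technical input is that each entry of $\Theta_\omega$ actually lies in $M_{\bfk+\two}(\frakN, \omega)$. Classically this is verified by direct computation with theta transformation laws, exploiting harmonicity of $\Sym^\bfk$ with respect to the reduced norm; it was carried out by Eichler \cite{eichler:1977} for maximal orders and by Hijikata--Pizer--Shemanske for special orders over $\Q$, but the present generality (arbitrary totally real $F$, special orders, nontrivial character) requires an extension. A cleaner route, which I would prefer, is to realize $\Theta_\omega$ as a specialization of the adelic theta kernel for the dual pair $(B^\times, \GL_2)$: the Weil representation applied to a Schwartz--Bruhat function built from the characteristic function of $\hat\calO$ weighted by $\Omega^{-1}$, together with the harmonic polynomial realizing $\Sym^\bfk$ at infinity, yields automorphic forms on $\GL_2(\A)$ whose Fourier expansions are precisely the entries of $\Theta_\omega$; the correct level $\frakN$ and central character $\omega$ then follow from a local computation of the relevant theta lifts at each place, using the explicit $\Omega$ constructed in \cref{sec:qmfs}.

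For the isomorphism claim, observe that when $\calO$ is of unramified quadratic type, $v_\frakp(\frakN)$ is odd for every $\frakp | \frakD$. Combining \cref{thm:34}(ii) at primes $\frakp | \frakD$ with standard local newform theory for $\GL_2(F_\frakp)$ at primes $\frakp \nmid \frakD$ yields $\dim \pi_\bfk^\Omega = 1$ for every $\pi$ appearing in the decomposition \eqref{eq:Sk-decomp} of $S_\bfk^\new(\calO, \Omega, \omega)$. The second epimorphism is therefore an isomorphism if and only if $\Theta_\pi \neq 0$ for every such $\pi$. This non-vanishing is the global non-vanishing of the theta lift from $\pi$ to $\pi' = \JL(\pi)$, which follows from Shimizu's theorem: by \cref{prop:JL-prim} the representation $\pi'$ is a cuspidal automorphic representation of $\GL_2(\A)$ with matching unramified Hecke data, and the explicit choice of Schwartz--Bruhat function above is designed to produce the $\Omega$-equivariant test vector realizing the lift. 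The hardest step, in my view, will be the modularity of the theta entries at the ramified places (including verifying that the twist by $\Omega^{-1}$ in the Schwartz function yields the correct $\GL_2$-character $\omega$), and pinning down the local matching tightly enough that non-vanishing is unambiguous in the unramified quadratic type case.
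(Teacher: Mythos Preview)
Your first paragraph is exactly right and is essentially the paper's entire proof: the decomposition $\Theta_{\bfk+\two}(\calO,\Omega,\omega)=\bigoplus_\pi\Theta_\pi$ gives the first epimorphism immediately, and the restriction to cuspidal $\pi$ with $c(\pi)=\frakN$ gives the second.

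The second and third paragraphs, however, are doing work that is not required for this proposition and that the paper deliberately postpones.  At this point $\Theta_{\bfk+\two}(\calO,\Omega,\omega)$ is only an abstract vector space of Fourier series equipped with an $\mathcal H^S$-action defined \emph{by fiat} via the block decomposition.  Nothing in the statement asserts that these series lie in $M_{\bfk+\two}(\frakN,\omega)$; that is the content of \cref{theta-thm1} and \cref{theta-thm2}, proved afterwards by a completely different (and non-theta-kernel) argument.  So your Weil-representation detour, while a legitimate alternative route to the later theorems, is irrelevant here.

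For the isomorphism claim you correctly observe that $\dim\pi_\bfk^\Omega=1$ for each $\pi$ in the new space when $\calO$ is of unramified quadratic type, but you then invoke Shimizu-type non-vanishing of the global theta lift to conclude $\Theta_\pi\ne 0$.  This is unnecessary: after block-diagonalizing, the $\pi$-block of $A_\frako=T_\frako\circ\Xi$ is the identity on $\pi_\bfk^\Omega$, so the single entry of $\Theta_\pi$ has $\frako$-th Fourier coefficient equal to $1$.  Hence $\dim\Theta_\pi=1$ trivially, and the epimorphism is an isomorphism.  The paper's proof is accordingly one sentence long.
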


\begin{proof} The first statement is clear from the above decomposition
 $\Theta_{\bfk+\two}(\calO, \Omega, \omega) = \bigoplus \Theta_\pi$. 
 The second statement follows as $\dim \Theta_\pi = \dim \pi_\bfk^\Omega = 1$
 for each such $\pi$ when $\calO$ is of unramified quadratic type.
\end{proof}

\begin{thm} \label{theta-thm1}
The full theta space $\Theta_{\bfk+\two}(\calO, \Omega, \omega)$
embeds as an $\mathcal H^S$-submodule of $M_{\bfk+\two}(\frakN, \omega)$.
In particular, for every $\theta \in \Theta_{\bfk+\two}(\calO, \Omega, \omega)$,
there exists $f \in M_{\bfk+\two}(\frakN, \omega)$ such that
their (nonzero) Fourier coefficients agree away from $\frakN$.
\end{thm}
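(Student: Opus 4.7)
The plan is to argue component by component using the decomposition $\Theta_{\bfk+\two}(\calO,\Omega,\omega) = \bigoplus_{\pi} \Theta_\pi$ set up just before the theorem, producing an $\mathcal H^S$-equivariant injection $\Theta_\pi \hookrightarrow M_{\bfk+\two}(\frakN,\omega)$ for each $\pi$ and then invoking strong multiplicity one for $\GL(2)$ to see that the sum of these injections remains direct in the target space. Once the $\mathcal H^S$-embedding is in hand, the last sentence of the theorem is automatic: matching of unramified Hecke eigensystems forces $C(\frakn,\theta) = C(\frakn,f)$ for all $\frakn$ coprime to $\frakN$, since the Fourier coefficients of a Hecke eigenform at such $\frakn$ are determined multiplicatively by the eigenvalues.

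For each $\pi$ with $\dim \pi > 1$, the global Jacquet--Langlands correspondence produces a cuspidal $\pi' = \JL(\pi)$ of $\GL_2(\A)$ with central character $\omega$, archimedean type the holomorphic discrete series of weight $\bfk+\two$, and $\pi'_v \simeq \pi_v$ at all $v \nmid \frakD$. The local conductor analysis of \cref{thm:34} and \cref{thm:35}, combined with the standard fact $c(\pi'_\frakp) \le r_\frakp$ whenever $\pi_\frakp^{\Omega_\frakp} \neq 0$ for split $B_\frakp$ (Casselman), yields $c(\pi') \mid \frakN$. Thus via \eqref{eq:HMF-decomp} the $\pi'$-isotypic piece $(\pi')_{\bfk+\two}^{K_1(\frakN)}$ is a nonzero subspace of $S_{\bfk+\two}(\frakN,\omega)$ carrying the same unramified Hecke eigensystem as $\Theta_\pi$. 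Sending a basis of $\Theta_\pi$ to any fixed basis of a dimension-matched subspace of this $\pi'$-isotypic piece (normalized so the leading nonzero Fourier coefficient agrees) gives the desired $\mathcal H^S$-equivariant injection; dimension matching is possible because $\dim \Theta_\pi \le \dim \pi_\bfk^\Omega$, and a standard count of oldforms of level $\frakN$ attached to $\pi'$ at each $\frakp \mid \frakN$ shows $\dim (\pi')^{K_1(\frakN)}_{\bfk+\two}$ dominates this quantity.

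For each one-dimensional $\pi = \mu \circ N$, necessarily $\bfk = \zero$ and $\mu^2 = \omega$; \cref{prop:Eis-corr} then exhibits the Eisenstein series $E_{\two,\frakN_1\frakN_2,\frakM}(\mu) \in M_\two(\frakN,\omega) = M_{\bfk+\two}(\frakN,\omega)$ with matching unramified Hecke eigenvalues, into whose span we embed $\Theta_\pi$. Taking the direct sum of all component embeddings, and using that distinct $\pi$'s yield distinct unramified Hecke eigensystems (strong multiplicity one for $\GL_2$, plus the injectivity of Jacquet--Langlands on the cuspidal side and the explicit description of Eisenstein packets on the residual side) shows that the assembled map is an injection of $\mathcal H^S$-modules into $M_{\bfk+\two}(\frakN,\omega)$.

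The main obstacle is that this representation-theoretic argument identifies $\theta \in \Theta_\pi$ with the corresponding modular form only through its unramified Hecke eigensystem, not through a canonical adelic equality of Fourier expansions. At the ramified primes $\frakp \mid \frakN$, the local Hecke operators on $M_\bfk(\calO,\Omega)$ and the standard ones on $W(\frakN)$ genuinely differ (as the author already flagged in the discussion preceding \cref{prop:hecke-comp2}), so the method cannot pin down the Fourier coefficients $C(\frakp^m,\theta)$ at $\frakp \mid \frakN$; hence the statement is only that nonzero Fourier coefficients agree away from $\frakN$, and the embedding lands in $M_{\bfk+\two}(\frakN,\omega)$ rather than being identified with a prescribed subspace. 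Making the injection on each $\Theta_\pi$ precise (in particular handling $\pi$ with $\dim \pi_\bfk^\Omega = 2$, which occurs in \cref{thm:35}(ii)) requires checking that the oldform multiplicity of $\pi'$ at $\frakp \mid \frakN$ is at least as large as $\dim \pi_\bfk^\Omega$ at every bad prime.
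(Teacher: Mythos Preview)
Your overall strategy coincides with the paper's: decompose $\Theta_{\bfk+\two}(\calO,\Omega,\omega) = \bigoplus_\pi \Theta_\pi$, match each $\Theta_\pi$ with the corresponding Hecke eigensystem inside $M_{\bfk+\two}(\frakN,\omega)$ via Jacquet--Langlands (cuspidal case) or \cref{prop:Eis-corr} (Eisenstein case), and then verify a dimension inequality so that the map is injective on each piece.

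However, your dimension argument has a genuine gap. You assert that a ``standard count of oldforms'' gives $\dim (\pi')^{K_1(\frakN)}_{\bfk+\two} \ge \dim \pi_\bfk^\Omega$, but this inequality is \emph{false} at primes $\frakp \mid \frakN_2$ where $\dim \pi_\frakp > 1$ and $c(\pi_\frakp) = v_\frakp(\frakN)$. There \cref{thm:35}(ii) gives $\dim \pi_\frakp^{\Omega_\frakp} = 2$, while the local oldform multiplicity on the $\GL_2$ side is $v_\frakp(\frakN) - c(\pi'_\frakp) + 1 = 1$. So in general $\dim \pi_\bfk^\Omega$ exceeds $d_\pi := \dim (\pi')^{K_1(\frakN)}_{\bfk+\two}$ by a factor of $2^j$, where $j$ is the number of such primes. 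The check you flag in your last sentence therefore fails, and the argument as written does not close.

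The paper's proof fills exactly this gap by showing that the extra dimensions in $\pi_\bfk^\Omega$ collapse already in $\Theta_\pi$: if $\phi, \phi' \in \pi_\bfk^\Omega$ are factorizable with $\phi_v = \phi'_v$ for all $v \ne \frakp$ (for such a $\frakp$), then by \cref{prop:hecke-comp1}(iii) every $T_{\frakp^m}$ annihilates both, so all Hecke operators $T_\frakn$ act by scalar matrices on $\langle \phi, \phi' \rangle$, whence $\phi$ and $\phi'$ contribute only a one-dimensional space to $\Theta_\pi$. Thus the correct inequality to prove is $\dim \Theta_\pi \le d_\pi$, not $\dim \pi_\bfk^\Omega \le d_\pi$, and establishing it requires this additional ramified Hecke computation at primes of ramified quadratic type.
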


\begin{proof}
Consider the decomposition $\Theta_{\bfk+\two}(\calO, \Omega, \omega) = \bigoplus \Theta_\pi$
as above.  For $\pi$ appearing in this decomposition, 
there exists an eigenform $f_\pi \in M_{\bfk + \two}(\frakN, \omega)$ whose
Hecke eigenvalues agree with those of any nonzero $\phi \in \pi$ outside of
$\frakN$.  This follows from \cref{JL-thm-new}(i) when $\dim \pi > 1$
and \cref{prop:eis-JL} when $\dim \pi = 1$.
Let $\frakN'$ be the exact level of $f_\pi$, i.e., the conductor of the
associated automorphic representation of $\GL_2(\A)$.
The number  of linearly independent $f_\pi$ with this property is
$d_\pi := \prod(v_\frakp(\frakN) - v_\frakp(\frakN') + 1)$.

To get our embedding, it suffices to show that $\dim \Theta_\pi \le
d_\pi$.  By \cref{lem32} and \eqref{eq:AL-decomp2},
 $\dim \Theta_\pi \le \dim \pi_\bfk^\Omega \le 2^j \prod_{\frakp | \frakM}
 (v_\frakp(\frakN) - v_\frakp(\frakN')+1)$, where $j$ is the number
 of primes $\frakp | \frakN_2$ such that $\dim \pi_\frakp > 1$.
 Hence it suffices to remove a factor of 2 in this latter bound for each 
 $\frakp | \frakN_2$ such that $\dim \pi_\frakp > 1$ and
 $v_\frakp(\frakN) = v_\frakp(\frakN')$.
Consider such a $\frakp$, and suppose $\phi, \phi' \in \pi_\bfk^\Omega$ are
linearly independent factorizable functions such that 
$\phi_v = \phi'_v$ for $v \ne \frakp$.  
By assuming $\phi, \phi'$ lie in our basis $\Phi$, we see they will only
contribute at most a 1-dimensional space to $\Theta_\pi$
if all Hecke operators $T_\frakn$ act by scalar matrices on $\langle \phi, \phi' \rangle$.  
This is obvious if $\frakp \nmid \frakn$, so it remains to show this for 
each $T_{\frakp^m}$.  But $T_{\frakp^m}$ kills both $\phi$ and $\phi'$
by \cref{prop:hecke-comp1}(iii).
\end{proof}

\begin{rem} \label{rem:theta-ker}
Note the last paragraph of the proof describes the kernel of
the (non-canonical) map $S_\bfk^\new(\calO, \Omega, \omega) \to \Theta_{\bfk+\two}^\new(\calO, \Omega, \omega)$ from \cref{qmf-to-theta}.
(The kernel is canonical up to isomorphism as an $\mathcal H^S$-module.)
  Namely, its dimension is $\sum (2^{s(\pi)} - 1)$ where $\pi$ runs over
representations occurring in $S_\bfk^\new(\calO, \Omega, \omega)$ 
and $s(\pi)$ is the number of primes $\frakp | \frakN_2$ for which 
$\dim \pi_\frakp > 1$.
\end{rem}

We have not shown that the Fourier coefficients of our theta series
for $\frakn$ not prime to $\frakN$ are the Fourier coefficients of a 
corresponding form in $M_{\bfk+\two}(\frakN, \omega)$, however
we can conclude this for the new cuspidal theta series:

\begin{thm} \label{theta-thm2}
We have $\Theta_{\bfk + \two}^\new(\calO, \Omega, \omega) \subset
S_{\bfk + \two}^\new(\frakN, \omega)$.
\end{thm}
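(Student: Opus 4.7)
The plan is to decompose $\Theta^\new_{\bfk+\two}(\calO, \Omega, \omega) = \bigoplus_\pi \Theta^\new_\pi$ according to the spectral decomposition of $S^\new_\bfk(\calO, \Omega, \omega)$, and show that each component $\Theta^\new_\pi$ consists of scalar multiples of the normalized newform $f_\pi \in S^\new_{\bfk+\two}(\frakN, \omega)$ corresponding to $\pi' = \JL(\pi)$. By \cref{prop:quat-AL-decomp}(i), each $\pi$ occurring in $S^\new_\bfk(\calO, \Omega, \omega)$ satisfies $c(\pi) = \frakN$, so $\pi'$ is cuspidal of conductor exactly $\frakN$, yielding a well-defined $f_\pi$.

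First I would observe that each entry $\theta$ of the $\pi$-block of $\Theta$ has its $\frakn$-th normalized Fourier coefficient given by a fixed scalar times the Hecke eigenvalue $\lambda_\frakn(\pi)$. This is immediate from the identity $A_\frakn = T_\frakn \circ \Xi$ together with the fact that $\pi^\Omega_\bfk$ is a joint eigenspace for all $T_\frakn$ with eigenvalue $\lambda_\frakn(\pi)$. Since $f_\pi$ has $\frakn$-th normalized Fourier coefficient $\lambda_\frakn(\pi')$, it suffices to verify $\lambda_\frakn(\pi) = \lambda_\frakn(\pi')$ for every integral ideal $\frakn$.

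By multiplicativity of the Hecke operators via \eqref{eq:hecke-fact}, this reduces to prime powers $\frakp^m$. For $\frakp \nmid \frakN$, equality is automatic since $\pi_\frakp \simeq \pi'_\frakp$. For $\frakp \mid \frakM$, \cref{prop:hecke-comp2} identifies the quaternionic operator $T_{\frakp^m}$ on $\pi^\Omega_\bfk$ with the standard $\GL_2(F_\frakp)$ double coset $\hat\calO^\times \bmx \varpi_\frakp^m & \\ & 1 \emx \hat\calO^\times$ acting on the newform $f_\pi$, forcing equal eigenvalues. For $\frakp \mid \frakD$, I would compare the cases of \cref{prop:hecke-comp1} against the known $U_\frakp$-eigenvalue of $f_\pi$: both sides vanish when $\omega_\frakp$ is ramified (since $\pi'_\frakp$ is then a supercuspidal or a ramified-twist of Steinberg, of conductor $\geq 2$), and likewise both vanish when $c(\pi_\frakp) \geq 2$ for the same conductor reason on the $\GL_2$ side; finally, when $n_\frakp = 1$ and $\pi_\frakp = \mu_\frakp \circ N$ with $\mu_\frakp$ unramified, we have $\pi'_\frakp = \St \otimes \mu_\frakp$ and both eigenvalues equal $\mu_\frakp(\varpi_\frakp)^m$.

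Having matched Fourier coefficients at every $\frakn$, each $\theta \in \Theta^\new_\pi$ coincides as a Fourier series with a scalar multiple of the modular form $f_\pi$, so $\Theta^\new_\pi \subset \C \cdot f_\pi \subset S^\new_{\bfk+\two}(\frakN, \omega)$, and summing over $\pi$ completes the proof. The main obstacle is the ramified prime analysis at $\frakp \mid \frakD$: the quaternionic Hecke operators are defined rather differently from their $\GL_2$ counterparts at these primes, and the matching genuinely depends on the representation-theoretic input of \cref{prop:hecke-comp1} combined with the observation that the local Jacquet--Langlands transfer $\pi'_\frakp$ is supercuspidal precisely when $\dim \pi_\frakp > 1$.
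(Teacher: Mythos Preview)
Your proposal is correct and follows essentially the same approach as the paper's own proof: decompose $\Theta^\new_\omega$ into blocks $\Theta_\pi$ indexed by the representations appearing in $S^\new_\bfk(\calO,\Omega,\omega)$, reduce to checking that $T_{\frakp^m}$ acts on $\pi_\bfk^\Omega$ by the scalar $C(\frakp^m, f_\pi)$ for every $\frakp$ and $m$, and then invoke \cref{prop:hecke-comp2} for $\frakp \mid \frakM$ and \cref{prop:hecke-comp1} together with the standard $\GL_2$ newform computations for $\frakp \mid \frakD$. The paper's proof is terser but structurally identical; your version simply makes the case analysis at $\frakp \mid \frakD$ more explicit.
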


\begin{proof} 
Consider $\pi$ in  the decomposition of
$S_\bfk^\new(\calO,\Omega,\omega)$ from \cref{prop:quat-AL-decomp}(i).
For each $\pi$, we can associate an $f_\pi \in S^\new_{\bfk + \two}(\frakN, \omega)$ 
as in the previous proof, which now is unique up to scalars.

Using a basis of $S_\bfk^\new(\calO,\Omega,\omega)$ composed
of bases for each $\pi$ as above, 
we can block diagonalize $\Theta_\omega^\new$ with one block for
$\Theta_\pi$ for each $\pi$.  
Then it suffices to show each $T_{\frakp^m}$ acts on $\pi_\bfk^\Omega$ by $C(\frakp^m, f_\pi)$
for each $\pi$, $\frakp$ and $m$.  This is clear for $\frakp \nmid \frakN$.
For $\frakp | \mathfrak D$,
this follows from \cref{prop:hecke-comp1} and the standard computations
of Hecke operators on local newforms of $\GL_2(F_\frakp)$.  
For $\frakp | \frakM$, this follows from \cref{prop:hecke-comp2}.
\end{proof} 

This proves that the map $S_\bfk^\new(\calO, \Omega, \omega) \to \Theta_{\bfk+\two}^\new(\calO, \Omega, \omega)$ from \cref{qmf-to-theta} is a realization
of the map JL in terms of theta series.

Specializing to the original case of trivial nebentypus, we get the following
solution to the basis problem.  
Note that for any level $\frakN$, we may choose
our quaternion algebra $B$ and special order $\calO$ such that there
is at most one prime dividing $\frakN_2$.

For a space $S$ of modular forms, $S^\psi$ denotes the space of twists by $\psi$.

\begin{cor} \label{cor:basis}
If $\calO$ is unramified quadratic type, i.e., $\frakN_2 = \frako$ then
\[ S_{\bfk + \two}^\new(\frakN, 1) = \Theta_{\bfk + \two}^\new(\calO, 1, 1). \]
If $\frakN_2 = \frakp^{2e}$, then 
\[ S_{\bfk + \two}^\new(\frakN, 1) \subset \Theta_{\bfk + \two}^\new(\calO, 1, 1)
 + \sum_\psi \sum_{j=1}^{e}  \Theta_{\bfk + \two}^\new(\calO(\frakN_1, \frakp^j, \frakM), \Omega_\psi, \psi^{-2})^\psi
+ \sum_\psi S_{\bfk + \two}^\new(\frakN_1 \frakM, \psi^{-2})^\psi, \]
where $\psi$ runs over primitive characters of conductor $\frakp^e$,
$\calO(\frakN_1, \frakp^j, \frakM)$ denotes a special order of level $\frakN_1 \frakp^j
\frakM$ in $B$ locally isomorphic to $\calO$ at all places away from $\frakp$, which is
of unramified quadratic type at $\frakp$ (and thus globally) if $j$ is odd, and 
$\Omega_\psi$ is a character
for this order extending $\psi^{-2}$ as in \cref{sec:qmfs}.
\end{cor}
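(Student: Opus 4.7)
For Part 1, when $\frakN_2 = \frako$, I would compose three results. The surjection $S^\new_\bfk(\calO, 1, 1) \to \Theta^\new_{\bfk+\two}(\calO, 1, 1)$ of \cref{qmf-to-theta} is an isomorphism in this case (since $\calO$ is of unramified quadratic type), and chaining it with the inclusion $\Theta^\new_{\bfk+\two}(\calO, 1, 1) \subseteq S^\new_{\bfk+\two}(\frakN, 1)$ of \cref{theta-thm2} gives an embedding. Meanwhile, \cref{JL-thm-new}(i) with $\frakN_2 = \frako$ kills the remainder $R^\new$ and identifies $S^\new_\bfk(\calO, 1, 1) \cong S^{[\frakN_1;\frako;\frakM]}_{\bfk+\two}(\frakN, 1)$. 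The remaining observation is that this primitivity subspace is the full newspace: at each $\frakp \mid \frakN_1 = \frakD$ the local conductor $v_\frakp(\frakN)$ is odd, while a principal series $\pi(\chi_1, \chi_1^{-1})$ with trivial central character has even conductor $2c(\chi_1)$, so $\pi_\frakp$ must be discrete series. Hence $\dim \Theta^\new_{\bfk+\two}(\calO, 1, 1) = \dim S^\new_{\bfk+\two}(\frakN, 1)$, and the embedding is an equality.

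For Part 2, I analyze $f \in S^\new_{\bfk+\two}(\frakN, 1)$ according to its local representation $\pi_\frakp$ (of conductor $2e$ and trivial central character), in three cases corresponding to the three sums. If $\pi_\frakp$ is minimal of dimension greater than one---necessarily supercuspidal, as any minimal twist of Steinberg has conductor $1$---then \cref{JL-thm-new}(i) places $\pi$ inside $S^\new_\bfk(\calO, 1, 1)$ and the Part 1 strategy yields $f \in \Theta^\new_{\bfk+\two}(\calO, 1, 1)$. If $\pi_\frakp$ is discrete series but non-minimal (including $1$-dimensional on the $B_\frakp^\times$ side, i.e., $\pi'_\frakp$ a ramified quadratic twist of Steinberg), write $\pi_\frakp \cong \tau_\frakp \otimes \mu_\frakp$ with $\tau_\frakp$ minimal and $c(\mu_\frakp) = e$; using Gr\"unwald--Wang, lift $\mu_\frakp$ to a finite-order idele class character $\psi$ unramified outside $\frakp$, so that $g := f \otimes \psi^{-1} \in S^\new_{\bfk+\two}(\frakN_1 \frakp^j \frakM, \psi^{-2})$ with $g_\frakp \cong \tau_\frakp$ of conductor $j = c(\tau_\frakp)$. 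Applying the Part 1 argument to $\calO(\frakN_1, \frakp^j, \frakM)$ with character $\Omega_\psi$ (a minimally ramified extension of $\psi^{-2}$) gives $g \in \Theta^\new_{\bfk+\two}(\calO(\frakN_1, \frakp^j, \frakM), \Omega_\psi, \psi^{-2})$, whence $f$ lies in the second sum. Finally, if $\pi_\frakp$ is principal series $\pi(\chi_1, \chi_1^{-1})$---a situation genuinely outside the image of any Jacquet--Langlands transfer from $B_\frakp^\times$---take $\psi_\frakp = \chi_1$; then $g := f \otimes \psi^{-1}$ has $\frakp$-component $\pi(1, \chi_1^{-2})$, which is unramified precisely when $\chi_1$ is quadratic, placing $g \in S^\new_{\bfk+\two}(\frakN_1 \frakM, \psi^{-2})$ and $f$ in the third sum.

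The subtlest step is pinning down the range $1 \leq j \leq e$ in the second sum: existence of $\Omega_\psi$ demands $c(\psi^{-2}_\frakp) \leq j/2$, and combining this with the non-minimality bound $j < 2e$ and the relation between $c(\mu)$ and $c(\mu^2)$ gives the range cleanly for $p$ odd (where squaring is bijective on ramified characters), but requires more delicate analysis for $p = 2$, exactly the regime where non-minimal discrete series with trivial central character truly proliferate. Two secondary technical matters also need care: the class-field-theoretic construction of a global $\psi$ with prescribed local behavior at $\frakp$ (possibly absorbing auxiliary ramification elsewhere into a further twist), and the verification that these three cases exhaust $S^\new_{\bfk+\two}(\frakN, 1)$, which relies on the standard classification of local admissible representations of conductor $2e$ with trivial central character into supercuspidal, special, and principal series types.
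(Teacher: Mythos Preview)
Your proposal is correct and follows essentially the same route as the paper's proof. Both arguments split into the same case analysis: for $\frakN_2 = \frako$ you observe (as the paper does) that trivial central character together with odd local conductor at each $\frakp \mid \frakN_1$ forces $\pi_\frakp$ to be discrete series, so the $\frakD$-primitive subspace coincides with the full new space; and for $\frakN_2 = \frakp^{2e}$ you treat the three local types at $\frakp$ (minimal supercuspidal, non-minimal discrete series, principal series) exactly as the paper does, twisting by a globalization $\psi$ of a local character of conductor $e$ to land in the appropriate summand. A minor stylistic difference is that for Part 1 you argue by dimension count (chaining \cref{qmf-to-theta}, \cref{theta-thm2}, and \cref{JL-thm-new}) whereas the paper argues directly that every newform lies in the theta space; these are equivalent.

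The technical concerns you flag at the end---the range $1 \le j \le e$, the globalization of $\psi$, and the exhaustion by three cases---are treated with the same brevity in the paper's proof. In particular, the paper simply asserts ``let $\psi^{-1}$ be a finite-order globalization of $\psi_\frakp^{-1}$ unramified at all other finite places'' without further comment (this is standard and does not require Grunwald--Wang), and the paper does not explicitly verify $j \le e$ either. Your observation that the existence of $\Omega_\psi$ forces $c(\psi_\frakp^{-2}) \le j/2$, which for $p$ odd constrains the non-minimal discrete series case to $e=1$, $j=1$ (via the ramified quadratic twist of Steinberg), is a correct sharpening of what the paper leaves implicit. One small notational slip: you write $\frakN_1 = \frakD$, but only the sets of prime divisors coincide; and your use of ``dimension greater than one'' for $\pi_\frakp$ momentarily conflates the $B_\frakp^\times$ and $\GL_2(F_\frakp)$ sides, though the intent is clear.
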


\begin{proof}
Since we have trivial central character, any cuspidal $\pi$ of conductor $\frakN$
must be discrete series at $\frakp$ when $v_\frakp(\frakN)$ is odd.
Thus, by \cref{JL-thm-new}, $\Theta_{\bfk + \two}^\new(\calO, 1, 1)$
contains all newforms $f \in S_{\bfk + \two}^\new(\frakN, 1)$
except possibly those corresponding to automorphic representations $\pi$
which are not minimal at $\frakp | \frakN_2$.   In particular, the first statement
follows.

Now assume $\frakN_2 = \frakp^{2e}$ and $\pi_\frakp$ is not minimal.  
Then there exists a local character $\psi_\frakp$ such that $c(\psi_\frakp) 
= e$ and $\pi_\frakp \otimes \psi_\frakp^{-1}$
minimal.  Let $\psi^{-1}$ be a finite-order globalization of 
$\psi_\frakp^{-1}$ unramified at all other finite places.
If $\pi_\frakp$ is a (ramified) principal series, then
$c(\psi_\frakp) =  \frac{v_\frakp(\frakN_2)}2$ and 
$\pi_\frakp \otimes \psi_\frakp^{-1}$ is unramified.  Otherwise,
$j = c(\pi_\frakp \otimes \psi_\frakp^{-1}) \ge 1$, and we can replace 
$\calO_\frakp$ by a special order $\calO'_\frakp$ of level $\frakp^j$ 
(of unramified quadratic type if $j$ is odd) to pick up the twist
$\pi \otimes \psi^{-1}$ in a suitable $S^\new_\bfk(\calO', \Omega', \psi^{-2})$.
\end{proof}

The proof of this, without assuming anything on $\frakN_2$, tells us the effectively
weaker result that $S_{\bfk + \two}^\new(\frakN, 1)$ is generated by 
$\Theta_{\bfk + \two}^\new(\calO, 1, 1)$ and twists of Hilbert newforms
of smaller levels at some primes dividing $\frakN_2$.  This is a more precise
version of \cref{thm:intro}, which we think of as a ``weak solution'' to the
basis problem.

Moreover, by varying the quaternion algebra $B$, we see that we will
get every newform in $S_{\bfk + \two}^\new(\frakN, 1)$ via some quaternionic
theta series unless $[F:\Q]$ is odd (so we cannot take $B$ to be unramified
at all finite places) and the associated representation $\pi$ is a principal
series at all ramified places, i.e., $\pi$ is a twist of an unramified representation.
This proves \cref{cor:intro}, our solution to the basis problem for trivial
nebentypus.

The same argument can be applied to modular forms with character,
however now even a weak solution to the basis problem is not as clean
in general: the space
$S_{\bfk + \two}^\new(\frakN, \psi)$ is generated by 
$\Theta_{\bfk + \two}^\new(\calO, \Omega, \psi)$ together with Hilbert
newforms of level $\frakN$ which are minimal ramified principal series
at some primes dividing $\frakN_1 \frakN_2$ and twists of Hilbert newforms of
smaller levels at some primes dividing $\frakN_2$.  Namely, if 
$\frakp | \frakN_1 \frakN_2$ such that $c_\frakp(\psi) = v_\frakp(\frakN)$,
then there is a minimal ramified principal series, $\pi(1,\psi_\frakp)$ in
standard notation, with local conductor $v_\frakp(\frakN)$.  Hence
one will not get all newforms in general spaces $S_{\bfk + \two}^\new(\frakN, \psi)$
by theta series attached to $\calO$ and twists of newforms of smaller level, rather only when $c_\frakp(\psi) < v_\frakp(\frakN)$ for all $\frakp | \frakN_1 \frakN_2$.

However, by choosing $B$ to be ramified at at most one finite prime, and
taking appropriate $\calO$ and $\Omega$, we see that we
can generate $S_{\bfk + \two}^\new(\frakN, \psi)$ by quaternionic theta
series and twist of newforms of smaller level if $[F:\Q]$ is even (where no twists
are needed) or if
$c(\psi) \ne \frakN$.  Note the exception
 $c(\psi) \ne \frakN$ is the analogue of the classical situation with 
 $F=\Q$ and $\psi=1$ where $S_k(1)$ is not generated by
theta series.

%
%

\begin{bibdiv}
\begin{biblist}

\bib{atkin-lehner}{article}{
   author={Atkin, A. O. L.},
   author={Lehner, J.},
   title={Hecke operators on $\Gamma _{0}(m)$},
   journal={Math. Ann.},
   volume={185},
   date={1970},
   pages={134--160},
   issn={0025-5831},
}

\bib{BH}{book}{
   author={Bushnell, Colin J.},
   author={Henniart, Guy},
   title={The local Langlands conjecture for $\rm GL(2)$},
   series={Grundlehren der Mathematischen Wissenschaften [Fundamental
   Principles of Mathematical Sciences]},
   volume={335},
   publisher={Springer-Verlag, Berlin},
   date={2006},
   pages={xii+347},
   isbn={978-3-540-31486-8},
   isbn={3-540-31486-5},
   doi={10.1007/3-540-31511-X},
}

\bib{carayol}{article}{
   author={Carayol, H.},
   title={Repr\'esentations cuspidales du groupe lin\'eaire},
   language={French},
   journal={Ann. Sci. \'Ecole Norm. Sup. (4)},
   volume={17},
   date={1984},
   number={2},
   pages={191--225},
   issn={0012-9593},
   review={\MR{760676}},
}

\bib{casselman}{article}{
   author={Casselman, William},
   title={On some results of Atkin and Lehner},
   journal={Math. Ann.},
   volume={201},
   date={1973},
   pages={301--314},
   issn={0025-5831},
}

\bib{DDP}{article}{
   author={Dasgupta, Samit},
   author={Darmon, Henri},
   author={Pollack, Robert},
   title={Hilbert modular forms and the Gross-Stark conjecture},
   journal={Ann. of Math. (2)},
   volume={174},
   date={2011},
   number={1},
   pages={439--484},
   issn={0003-486X},
}

\bib{DV}{article}{
   author={Demb{\'e}l{\'e}, Lassina},
   author={Voight, John},
   title={Explicit methods for Hilbert modular forms},
   conference={
      title={Elliptic curves, Hilbert modular forms and Galois deformations},
   },
   book={
      series={Adv. Courses Math. CRM Barcelona},
      publisher={Birkh\"auser/Springer, Basel},
   },
   date={2013},
   pages={135--198},
}

\bib{duflo-labesse}{article}{
   author={Duflo, Michel},
   author={Labesse, Jean-Pierre},
   title={Sur la formule des traces de Selberg},
   journal={Ann. Sci. \'Ecole Norm. Sup. (4)},
   volume={4},
   date={1971},
   pages={193--284},
   issn={0012-9593},
}

\bib{eichler:1955}{article}{
   author={Eichler, Martin},
   title={\"Uber die Darstellbarkeit von Modulformen durch Thetareihen},
   language={German},
   journal={J. Reine Angew. Math.},
   volume={195},
   date={1955},
   pages={156--171 (1956)},
   issn={0075-4102},
}

\bib{eichler}{article}{
   author={Eichler, M.},
   title={The basis problem for modular forms and the traces of the Hecke
   operators},
   conference={
      title={Modular functions of one variable, I},
      address={Proc. Internat. Summer School, Univ. Antwerp, Antwerp},
      date={1972},
   },
   book={
      publisher={Springer, Berlin},
   },
   date={1973},
   pages={75--151. Lecture Notes in Math., Vol. 320},
}

\bib{eichler:1977}{article}{
   author={Eichler, Martin},
   title={On theta functions of real algebraic number fields},
   journal={Acta Arith.},
   volume={33},
   date={1977},
   number={3},
   pages={269--292},
   issn={0065-1036},
}

\bib{FMP}{article}{
   author={File, Daniel},
   author={Martin, Kimball},
   author={Pitale, Ameya},
   title={Test vectors and central $L$-values for ${\rm GL}(2)$},
   journal={Algebra Number Theory},
   volume={11},
   date={2017},
   number={2},
   pages={253--318},
   issn={1937-0652},
}

\bib{gebhardt}{thesis}{
   author={Gebhardt, Ute},
   title={Explicit construction of spaces of Hilbert modular cusp forms using quaternionic theta series},
   note={Dissertation, Universit\"at des Saarlandes},
   date={2009},
}

\bib{gelbart}{book}{
   author={Gelbart, Stephen S.},
   title={Automorphic forms on ad\`ele groups},
   note={Annals of Mathematics Studies, No. 83},
   publisher={Princeton University Press, Princeton, N.J.; University of
   Tokyo Press, Tokyo},
   date={1975},
   pages={x+267},
}

\bib{gross}{article}{
   author={Gross, Benedict H.},
   title={Local orders, root numbers, and modular curves},
   journal={Amer. J. Math.},
   volume={110},
   date={1988},
   number={6},
   pages={1153--1182},
   issn={0002-9327},
}

\bib{HPS:crelle}{article}{
   author={Hijikata, H.},
   author={Pizer, A.},
   author={Shemanske, T.},
   title={Orders in quaternion algebras},
   journal={J. Reine Angew. Math.},
   volume={394},
   date={1989},
   pages={59--106},
   issn={0075-4102},
}

\bib{HPS:mams}{article}{
   author={Hijikata, Hiroaki},
   author={Pizer, Arnold K.},
   author={Shemanske, Thomas R.},
   title={The basis problem for modular forms on $\Gamma_0(N)$},
   journal={Mem. Amer. Math. Soc.},
   volume={82},
   date={1989},
   number={418},
   pages={vi+159},
   issn={0065-9266},
}

\bib{hijikata-saito}{article}{
   author={Hijikata, Hiroaki},
   author={Saito, Hiroshi},
   title={On the representability of modular forms by theta series},
   conference={
      title={Number theory, algebraic geometry and commutative algebra, in
      honor of Yasuo Akizuki},
   },
   book={
      publisher={Kinokuniya, Tokyo},
   },
   date={1973},
   pages={13--21},
}

\bib{JL}{book}{
   author={Jacquet, H.},
   author={Langlands, R. P.},
   title={Automorphic forms on ${\rm GL}(2)$},
   series={Lecture Notes in Mathematics, Vol. 114},
   publisher={Springer-Verlag, Berlin-New York},
   date={1970},
   pages={vii+548},
}

\bib{me:cong}{article}{
   author={Martin, Kimball},
   title={The Jacquet-Langlands correspondence, Eisenstein congruences, and integral L-values in weight 2},
   journal={Math. Res. Lett.},
  date={2017},
   volume={24},
   number={6},
   pages={1775--1795},
}

\bib{me:cong2}{article}{
   author={Martin, Kimball},
   title={Congruences for modular forms mod 2 and quaternionic $S$-ideal
   classes},
   journal={Canad. J. Math.},
   volume={70},
   date={2018},
   number={5},
   pages={1076--1095},
   issn={0008-414X},
}

\bib{mazur}{article}{
   author={Mazur, B.},
   title={Modular curves and the Eisenstein ideal},
   journal={Inst. Hautes \'Etudes Sci. Publ. Math.},
   number={47},
   date={1977},
   pages={33--186 (1978)},
   issn={0073-8301},
}

\bib{pizer:algorithm}{article}{
   author={Pizer, Arnold},
   title={An algorithm for computing modular forms on $\Gamma _{0}(N)$},
   journal={J. Algebra},
   volume={64},
   date={1980},
   number={2},
   pages={340--390},
   issn={0021-8693},
}

\bib{pizer:p2}{article}{
   author={Pizer, Arnold},
   title={Theta series and modular forms of level $p^{2}M$},
   journal={Compositio Math.},
   volume={40},
   date={1980},
   number={2},
   pages={177--241},
   issn={0010-437X},
}

\bib{ponomarev}{article}{
   author={Ponomarev, Paul},
   title={Newforms of squarefree level and theta series},
   journal={Math. Ann.},
   volume={345},
   date={2009},
   number={1},
   pages={185--193},
   issn={0025-5831},
}

\bib{saito}{article}{
   author={Saito, Hiroshi},
   title={On Tunnell's formula for characters of ${\rm GL}(2)$},
   journal={Compositio Math.},
   volume={85},
   date={1993},
   number={1},
   pages={99--108},
   issn={0010-437X},
}

\bib{shemanske-walling}{article}{
   author={Shemanske, Thomas R.},
   author={Walling, Lynne H.},
   title={Twists of Hilbert modular forms},
   journal={Trans. Amer. Math. Soc.},
   volume={338},
   date={1993},
   number={1},
   pages={375--403},
   issn={0002-9947},
}

\bib{shimizu}{article}{
   author={Shimizu, Hideo},
   title={Theta series and automorphic forms on ${\rm GL}_{2}$},
   journal={J. Math. Soc. Japan},
   volume={24},
   date={1972},
   pages={638--683},
   issn={0025-5645},
}

\bib{shimura}{article}{
   author={Shimura, Goro},
   title={The special values of the zeta functions associated with Hilbert
   modular forms},
   journal={Duke Math. J.},
   volume={45},
   date={1978},
   number={3},
   pages={637--679},
   issn={0012-7094},
}

\bib{shimura:1980}{article}{
   author={Shimura, Goro},
   title={The arithmetic of certain zeta functions and automorphic forms on
   orthogonal groups},
   journal={Ann. of Math. (2)},
   volume={111},
   date={1980},
   number={2},
   pages={313--375},
   issn={0003-486X},
}

\bib{tunnell:llc}{article}{
   author={Tunnell, Jerrold B.},
   title={On the local Langlands conjecture for $GL(2)$},
   journal={Invent. Math.},
   volume={46},
   date={1978},
   number={2},
   pages={179--200},
   issn={0020-9910},
}

\bib{tunnell:eps}{article}{
   author={Tunnell, Jerrold B.},
   title={Local $\epsilon $-factors and characters of ${\rm GL}(2)$},
   journal={Amer. J. Math.},
   volume={105},
   date={1983},
   number={6},
   pages={1277--1307},
   issn={0002-9327},
}

\bib{waldspurger}{article}{
   author={Waldspurger, J.-L.},
   title={Formes quadratiques \`a $4$\ variables et rel\`evement},
   language={French},
   journal={Acta Arith.},
   volume={36},
   date={1980},
   number={4},
   pages={377--405},
   issn={0065-1036},
}

\end{biblist}
\end{bibdiv}

\end{document}